\newtheorem{theorem}{Theorem}[section]
\newtheorem{theoremletter}{Theorem}
\newtheorem{proposition}[theorem]{Proposition}
\newtheorem{corollary}[theorem]{Corollary}
\newtheorem{lemma}[theorem]{Lemma}
\newtheorem{conjecture}[theorem]{Conjecture}
\theoremstyle{definition}
\newtheorem{definition}[theorem]{Definition}
\newtheorem{example}[theorem]{Example}
\newtheorem{question}[theorem]{Question}
\newtheorem{remark}[theorem]{Remark}
\newcommand{\CC}{\mathbb{C} }
\newcommand{\PP}{\mathbb{P} }
\newcommand{\QQ}{\mathbb{Q} }
\newcommand{\RR}{\mathbb{R} }
\newcommand{\ZZ}{\mathbb{Z} }
\newcommand{\cE}{\mathcal{E} }
\newcommand{\cM}{\mathcal{M} }
\newcommand{\cO}{\mathcal{O} }
\newcommand{\cV}{\mathcal{V} }
\newcommand{\rD}{\mathrm{D} }
\newcommand{\rH}{\mathrm{H} }
\newcommand{\rM}{\mathrm{M} }
\newcommand{\rN}{\mathrm{N} }
\newcommand{\ba}{\mathbf{a} }
\newcommand{\bp}{\mathbf{p} }
\newcommand{\bm}{\mathbf{m} }
\newcommand{\bn}{\mathbf{n} }
\newcommand{\bM}{\mathbf{M} }
\def\CParHom{\mathcal{P}ar\mathcal{H}om }
\def\SL{\mathrm{SL}}
\def\Gr{\mathrm{Gr}}
\def\Ext{\mathrm{Ext} }
\def\git{/\!/ }
\def\Pic{\mathrm{Pic} }
\def\rk{\mathrm{rank}\, }
\def\im{\mathrm{im}\, }
\def\proj{\mathrm{Proj}\;}
\def\Eff{\mathrm{Eff}}
\def\pdeg{\mathrm{pardeg}}
\def\Det{\mathrm{Det}}
\begin{document}

\title[Positivity of the Poincar\'e bundle and applications]{Positivity of the Poincar\'e bundle on the moduli space of vector bundles and its applications}
\date{\today}

\author{Kyoung-Seog Lee}
\address{Kyoung-Seog Lee, Institute of the Mathematical Sciences of the Americas, University of Miami, 1365 Memorial Drive, Ungar 515, Coral Gables, FL 33146, USA}
\email{kyoungseog02@gmail.com}

\author{Han-Bom Moon}
\address{Han-Bom Moon, Department of Mathematics, Fordham University, New York, NY 10023, USA}
\email{hmoon8@fordham.edu}

\maketitle

\begin{abstract}
We prove that the normalized Poincar\'e bundle on the moduli space of stable rank $r$ vector bundles with a fixed determinant on a smooth projective curve $X$ induces a family of nef vector bundles on the moduli space. Two applications follow. We show that when the genus of $X$ is large, the derived category of $X$ is embedded into the derived category of the moduli space for arbitrary rank and coprime degree, which extends the results of Narasimhan, Fonarev-Kuznetsov, and Belmans-Mukhopadhyay. As the second application, we construct a family of ACM bundles on the moduli space. A key ingredient of our proof is the investigation of birational geometry of the moduli spaces of parabolic bundles. 
\end{abstract}

\section{Introduction}\label{sec:introduction}

The study of moduli spaces of vector bundles on curves has a long and beautiful history. The moduli spaces have been central objects in many branches of mathematics, for example, algebraic geometry, differential geometry, mathematical physics, number theory, representation theory, topology, to name a few. This paper discusses the positivity of a restriction of the normalized Poincar\'e bundle on the moduli space and explains two consequences in the study of derived category and arithmetically Cohen-Macaulay (ACM) bundles. 

Let $X$ be a connected smooth projective curve of genus $g \ge 2$. Let $r$, $d$ be two positive integers such that $r \ge 2$ and $(r, d) = 1$. We assume that $0 < d < r$. For a line bundle $L$ of degree $d$ on $X$, let $\rM(r, L)$ be the moduli space of rank $r$, determinant $L$ stable vector bundles. It is a smooth Fano variety of dimension $(r^{2}-1)(g-1)$, and $\Pic(\rM(r, L))$ is generated by an ample divisor $\Theta$. Let $\cE$ be a Poincar\'e bundle over $X \times \rM(r, L)$. For each point $x \in X$, the restriction of $\cE$ to $x \times \rM(r, L) \cong \rM(r, L)$ is denoted by $\cE_{x}$. We assume that $\cE$ is normalized, in the sense that $\det(\cE_{x}) \cong \Theta^{\ell}$ where $\ell$ is the integer such that $0 < \ell < r$ and $\ell d \equiv 1 \; \mbox{mod}\; r$. 

We first show that $\cE_{x}$ and its dual are both nearly positive. 

\begin{theoremletter}\label{thm:nefintro}
Two vector bundles $\cE_{x}$ and $\cE_{x}^{*} \otimes \Theta$ are strictly nef. 
\end{theoremletter}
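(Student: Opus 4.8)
The plan is to reduce both assertions to the strict nefness of a single tautological line bundle on a projective bundle, and then to identify that projective bundle with a moduli space of parabolic bundles. Since projectivization is insensitive to twisting by a line bundle, $\cE_{x}$ is strictly nef if and only if $\cO_{\PP(\cE_{x})}(1)$ is strictly nef on $\PP(\cE_{x})$, while $\cE_{x}^{*}\otimes \Theta$ is strictly nef if and only if $\cO_{\PP(\cE_{x}^{*})}(1)\otimes p^{*}\Theta$ is strictly nef on $\PP(\cE_{x}^{*})$, where $p$ denotes the bundle projection. I would identify $\PP(\cE_{x})$ and $\PP(\cE_{x}^{*})$ with moduli spaces $\cP_{\alpha}$ of $\alpha$-stable parabolic bundles on $X$ with a single parabolic point $x$ carrying a one-step flag (a hyperplane, resp. a line, in the fiber) and small weight $\alpha$: for $\alpha$ small, parabolic stability coincides with stability of the underlying bundle, so these parabolic moduli spaces are precisely the two one-step Hecke correspondences fibered over $\rM(r,L)$, each of Picard rank two. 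Under this identification the fiberwise point over $[F]$ records exactly a Hecke modification of $F$ at $x$, and the relevant tautological class becomes an explicit combination of the parabolic theta (determinant-of-cohomology) line bundle and the pullback $p^{*}\Theta$.

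Nefness itself I would extract from the ampleness of the parabolic theta bundle. A line bundle quotient $f^{*}\cE_{x}\twoheadrightarrow Q$ over a nonconstant $f\colon C\to \rM(r,L)$ is the same datum as an elementary modification of the pulled-back Poincar\'e family $\cF$ on $X\times C$ along $\{x\}\times C$, yielding a family $\cF'$ of bundles of determinant $L(-x)$, and comparing the determinant-of-cohomology line bundles of $\cF$ and $\cF'$ through the defining sequence $0\to \cF'\to \cF\to (i_{x})_{*}Q\to 0$ expresses $\deg Q$ as a fixed combination of $\deg f^{*}\Theta$ and the $\Theta$-degree pulled back from $\rM(r,L(-x))$. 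The normalization $\det(\cE_{x})\cong \Theta^{\ell}$ with $\ell d\equiv 1 \pmod r$ is exactly the choice that makes this combination agree, up to a positive scalar, with the parabolic theta line bundle $\Theta_{\alpha}$; since $\Theta_{\alpha}$ is ample on $\cP_{\alpha}$ its pullback has nonnegative degree, giving $\deg Q\ge 0$ and hence nefness of both bundles.

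The substance of the theorem is the strict inequality, and here I would run variation of GIT in the one-dimensional parabolic weight $\alpha\in(0,1)$. The nef cone of $\PP(\cE_{x})$ is two-dimensional; as $\alpha$ varies the spaces $\cP_{\alpha}$ undergo finitely many wall-crossings, and the ample classes $\Theta_{\alpha}$ sweep out the interior arc of the nef cone, its two boundary rays being the limits at the two ends of the chamber containing small $\alpha$. One boundary ray is $p^{*}\Theta$, the extremal contraction $p\colon \PP(\cE_{x})\to \rM(r,L)$ that collapses the lines in the $\PP^{r-1}$-fibers, and on such a line $\cO(1)$ has degree $1>0$; this side is immediate. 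The opposite ray is the parabolic theta class $\Theta_{\alpha_{0}}$ at the first wall $\alpha_{0}$, and the normalization forces $\cO_{\PP(\cE_{x})}(1)$ to lie exactly on this boundary ray, which is why the bundle is strictly nef rather than ample. To conclude it remains to show that no irreducible curve is $\cO(1)$-trivial, i.e. that the numerical class on which $\cO(1)$ vanishes lies on the boundary of the Mori cone but is not represented by an honest curve.

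I expect the main obstacle to be precisely this last point: controlling the birational geometry of $\cP_{\alpha}$ across the first wall. Nefness is formal once ampleness of $\Theta_{\alpha}$ is in hand, but ruling out irreducible $\cO(1)$-trivial curves requires identifying the destabilizing sub- and quotient parabolic bundles at $\alpha_{0}$, describing the exceptional locus of the associated flipping contraction as a family assembled from lower-rank moduli spaces and Hecke data, and computing the $\cO(1)$-degree of every curve contained in it to see that it is strictly positive, with vanishing occurring only in the limit. It is in this analysis that the fixed-determinant hypothesis, the coprimality $(r,d)=1$, and the specific normalization by $\ell$ become essential, since they constrain which strictly semistable loci can appear and guarantee the requisite positivity along the wall.
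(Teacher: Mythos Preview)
Your overall strategy---identify $\PP(\cE_{x})$ with the parabolic moduli space $\rM(r,L,r-1,\epsilon)$ for small $\epsilon$ and analyze the first wall in the parabolic weight---is the paper's strategy, and by the middle of your third paragraph you have essentially reached the conclusion. The gap is in what you believe remains to be done.

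You read ``strictly nef'' as requiring $\cO_{\PP(\cE_{x})}(1)\cdot C>0$ for every irreducible curve $C$, and your last two paragraphs are devoted to ruling out $\cO(1)$-trivial curves. This goal is unattainable: such curves exist, and the paper constructs one explicitly. Over a general point of the wall-crossing center in $\rM(r,L,r-1,1/\ell)$, the fiber of the small contraction $\pi_{-}\colon\PP(\cE_{x})\to\rM(r,L,r-1,1/\ell)$ is a projective space $\PP\,\Ext^{1}((E^{+},V^{+}),(E^{-},V^{-}))$; let $C$ be a line in it. A computation with the universal extension on $X\times\PP$ (using the normalization $\det\cE_{x}\cong\Theta^{\ell}$) gives $\cO_{\PP(\cE_{x})}(1)\cdot C=0$. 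In this paper ``strictly nef'' means nef but not ample; your own clause ``which is why the bundle is strictly nef rather than ample'' is the correct reading, and the curve $C$ is exactly what certifies non-ampleness. So the program you label as the ``main obstacle'' is both unnecessary and impossible.

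Two smaller points of comparison. First, the paper's route to nefness is shorter than your Hecke/determinant-of-cohomology computation: since $\rho(\PP(\cE_{x}))=2$ and $\pi_{-}$ contracts to a Picard-rank-one target with ample generator $A$, the single equality $\cO(1)\cdot C=0=\pi_{-}^{*}A\cdot C$ forces $\cO(1)$ to be a positive multiple of $\pi_{-}^{*}A$, hence semi-ample. If you pursue your approach, beware that for $d>1$ there is no morphism $\PP(\cE_{x})\to\rM(r,L(-x))$ (Remark~\ref{rmk:Hecke}), so the theta comparison must go through the parabolic theta bundle, not the ordinary one on $\rM(r,L(-x))$. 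Second, for $\cE_{x}^{*}\otimes\Theta$ the paper avoids a parallel analysis on $\PP(\cE_{x}^{*})$: under the isomorphism $\rM(r,L)\cong\rM(r,A^{r}\otimes L^{*})$ (for a degree-one line bundle $A$ on $X$), $\cE_{x}^{*}\otimes\Theta$ is the restriction of the normalized Poincar\'e bundle on the dual side, so the second assertion follows from the first.
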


We observe that this positivity statement has two interesting applications. 

\subsection{Derived category of $\rM(r, L)$}

Our original motivation for this work was to study the bounded derived category $\rD^{b}(\rM(r, L))$ of coherent sheaves on $\rM(r,L)$. The structure of $\rD^{b}(\rM(r, L))$, particularly its semiorthogonal decomposition, has attracted many experts. When $r=2$ and $d=1$, Narasimhan (and independently Belmans-Galkin-Mukhopadhyay) conjectured that $\rD^{b}(\rM(2,L))$ has the following semiorthogonal decomposition, which is known for $g = 2$ case (\cite[Theorem 2.9]{BO95}).

\begin{conjecture}\label{conj:semiorthogonaldecomp}
The derived category of $\rM(2,L)$ has a semiorthogonal decomposition
\[
	\rD^{b}(\rM(2,L)) = \langle \rD^{b}(\mathrm{pt}), \rD^{b}(\mathrm{pt}), \cdots \rD^{b}(X_{k}), \rD^{b}(X_{k}), \cdots, \rD^{b}(X_{g-2}), \rD^{b}(X_{g-2}), \rD^{b}(X_{g-1}) \rangle, 
\]
where $1 \leq k \leq g-2$. Here $X_k$ denotes the $k$-th symmetric product of $X$.
\end{conjecture}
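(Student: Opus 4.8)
The plan is to produce each summand as the image of an explicit integral (Fourier--Mukai) functor and then to verify the three defining properties of a semiorthogonal decomposition separately: that each functor is fully faithful, that the images are mutually right-orthogonal in the displayed order, and that together they generate $\rD^{b}(\rM(2,L))$. The positivity statement of Theorem~\ref{thm:nefintro} is designed to feed directly into the first two properties, since strict nefness of $\cE_{x}$ and $\cE_{x}^{*}\otimes\Theta$ yields exactly the cohomology vanishing that the relevant Ext-computations require; the third property is of a different, global nature and is where the real difficulty lies.

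For the construction, observe that $X_{0}=\mathrm{pt}$, so the two copies of $\rD^{b}(\mathrm{pt})$ are two exceptional objects and the decomposition is uniformly two copies of $\rD^{b}(X_{k})$ for $0\le k\le g-2$ together with a single copy of $\rD^{b}(X_{g-1})$. I would build the functors $\Phi_{k}\colon \rD^{b}(X_{k})\to\rD^{b}(\rM(2,L))$ from Hecke correspondences: over the incidence locus in $X_{k}\times\rM(2,L)$ parametrizing a degree-$k$ effective divisor $D$ together with a stable bundle, one has a tautological family obtained by Hecke modification of the normalized Poincar\'e bundle $\cE$ along $D$, and its pushforward serves as the kernel; tensoring by a suitable power of $\Theta$ produces the second copy in each pair. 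The top piece $\rD^{b}(X_{g-1})$ should come from the extreme of this range, where the modification ceases to be a subbundle inclusion.

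Fully faithfulness and semiorthogonality then reduce, via the Bondal--Orlov point-object criterion, to computing $\mathrm{RHom}$ between images of skyscrapers; along the fibers of the Hecke correspondences these $\mathrm{Ext}^{\bullet}$-groups are controlled by the cohomology of $\cE_{x}$ and $\cE_{x}^{*}\otimes\Theta$, so Theorem~\ref{thm:nefintro}, together with the analysis of the parabolic moduli spaces underlying it, supplies the required vanishing and fixes the correct ordering. This is essentially the mechanism already used by Narasimhan and by Fonarev--Kuznetsov to embed $\rD^{b}(X)=\rD^{b}(X_{1})$, and I expect it to extend to the higher symmetric products with substantially more bookkeeping.

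The main obstacle is generation: showing that the right orthogonal of all the images vanishes. Unlike the vanishing statements above, this cannot be reduced to a positivity input. I would attack it through a moduli space of Bradlow (stable) pairs $(E,s)$ with a variable stability parameter: as the parameter crosses walls one obtains a chain of flips whose extreme chambers are a projective bundle over $\rM(2,L)$ at one end and a space assembled from symmetric products and projective spaces at the other. Running Orlov's projective-bundle formula at the first end and tracking each wall-crossing with the variation-of-GIT window machinery, one can in principle account for all of $\rD^{b}(\rM(2,L))$ and match it against the proposed list, with a parallel check in the Grothendieck group and in Hodge theory confirming that nothing is missed. Making this wall-crossing bookkeeping precise --- in particular identifying each flipping locus with the correct symmetric product --- is the crux, and it is exactly the gap that keeps the statement conjectural rather than a consequence of Theorem~\ref{thm:nefintro} alone.
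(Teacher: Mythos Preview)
The statement you are attempting to prove is labeled \emph{Conjecture} in the paper, and the paper does not prove it. There is no ``paper's own proof'' to compare against: the paper only establishes the much weaker Theorem~\ref{thm:mainthm}, namely that the single functor $\Phi_{\cE}\colon\rD^{b}(X)\to\rD^{b}(\rM(r,L))$ is fully faithful when $g\ge r+3$. Even the embedding of $\rD^{b}(X_{2})$ is mentioned only in Remark~\ref{rmk:derivedcategory} as a separate result of Lee--Narasimhan requiring $g\ge 16$ and non-hyperellipticity. Your proposal is therefore not a proof but a strategy outline, and to your credit you say as much in the final paragraph.

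As a strategy, it is broadly reasonable and in line with how the community approaches this problem, but two points deserve caution. First, your claim that Theorem~\ref{thm:nefintro} ``supplies the required vanishing'' for full faithfulness and semiorthogonality of all the $\Phi_{k}$ is overstated: the paper's machinery (positivity of $\cE_{x}$, wall-crossings of parabolic moduli, Kawamata--Viehweg and Le~Potier vanishing) is calibrated to handle $\mathrm{Ext}^{\bullet}(\cE_{x},\cE_{y})$, i.e.\ the $k=1$ case, and already there a genus bound $g\ge r+3$ appears. Extending this to kernels over $X_{k}$ for $k\ge 2$ requires controlling cohomology of substantially more complicated bundles, and there is no indication that the same positivity input suffices. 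Second, your generation argument via Thaddeus-type wall-crossing of stable pairs is the right heuristic and has indeed been pursued, but as you acknowledge, identifying the flip loci with the correct symmetric products and tracking the derived category through each wall is precisely the unresolved core of the conjecture; it is not ``bookkeeping'' but the essential content.
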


More generally, it has been conjectured that $\rD^{b}(X)$ is embedded into $\rD^{b}(\rM(r, L))$ for every $r \ge 2$. For the normalized Poincar\'e bundle $\cE$ on $X \times \rM(r, L)$, we may consider the Fourier-Mukai transform $\Phi_{\cE} : \rD^{b}(X) \to \rD^{b}(\rM(r, L))$ with the kernel $\cE$. The following conjecture has been a well-known conjecture to experts:

\begin{conjecture}\label{conj:fullyfaithful}
The functor $\Phi_{\cE} : \rD^{b}(X) \to \rD^{b}(\rM(r, L))$ is fully faithful. Therefore, $\rD^{b}(X)$ is embedded into $\rD^{b}(\rM(r, L))$. 
\end{conjecture}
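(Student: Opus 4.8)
The plan is to verify the criterion of Bondal and Orlov for full faithfulness of a Fourier--Mukai functor. Since $\Phi_{\cE}$ sends the skyscraper $\cO_{x}$ to the honest vector bundle $\cE_{x}$ on $\rM(r,L)$ and $\dim X=1$, the criterion reduces the problem to three cohomological statements about the rank-$r^{2}$ bundles $\cE_{x}^{*}\otimes\cE_{y}=\cHom(\cE_{x},\cE_{y})$: (i) for $x\neq y$ one needs $\Ext^{i}_{\rM}(\cE_{x},\cE_{y})=H^{i}(\rM,\cE_{x}^{*}\otimes\cE_{y})=0$ for every $i$; (ii) for all $x$ one needs $H^{i}(\rM,\cEnd(\cE_{x}))=0$ for $i\geq 2$; and (iii) $\rHom(\cE_{x},\cE_{x})=\CC$, i.e. $\cE_{x}$ is simple. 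The group $H^{1}(\rM,\cEnd(\cE_{x}))$ is left unconstrained on the diagonal, where it realizes the one-dimensional tangent space $T_{x}X$. Moreover a Hirzebruch--Riemann--Roch computation, constant in $(x,y)$ because $\cE_{x}^{*}\otimes\cE_{y}$ moves in a flat family over $X\times X$, gives $\chi(\rM,\cE_{x}^{*}\otimes\cE_{y})=0$; hence off the diagonal it suffices to kill the positive-degree cohomology, after which $H^{0}$, and thus total acyclicity, comes for free.

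These bundles carry no positivity on $\rM$ itself. To create some, I would pass to the moduli of parabolic bundles obtained by adding a one-dimensional flag datum at each of $x$ and $y$; for small generic weights this is the fiber product of projective bundles $\rN=\PP(\cE_{x})\times_{\rM}\PP(\cE_{y})$. On $\rN$ the projection formula together with fiberwise vanishing identifies the cohomology of a natural tautological line bundle $\cM$ with $H^{*}(\rM,\cE_{x}^{*}\otimes\cE_{y}\otimes\Theta)$, and Theorem~\ref{thm:nefintro} makes $\cM$ strictly nef. The bundle we actually need, however, is $\cE_{x}^{*}\otimes\cE_{y}=(\cE_{x}^{*}\otimes\cE_{y}\otimes\Theta)\otimes\Theta^{-1}$, whose cohomology corresponds to $\cM\otimes\pi^{*}\Theta^{-1}$ on $\rN$; the unavoidable, genus-independent negative twist by $\Theta$ is exactly what prevents any vanishing theorem from applying on the nose.

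To absorb this twist I would exploit the birational geometry of the parabolic moduli spaces. Varying the parabolic weights gives a variation of GIT in which $\rN$ undergoes a chain of flips and the class of $\cM\otimes\pi^{*}\Theta^{-1}$ moves relative to the nef cone, so that $H^{*}(\rN,\cM\otimes\pi^{*}\Theta^{-1})$ can be transported to a birational model where this class becomes nef and big, or where a Hecke-type fibration to a lower-rank moduli space trivializes the computation. The large-genus hypothesis enters quantitatively here: only for $g\gg 0$ does the positivity furnished by Theorem~\ref{thm:nefintro} outweigh the fixed negative $\Theta$-contribution, allowing Kawamata--Viehweg vanishing to be applied on the appropriate model and yielding $H^{\geq 1}=0$ off the diagonal and $H^{\geq 2}=0$ on it.

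The principal obstacle is thus not the formal reduction but the positivity bookkeeping across these wall-crossings: Theorem~\ref{thm:nefintro} supplies only strict nefness, whereas the vanishing theorems demand nef and big, and the off-diagonal case $x\neq y$ is strictest of all, forcing even $H^{0}$ to vanish (equivalently, no nonzero homomorphism $\cE_{x}\to\cE_{y}$ when the marked points differ). The truly delicate point is tracking how $H^{1}$ behaves across the walls, so that it is annihilated for $x\neq y$ yet permitted to survive on the diagonal; this jump is precisely what distinguishes a genuine fully faithful embedding from a numerical coincidence. I expect the explicit wall-and-chamber analysis of the parabolic weight space, combined with the sharp large-genus estimate, to carry the real weight of the argument, with the simplicity of $\cE_{x}$ and the off-diagonal acyclicity falling out once a model with manifest positivity is in hand.
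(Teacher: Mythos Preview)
Your proposal is in the same broad spirit as the paper's argument---Bondal--Orlov criterion, reinterpretation on a parabolic moduli space, wall-crossing, Kawamata--Viehweg vanishing---but several of the specific mechanisms you identify are off, and one essential ingredient is missing.

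First, the fiber product. The paper works on $\PP(\cE_{x})\times_{\rM}\PP(\cE_{y}^{*})\cong\rM(r,L,(r-1,1),(\epsilon,\epsilon))$, whose tautological $\cO(1,1)$ pushes forward directly to $\cE_{x}\otimes\cE_{y}^{*}$ with no twist. The ``unavoidable negative $\Theta$-twist'' you describe is an artifact of your choice of model. What \emph{is} true is that $\cO(1,1)$ is not nef on this space; but the paper does not resolve this by outweighing anything. One computes $\omega=\cO(-r,-r)\otimes\Theta^{-2}$ and checks, via the explicit effective cone (Proposition~\ref{prop:effectivecone}), that $\cO(1,1)\otimes\omega^{-1}=\cO(r+1,r+1)\otimes\Theta^{2}$ lies in its interior. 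Hence on \emph{some} wall-crossing model this class is nef and big, and Kawamata--Viehweg gives $\rH^{i}(\cO(1,1))=0$ for all $i>0$ there. This step is entirely independent of $g$.

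Second, and more seriously, you misidentify where the genus bound enters. It is not a positivity-versus-negativity contest. The vanishing on the flipped model holds for every $g\ge 2$; the issue is transporting it back to $\PP(\cE_{x})\times_{\rM}\PP(\cE_{y}^{*})$. The flipping centers have codimension at least $(r-1)(g-1)+1$ by Sun's estimate on the unstable locus (Theorem~\ref{thm:codimunstable}), so local-cohomology comparison preserves $\rH^{i}$ only for $i<(r-1)(g-1)$. This is the sole source of the genus constraint. For the complementary range $i\ge r^{2}$ the paper uses a completely separate input you do not mention: Le Potier vanishing applied to the ample bundle $\cE_{x}\otimes(\cE_{y}^{*}\otimes\Theta)\otimes\Theta$ on $\rM(r,L)$. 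The bound $g\ge r+3$ is exactly the inequality $(r-1)(g-1)\ge r^{2}$ that makes the two ranges meet. Without Le Potier the argument does not close.

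Two smaller points. The off-diagonal $\rH^{0}$ is not obtained from your $\chi=0$ trick (the wall-crossing argument does not reach $i=0$ anyway); the paper invokes stability of $\cE_{x}$ together with the theorem of Lange--Newstead that $\cE_{x}\not\cong\cE_{y}$ for $x\ne y$. And the diagonal conditions---simplicity of $\cE_{x}$ and vanishing of $\rH^{\ge 2}(\cEnd(\cE_{x}))$---are not handled here by wall-crossing at all but simply cited from \cite{BM19}, extending \cite{NR75}. There is no delicate ``$\rH^{1}$-tracking across walls'' of the kind you anticipate.
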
 

This is shown for $r = 2$ and $d = 1$ in \cite{Nar17, Nar18, FK18} and for $d = 1$ and $g \ge r + 3$ in \cite{BM19}. This paper proves that the same result holds for any pair $(r, d)$ if they are coprime and if $g$ is larger compared to $r$. 

\begin{theoremletter}\label{thm:mainthm}
Let $r \ge 2$, $d < r$ be two coprime positive integers. If $g \ge r+3$, then $\Phi_{\cE}$ is fully faithful. 
\end{theoremletter}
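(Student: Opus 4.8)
The plan is to verify the Bondal--Orlov spanning-class criterion for $\Phi_{\cE}$. Since $\Phi_{\cE}(\CC_{x})=\cE_{x}$ is a genuine vector bundle on the smooth projective Fano variety $\rM(r,L)$, sitting in cohomological degree $0$, full faithfulness is equivalent to the following statements about the skyscrapers $\CC_{x}$: for $x\neq y$ one has $H^{\bullet}(\rM(r,L),\cE_{x}^{*}\otimes\cE_{y})=0$ in every degree; and for $x=y$ one has $H^{0}(\rM(r,L),\cEnd\,\cE_{x})=\CC$ together with $H^{i}(\rM(r,L),\cEnd\,\cE_{x})=0$ for all $i\geq 2$. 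Because $\dim X=1$, the criterion imposes \emph{no} condition on $H^{1}$ of the diagonal term; this point will matter decisively below.

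First I would convert these cohomology groups into the cohomology of a single line bundle via the Hecke correspondence. Let $\pi_{x}\colon\PP(\cE_{x}^{*})\to\rM(r,L)$ and $\pi_{y}\colon\PP(\cE_{y})\to\rM(r,L)$ be the two projective bundles normalized so that $\pi_{x*}\cO(1)=\cE_{x}^{*}$ and $\pi_{y*}\cO(1)=\cE_{y}$, and form the fibre product $Z=\PP(\cE_{x}^{*})\times_{\rM(r,L)}\PP(\cE_{y})$ with structure map $\rho\colon Z\to\rM(r,L)$ and relative hyperplane classes $\xi_{x},\xi_{y}$. Since the higher direct images of $\cO(1)$ along a $\PP^{r-1}$-bundle vanish, the projection formula gives $H^{\bullet}(\rM(r,L),\cE_{x}^{*}\otimes\cE_{y})\cong H^{\bullet}(Z,\cL)$ for the line bundle $\cL=\xi_{x}+\xi_{y}$; for a suitable choice of weights $Z$ is a moduli space of parabolic bundles carrying parabolic data at $x$ and $y$. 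A short computation with the two relative Euler sequences and the identity $K_{\rM(r,L)}=-2\Theta$ yields $K_{Z}=-r\xi_{x}-r\xi_{y}-2\rho^{*}\Theta$, whence
\[
	\cL-K_{Z}=(r+1)\xi_{x}+(r+1)\xi_{y}+2\rho^{*}\Theta =: A.
\]

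For $x\neq y$ the goal is the Kawamata--Viehweg vanishing $H^{i}(Z,\cL)=0$ for $i>0$, for which it suffices that $A$ be nef and big. This is where Theorem A enters: strict nefness of $\cE_{x}^{*}\otimes\Theta$ and of $\cE_{y}$ says precisely that $\xi_{x}+\rho^{*}\Theta$ and $\xi_{y}$ are positive on every curve of $Z$. Rewriting $A=(r+1)(\xi_{x}+\rho^{*}\Theta)+(r+1)\xi_{y}-(r-1)\rho^{*}\Theta$ exposes the difficulty: these two strictly nef classes must absorb the negative multiple $-(r-1)\rho^{*}\Theta$. To control this I would use the \emph{second} ruling of $\PP(\cE_{x}^{*})$, the Hecke modification at $x$, which realizes it as a projective bundle over $\rM(r,L(-x))$ and pins down the remaining extremal ray of the nef cone; combined with the curve-positivity of $\xi_{x}+\rho^{*}\Theta$ this forces $A$ into the nef cone. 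Bigness I would check numerically through $A^{\dim Z}>0$: this top self-intersection is a Verlinde-type number on $\rM(r,L)$ whose leading term is a positive multiple of $\Theta^{\dim\rM(r,L)}$, with lower-order corrections dominated once $g$ is large --- this is the precise role of $g\geq r+3$. Finally $H^{0}(\rM(r,L),\cE_{x}^{*}\otimes\cE_{y})=0$ follows from Theorem A directly, since a nonzero map $\cE_{x}\to\cE_{y}$ between strictly nef bundles of equal rank and equal $\Theta$-slope is forced to be an isomorphism, impossible as the normalized Poincar\'e bundle separates $x$ from $y$.

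The diagonal case $x=y$ demands a different treatment, and is the subtlest point. The number $A^{\dim Z}$ is constant in the family over $X\times X$, so $A$ stays big on $Z_{x,x}$; were $A$ also nef there, Kawamata--Viehweg would force $H^{1}(\rM(r,L),\cEnd\,\cE_{x})=0$, contradicting the $\Ext^{1}$ demanded by the very full faithfulness we are proving. Thus $A$ must fail nefness on the diagonal by exactly the margin that lets $\Ext^{1}(\cE_{x},\cE_{x})$ survive, and no naive vanishing applies. Instead I would organize the groups into relative $\Ext$-sheaves $\mathcal{E}xt^{i}$ of the universal objects over $X\times X$; the off-diagonal vanishing already established supports these sheaves on the diagonal, and a local analysis there --- again powered by the positivity of Theorem A --- shows them concentrated in degrees $0$ and $1$, giving $H^{i}(\rM(r,L),\cEnd\,\cE_{x})=0$ for $i\geq 2$, while simplicity $H^{0}(\cEnd\,\cE_{x})=\CC$ follows from strict nefness as every endomorphism is a homothety. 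The main obstacle throughout is the nef-and-bigness of $A$ off the diagonal: balancing the negative $\rho^{*}\Theta$ contribution against the strictly nef classes requires the full birational geometry of the parabolic moduli space --- the Hecke contraction and its wall-crossings --- and it is there, together with the numerical bigness estimate, that the hypothesis $g\geq r+3$ is indispensable.
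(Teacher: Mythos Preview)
Your overall strategy---reduce to line bundle cohomology on $Z=\PP(\cE_{x})\times_{\rM(r,L)}\PP(\cE_{y}^{*})$ and apply Kawamata--Viehweg to $A=\cL-K_{Z}=\cO(r+1,r+1)\otimes\Theta^{2}$---is exactly how the paper proceeds. But the decisive step, showing $A$ nef on $Z$, is a genuine gap, and your justification of the role of $g\ge r+3$ is incorrect.

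The claim that the ``second ruling'' of $\PP(\cE_{x}^{*})$, i.e.\ the Hecke contraction to $\rM(r,L(-x))$, pins down the missing extremal ray and forces $A$ into the nef cone of $Z$ is false for $d\neq 1$. For general $d$ there is \emph{no morphism} $\PP(\cE_{x}^{*})\to\rM(r,L(-(r-1)x))$; the Hecke map is only rational, and one must perform a sequence of parabolic wall-crossings (flips) before it becomes regular (see Remark~\ref{rmk:Hecke} and Remark~\ref{rmk:d=1}). Concretely, under the identification of the effective cone with the cone over $[0,1]^{2}$ (Proposition~\ref{prop:weightanddivisor}), the class $A$ corresponds to the weight $\ba=((r+1)/2r,(r+1)/2r)$, which for generic $d$ lies in a chamber separated from the small-weight chamber by several walls; hence $A$ is \emph{not} nef on $Z$. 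What the paper does instead is observe that, since every wall-crossing is a flip (Corollary~\ref{cor:codimcenter}), $A$ is nef and big on the birational model $\rM(r,L,\bm,\ba)$ in its own chamber (Corollary~\ref{cor:positivity}); Kawamata--Viehweg gives full vanishing \emph{there}, and one transports it back to $Z$ via local cohomology, which is possible only for $i<(r-1)(g-1)$, the codimension bound on the flip centers (Corollary~\ref{cor:vanishing}).

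This is where $g\ge r+3$ actually enters: not through a bigness estimate $A^{\dim Z}>0$ (bigness is automatic since $A$ lies in the interior of the effective cone and holds for every $g\ge2$), but because the wall-crossing argument only covers $0<i<(r-1)(g-1)$, while Le Potier vanishing on $\rM(r,L)$ (using that $\cE_{x}\otimes(\cE_{y}^{*}\otimes\Theta)\otimes\Theta$ is ample) covers $i\ge r^{2}$ (Proposition~\ref{prop:highervanishing}). These two ranges meet precisely when $(r-1)(g-1)\ge r^{2}$, i.e.\ $g\ge r+3$. Two smaller points: the vanishing of $H^{0}$ for $x\ne y$ uses \emph{stability} of $\cE_{x}$ (\cite{LN05}), not strict nefness; and the diagonal conditions (1), (2) of the Bondal--Orlov criterion are not handled by any variant of this positivity argument---the paper simply cites \cite[Section~3]{BM19}, which extends the deformation-theoretic computation of Narasimhan--Ramanan.
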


\begin{remark}\label{rmk:derivedcategory}
\begin{enumerate}
\item Recently, the first author and Narasimhan proved that if $X$ is non-hyperelliptic and $g \ge 16$, then $\rD^{b}(X_{2})$ is embedded into $\rD^{b}(\rM(2, L))$ (\cite{LN21}).
\item For $r = 3$ and $d = 1$, Gomez and the first author provided an explicit conjectural semiorthogonal decomposition (\cite[Conjecture 1.9]{GL20}), and new motivic decompositions of $\rM(r, L)$ for $r \le 3$, compatible with the conjecture, have been found. See \cite{BGM18, Lee18, GL20} and references therein for more details. 
\end{enumerate}
\end{remark}

\subsection{ACM bundles on $\rM(r, L)$}

Our second application is a discovery of a family of ACM bundles on $\rM(r, L)$. For an $n$-dimensional projective variety $V$ with an ample line bundle $A$, a vector bundle $F$ is called \emph{ACM} if $\rH^{i}(V, F \otimes A^{j}) = 0$ for all $j \in \ZZ$ and $0 < i < n$. An ACM bundle $F$ is \emph{Ulrich} if $\rH^{0}(V, F \otimes A^{-1}) = 0$ and $\rH^{0}(V, F) = \mathrm{rank}\; F \cdot \deg V$. ACM bundles naturally appear in matrix factorization (\cite{Eis80}) and correspond to maximal Cohen-Macaulay modules in commutative algebra (\cite{Yos90}). Ulrich bundles enable us to compute their associated Chow forms, and Eisenbud and Schreyer conjectured that every projective variety admits an Ulrich sheaf (\cite{ES03}). However, since the above strong cohomology vanishing is not easy to expect and hard to verify, very few results are known for higher dimensional varieties, even for the existence of ACM bundles (except some trivial examples). 

As a second application of Theorem \ref{thm:nefintro}, we show the following theorem. 

\begin{theoremletter}\label{thm:ACMintro}
Let $r \ge 2$, $0< d < r$ be two coprime positive integers. If $g \ge 3$, $\cE_x$ is an ACM bundle on $\rM(r, L)$ with respect to $\Theta$.
\end{theoremletter}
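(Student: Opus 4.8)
The plan is to reduce the required vanishing to the cohomology of a single line bundle on the projectivization of $\cE_x$, and then to control all twists by combining Kodaira-type vanishing (fed by Theorem~\ref{thm:nefintro}) on the two tails with the Hecke correspondence in the middle. Write $n = (r^2-1)(g-1) = \dim \rM(r,L)$; we must show $\rH^i(\rM(r,L), \cE_x \otimes \Theta^j) = 0$ for every $j \in \ZZ$ and $0 < i < n$. Let $p \colon \mathcal{H} := \PP(\cE_x) \to \rM(r,L)$ be the projective bundle with $p_* \cO(1) = \cE_x$, and set $\xi := \cO_{\mathcal{H}}(1)$. Since $R^{>0} p_* \cO(1) = 0$, the projection formula and the Leray spectral sequence give $\rH^i(\rM(r,L), \cE_x \otimes \Theta^j) \cong \rH^i(\mathcal{H}, \xi \otimes p^* \Theta^j)$, so it suffices to prove the vanishing for the line bundles $\cL_j := \xi \otimes p^*\Theta^j$ on $\mathcal{H}$. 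The gain is that each $\cL_j$ is a genuine line bundle, so Kodaira vanishing can kill \emph{all} positive degrees at once, rather than only the range $i \ge r$ that a direct application of Le Potier vanishing on $\rM(r,L)$ would give.

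First I would treat the large-$j$ tail. Using $\det \cE_x = \Theta^\ell$, the relative canonical formula, and the index-two identity $\omega_{\rM(r,L)} \cong \Theta^{-2}$, one computes $-K_{\mathcal{H}} \cong \xi^{r} \otimes p^*\Theta^{2-\ell}$, whence $\cL_j \otimes K_{\mathcal{H}}^{-1} \cong \xi^{r+1} \otimes p^*\Theta^{j+2-\ell}$. Because $\cE_x$ is nef by Theorem~\ref{thm:nefintro}, the bundle $\cE_x \otimes \Theta^t$ is ample for every integer $t > 0$, so $\xi \otimes p^*\Theta^t$ is ample for $t>0$; thus $\cL_j \otimes K_{\mathcal{H}}^{-1}$ is ample as soon as $j \ge \ell - 1$, and Kodaira vanishing yields $\rH^i(\mathcal{H}, \cL_j) = 0$ for all $i > 0$. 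For the very negative tail I would pass to the dual side: Serre duality on $\rM(r,L)$ together with $\omega_{\rM(r,L)} \cong \Theta^{-2}$ gives $\rH^i(\rM(r,L), \cE_x \otimes \Theta^j)^\vee \cong \rH^{n-i}(\rM(r,L), \cF \otimes \Theta^{-j-3})$, where $\cF := \cE_x^* \otimes \Theta$ is again nef by Theorem~\ref{thm:nefintro} and $\det \cF = \Theta^{r-\ell}$. Running the identical projective-bundle argument for $\cF$ (now with threshold $r-\ell-1$) kills $\rH^{n-i}$ for $n - i > 0$ once $j \le \ell - r - 2$.

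These two ranges leave only the twists $\ell - r - 1 \le j \le \ell - 2$, an interval of exactly $r$ integers, and this middle range is the heart of the matter: there $\cL_j \otimes K_{\mathcal{H}}^{-1}$ lies on the boundary of the nef cone, so Kodaira vanishing is unavailable. To handle it I would exploit the second ruling of $\mathcal{H}$. The Hecke correspondence realizes $\mathcal{H}$ simultaneously as a $\PP^{r-1}$-bundle $q \colon \mathcal{H} \to \rM(r, L(-x))$, and the birational geometry of the intervening moduli of parabolic bundles pins down the nef cone of $\mathcal{H}$ together with the numerical class of $\cL_j$ relative to $q$. The goal is to show that for each $j$ in the middle range the relative degree of $\cL_j$ along the fibers of $q$ lands in the acyclic window $\{-1, \dots, -(r-1)\}$, so that $Rq_* \cL_j = 0$ and hence $\rH^\bullet(\mathcal{H}, \cL_j) = 0$ in every degree; a single boundary twist, if not covered this way, can be absorbed by Kawamata--Viehweg vanishing using the bigness of $\xi$.

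I expect the middle range to be the main obstacle. Everything there rests on the explicit intersection theory of the two-ruling structure---equivalently, on the precise wall-and-chamber description of the parabolic moduli spaces underlying Theorem~\ref{thm:nefintro}---and on verifying that the acyclic window for $q$ exactly matches the gap left by the two Kodaira tails. This matching is where the hypothesis $g \ge 3$ should enter: it guarantees that the Hecke maps are honest projective bundles and that the relevant moduli spaces are Fano of the expected index, so that the canonical-bundle and nef-cone computations invoked above are valid.
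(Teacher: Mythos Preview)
Your outer shell is fine and matches the paper: the passage to $\mathcal{H}=\PP(\cE_x)$, the computation of $K_{\mathcal{H}}$, Kodaira vanishing for $j\ge \ell-1$, and the Serre-duality reduction via $\cF=\cE_x^*\otimes\Theta$ are exactly Lemmas~\ref{lem:Kodairavanishing} and Step~1 of the paper's proof. The gap is in your treatment of the middle range.

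The assertion that ``the Hecke correspondence realizes $\mathcal{H}$ simultaneously as a $\PP^{r-1}$-bundle $q\colon\mathcal{H}\to\rM(r,L(-x))$'' is false in general. As the paper emphasizes (Remark~\ref{rmk:Hecke} and Remark~\ref{rmk:d=1}), for $d>1$ the space $\rM(r,L,r-1,\epsilon)\cong\PP(\cE_x)$ does \emph{not} admit a morphism to $\rM(r,L(-x))$; one must first cross walls (flips) to reach $\rM(r,L,r-1,1-\epsilon)$, and only then does the generalized Hecke map exist. So there is no second ruling on $\mathcal{H}$ to exploit, and the ``acyclic window'' computation you propose cannot be carried out. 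Your fallback, Kawamata--Viehweg via the bigness of $\xi$, buys at most the single twist $j=\ell-2$ (since the nef cone of $\mathcal{H}$ is generated by $\xi$ and $p^*\Theta$, so $\xi^{r+1}\otimes p^*\Theta^{j+2-\ell}$ is nef only for $j\ge\ell-2$), not the whole window. Finally, the role you assign to $g\ge3$ is not the correct one: it has nothing to do with the Hecke map being a projective bundle.

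What the paper actually does for the middle range is different. After the Serre-duality reduction to $j\ge -1$, it combines two inputs. First, Le~Potier vanishing applied directly on $\rM(r,L)$ to the ample bundle $\cE_x\otimes\Theta$ kills $\rH^i$ for $i\ge r$ and all $j\ge -1$ (Lemma~\ref{lem:LePotiervanishing}). Second, on a flipped model $\rM(r,L,r-1,a)$ the line bundle $\xi^{r+1}\otimes\Theta^{j+2-\ell}$ becomes genuinely big (the effective cone there is generated by $\Theta$ and $\cO(r)\otimes\Theta^{1-\ell}$), so Kawamata--Viehweg gives vanishing on that model; since the flips have centers of codimension $\ge (r-1)(g-1)+1$, local cohomology transports this back to $\mathcal{H}$ for $1\le i\le (r-1)(g-1)-1$ (Lemma~\ref{lem:wallcrossing}). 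The condition $g\ge3$ is exactly what makes these two ranges overlap: $(r-1)(g-1)-1\ge r-1$. The Hecke map is invoked only in the residual case $\ell=1$, $j=-1$, where it \emph{is} a morphism, and even there the argument requires Koll\'ar's theorem because $\rM(r,L(-x))$ is singular and $q$ is not a projective bundle over the strictly semistable locus.
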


Indeed, our theorem proves that $\cE_{x}$ is ACM with respect to every ample line bundle on $\rM(r, L)$ -- see Definition \ref{def:ACM} and Remark \ref{rmk:veryample}.

Our proof does not cover $g = 2$ case and it seems that it requires a new technique. We expect that $\cE_{x}$ is still ACM in this case. It will be an interesting problem to verify it. Note that $\cE_{x}$ is an ACM bundle when $g=r=2$ (\cite{CKL19, FK18}). 

\subsection{Sketch of proof}

The key ingredient of the proof is a study of birational geometry of the moduli space of parabolic bundles. The moduli space $\rM(r, L, \bm, \ba)$ (see Section \ref{sec:parabolic} for the definition and notation) of parabolic bundles depends on the choice of stability condition $\ba$ and the analysis of wall-crossing has been studied well (Section \ref{sec:wallcrossing}). Moreover, the birational geometry of $\rM(r, L, \bm, \ba)$ is governed by the wall-crossing: Every rational contraction that appears in Mori's program can be described in terms of wall-crossings or their degenerations -- the forgetful map (Example \ref{ex:forgetfulmap}) and generalized Hecke correspondences (Remark \ref{rmk:Hecke}). Consult \cite{MY20, MY21} to see a more general framework. 

\subsubsection{The positivity of $\cE_{x}$ and $\cE_{x}^{*}\otimes \Theta$}

The first observation is that $\PP(\cE_{x})$ is isomorphic to the moduli space $\rM(r, L, r-1, \epsilon)$ with sufficiently small parabolic weight $\epsilon > 0$. From the wall-crossing diagram, we can obtain two extremal rays of the nef cone of $\PP(\cE_{x})$. Theorem \ref{thm:nefintro} follows from the intersection computation on $\PP(\cE_{x})$ in Section \ref{sec:nef}. 

\subsubsection{Embedding of the derived category}

Bondal-Orlov criterion (\cite[Theorem 1]{BO95}, Theorem \ref{thm:vanishing}) deduces Theorem \ref{thm:mainthm} to a problem checking the vanishing of cohomologies of the form $\rH^{i}(\rM(r, L), \cE_{x} \otimes \cE_{y}^{*})$. We relate this vanishing with vanishing of a certain line bundle on $\PP(\cE_{x}) \times_{\rM(r, L)}\PP(\cE_{y}^{*})$, which is identified with $\rM(r, L, (r-1, 1), (\epsilon, \epsilon))$. In Section \ref{sec:derivedcategory}, we study the birational geometry of the latter space. We compute the effective cone and show that it is of Fano type. Then the vanishing follows from Kawamata-Viehweg vanishing and Le Potier vanishing theorems, and cohomology extensions. 

\subsubsection{ACM bundles}

Since the ACM condition is a cohomology vanishing condition, we may apply the same technique. We replace the vanishing of the cohomology of $\cE_{x} \otimes \Theta^{j}$ on $\rM(r, L)$ with that of line bundles on $\PP(\cE_{x}) \cong \rM(r, L, r-1, \epsilon)$. Then the above-mentioned technique can be applied in this setup, and we verity the vanishing in Section \ref{sec:ACM}.

\subsection*{Conventions}
\begin{itemize}
\item We work over $\CC$. 
\item In this paper, $X$ denotes a smooth connected projective curve of genus $g \ge 2$. 
\item The coarse moduli space of rank $r$, determinant $L$ semistable vector bundles on $X$ is denoted by $\rM(r, L)$. The degree of $L$ is denoted by $d$. Unless stated explicitly, we assume that $(r, d) = 1$, so $\rM(r, L)$ is a smooth projective variety. 
\item Let $\Theta$ be the ample generator on $\Pic(\rM(r, L))$. 
\item Let $\cE$ be the normalized Poincar\'e bundle on $X \times \rM(r, L)$ such that for each $x \in X$, its restriction $\cE_{x}$ to $x \times \rM(r, L) \cong \rM(r, L)$ has the determinant $\Theta^{\ell}$. Here $\ell$ is a unique integer satisfying $\ell d \equiv 1 \;\mathrm{mod}\; r$ and $0 < \ell < r$. 
\item For a vector space $W$, $\PP(W)$ is the projective space of one-dimensional quotients. 
\item For a variety $V$, the bounded derived category of coherent sheaves on $V$ is denoted by $\rD^b(V)$. 
\item Every algebraic stack is defined over the fppf topology.
\end{itemize}

\subsection*{Acknowledgements}

The authors thank M. S. Narasimhan for drawing their attention to this problem, sharing his idea, and providing valuable suggestions about this and related projects. Especially, the first author would like to express his deepest gratitude to him for invaluable teachings and warm encouragements for many years. Part of this work was done when the first author was visiting Indian Institute of Science (Bangalore) where he enjoyed wonderful working conditions. He thanks Gadadhar Misra for kind hospitality during his stay IISc. He also thanks Ludmil Katzarkov and Simons Foundation for partially supporting this work via Simons Investigator Award-HMS.


\section{Parabolic bundles and their moduli spaces}\label{sec:parabolic}

In this section, we introduce the notion of parabolic vector bundles and their moduli space. This paper only considers the parabolic structure consisting of at most one flag for each parabolic point. Fix a smooth connected projective curve $X$ and a finite ordered set $\bp := (p_{1}, p_{2}, \cdots, p_{k})$ of distinct closed points of $X$. 

\begin{definition}\label{def:parabolicbundle}
A \emph{parabolic bundle} over a pointed curve $(X, \bp)$ of rank $r$ is a collection of data $(E, V_{\bullet})$ where
\begin{enumerate}
\item $E$ is a rank $r$ vector bundle over $X$;
\item $V_{\bullet} = (V_{1}, V_{2}, \cdots, V_{k})$ where $V_{i}$ is a subspace of $E|_{p_{i}}$. The dimension of $V_{i}$ is called the \emph{multiplicity} of $V_{i}$ and denoted by $m_{i}$. 
\end{enumerate}
The sequence $\bm = (m_{1}, m_{2}, \cdots, m_{k})$ is called the \emph{multiplicity} of the parabolic bundle $(E, V_{\bullet})$. 
\end{definition}

\begin{definition}\label{def:modulistackofparabolicbundle}
Let $\cM_{(X, \bp)}(r, d, \bm)$ (resp. $\cM_{(X, \bp)}(r, L, \bm)$) be the moduli stack of parabolic bundles $(E, V_{\bullet})$ over $(X, \bp)$ of rank $r$, degree $d$ (resp. determinant $L$), and multiplicity $\bm$. If there is no confusion, we use $\cM(r, d, \bm)$ (resp. $\cM(r, L, \bm)$). 
\end{definition}

It is straightforward to see that $\cM(r, L, \bm)$ is a $\times \Gr(m_{p_{i}}, r)$-bundle over $\cM(r, L)$, the stack of all vector bundles of rank $r$ and determinant $L$. In particular, this Artin stack is highly non-separated. To obtain a projective coarse moduli space that enables us to do projective birational geometry, we need to introduce a stability condition. 

For a parabolic bundle $(E, V_{\bullet})$, a \emph{parabolic subbundle} $(F, W_{\bullet})$ is a pair such that $F \subset E$ is a subbundle and $W_{i} = F|_{i}\cap V_{i}$. A \emph{parabolic quotient bundle} is defined as a parabolic bundle $(E/F, Y_{\bullet})$ such that $Y_{i} = \im (V_{i} \to E/F|_{i})$. 

A \emph{parabolic weight} $\ba = (a_{1}, a_{2}, \cdots, a_{k})$ is a sequence of rational numbers such that $0 < a_{i} < 1$. Intuitively, we may regard $\ba$ as extra weight for the parabolic flags. For a parabolic bundle $(E, V_{\bullet})$, its \emph{parabolic degree} is $\pdeg (E, V_{\bullet}) := \deg E + \sum_{1\le i \le k}m_{i}a_{i}$. The same parabolic weight can induce the parabolic degree for parabolic subbundles and parabolic quotient bunddles of $(E, V_{\bullet})$. The \emph{parabolic slope} is $\mu(E, V_{\bullet}) := \pdeg (E, V_{\bullet})/\rk E$. 

\begin{definition}\label{def:stability}
Fix a parabolic weight $\ba$. A parabolic bundle $(E, V_{\bullet})$ is \emph{$\ba$-(semi)stable} if for every parabolic subbundle $(F, W_{\bullet})$, $\mu(F, W_{\bullet}) \;(\le ) < \mu(E, V_{\bullet})$. A parabolic weight $\ba$ is \emph{general} if the $\ba$-semistability coincides with the $\ba$-stability. 
\end{definition}

\begin{definition}\label{def:modulispaceparabolicbundle}
Let $(X, \bp)$ be a $k$-pointed curve of genus $g \ge 2$. Let $\cM(r, d, \bm, \ba)$ (resp. $\cM(r, L, \bm, \ba)$) be the moduli stack of rank $r$, degree $d$ (resp. determinant $L$), $\ba$-semistable parabolic bundles over $(X, \bp)$. Let $\rM(r, d, \bm, \ba)$ (resp. $\rM(r, L, \bm, \ba)$) be its good moduli space, which is a normal projective variety of dimension $r^{2}(g-1)+1 + \sum m_{i}(r-m_{i})$ (resp. $(r^{2}-1)(g-1) + \sum m_{i}(r-m_{i})$). When $\ba$ is general, both $\rM(r, d, \bm, \ba)$ and $\rM(r, L, \bm, \ba)$ are nonsingular.
\end{definition}

\begin{remark}
When $g \le 1$, the moduli space behaves differently. For instance, if $g = 0$, depending on $\ba$, $\cM(r, L, \bm, \ba)$ may be empty. Consult \cite{MY21}. 
\end{remark}

\begin{example}\label{ex:smallweight}
The inequality $\mu(F, W_{\bullet}) \le \mu(E, V_{\bullet})$ defining the semistability can be understood as a perturbation of the inequality $\mu(F) \le \mu(E)$ for the semistability of the underlying bundle. If $(r, d = \det L) = 1$ and each coefficient of $\ba$ is sufficiently small and general, then the parabolic weight does not affect on the stability. Therefore, a parabolic bundle $(E, V_{\bullet})$ is $\ba$-stable if and only if the underlying bundle $E$ is stable. Thus, there is a forgetful morphism $\cM(r, L, \bm, \ba) \to \cM(r, L)$ and that between coarse moduli spaces 
\[
	\pi : \rM(r, L, \bm, \ba) \to \rM(r, L)
\]
and $\pi$ is a $\times \Gr(m_{i}, r)$-fibration. Indeed, for a fixed Poincar\'e bundle $\cE$ over $X \times\rM(r, L)$, 
\[
	\rM(r, L, \bm, \ba) \cong \times_{\rM(r, L)}\Gr(m_{i}, \cE_{p_{i}}).
\]
\end{example}

\begin{example}\label{ex:forgetfulmap}
More generally, if $\ba = (a_{i})$ is general and one $a_{i}$ is sufficiently small, then forgetting one flag does not affect on the stability calculation. Thus, there is a forgetful morphism 
\[
	\pi : \rM_{(X, \bp)}(r, L, \bm, \ba) \to \rM_{(X, \bp')}(r, L, \bm', \ba')
\]
where $\bp' = \bp \setminus \{p_{i}\}$, $\bm' = \bm \setminus \{m_{i}\}$, and $\ba' = \ba \setminus \{a_{i}\}$. This is a $\Gr(m_{i}, r)$-fibration. 
\end{example}

\begin{example}\label{ex:topandzerodimflags}
Fix a $k$-pointed curve $(X, \bp)$. Let $\bp' := \bp \setminus \{p_{k}\}$. Let $\bm' = (m_{i})_{1 \le i \le k-1}$ and $\ba' = (a_{i})_{1 \le i \le k-1}$. Suppose that $m_{k} = 0$ or $r$. Then 
\[
	\rM_{(X, \bp)}(r, L, \bm, \ba) \cong \rM_{(X, \bp')}(r, L, \bm', \ba'). 
\]
\end{example}

When one of the parabolic weights is sufficiently close to one, there is another contraction morphism. 

\begin{proposition}\label{prop:generalizedHeckemodification}
We use the notation in Example \ref{ex:topandzerodimflags}. For a general parabolic weight $\ba = (a_{i})$, assume that $a_{k}$ is sufficiently close to one. Then there exists a functorial morphism 
\[
	\pi_{1} : \rM_{(X, \bp)}(r, L, \bm, \ba) \to \rM_{(X, \bp')}(r, L(-(r-m_{k})p_{k}), \bm', \ba').
\]
\end{proposition}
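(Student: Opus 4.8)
The plan is to realize $\pi_{1}$ as a relative elementary (Hecke) modification at $p_{k}$, and to check that as $a_{k} \to 1$ the stability condition on $(X, \bp)$ matches the stability condition on $(X, \bp')$ for the modified bundle. On objects, given a parabolic bundle $(E, V_{\bullet})$ of type $(\bm, \ba)$, let $Q := E|_{p_{k}}/V_{k}$, a skyscraper sheaf of length $r - m_{k}$ supported at $p_{k}$, and set $E' := \ker(E \twoheadrightarrow Q)$. A local computation at $p_{k}$ (trivialize $E \cong \cO^{\oplus r}$ with $V_{k} = \langle e_{1}, \dots, e_{m_{k}}\rangle$, so that $E' = \langle e_{1}, \dots, e_{m_{k}}, z e_{m_{k}+1}, \dots, z e_{r}\rangle$ for a local coordinate $z$) shows that $E'$ is again locally free of rank $r$, with $\deg E' = d - (r - m_{k})$ and $\det E' \cong L(-(r-m_{k})p_{k})$. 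Since $E'$ and $E$ agree away from $p_{k}$, the flags $V_{1}, \dots, V_{k-1}$ restrict to flags on $E'$, giving a parabolic bundle $(E', V'_{\bullet})$ of type $(\bm', \ba')$ on $(X, \bp')$. This assignment is manifestly functorial in $(E, V_{\bullet})$.

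The heart of the argument is the stability comparison. Let $F \subset E$ be a subbundle with $s = \rk F$ and $w_{i} = \dim(F|_{p_{i}} \cap V_{i})$, and let $F'$ be its modification. Then $\deg F' = \deg F - (s - w_{k})$, so the inequality $\mu_{\ba'}(F', W'_{\bullet}) < \mu_{\ba'}(E', V'_{\bullet})$ defining $\ba'$-stability reads, after adding $1$ to each side,
\[
	\frac{\deg F + \sum_{i<k} w_{i}a_{i} + w_{k}}{s} < \frac{d + \sum_{i<k} m_{i}a_{i} + m_{k}}{r}.
\]
This is precisely the $a_{k} \to 1$ limit of the $\ba$-stability inequality $\mu_{\ba}(F, W_{\bullet}) < \mu_{\ba}(E, V_{\bullet})$. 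Because the parabolic slopes are affine in $a_{k}$ and there are only finitely many walls, for $a_{k}$ sufficiently close to $1$ the two strict inequalities agree for every $F$; a standard saturation argument reduces testing $\ba'$-stability of $(E', V'_{\bullet})$ to subbundles of $E$. Hence $(E, V_{\bullet})$ is $\ba$-stable if and only if $(E', V'_{\bullet})$ is $\ba'$-stable, and in particular the modification always lands in the stable locus, so the assignment is defined at every point rather than merely generically.

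I would then run the same construction relatively. Over any base $T$ carrying a family $(\cE, \cV_{\bullet})$, with universal flag $\cV_{k} \subset \cE|_{T \times p_{k}}$, set $\cE' := \ker(\cE \to (\cE|_{T \times p_{k}})/\cV_{k})$, the quotient being pushed forward from $T \times p_{k}$. The technical point is that $\cE'$ is flat over $T$, which holds because fiberwise it is an elementary modification of constant colength $r - m_{k}$; granting this, $(\cE', \cV'_{\bullet})$ is a $T$-family of $\ba'$-stable parabolic bundles of type $(\bm', \ba')$ by the comparison above. Applying this to the universal family over $\cM(r, L, \bm, \ba)$ yields a morphism of stacks into the $\ba'$-semistable locus of $\cM_{(X, \bp')}(r, L(-(r-m_{k})p_{k}), \bm')$; composing with the map to its good moduli space and invoking the universal property of $\cM(r, L, \bm, \ba) \to \rM(r, L, \bm, \ba)$ factors the composite and produces $\pi_{1}$.

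The main obstacle is the stability comparison of the second step: verifying that the two slope inequalities coincide in the limit $a_{k} \to 1$ and that it suffices to test on subbundles of $E$, together with the relative flatness needed to promote the pointwise modification to an honest family and hence to a morphism of the good moduli spaces.
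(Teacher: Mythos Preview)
Your proposal is correct and follows essentially the same approach as the paper: both construct $E'$ as the elementary modification $\ker(E \to E|_{p_k}/V_k)$ and then compare parabolic slopes, the paper by saturating a subbundle $F' \subset E'$ to $F \subset E$ and writing the difference of slopes as a multiple of $(1-a_k)$, and you by going from $F$ to $F' = F \cap E'$ and passing to the limit $a_k \to 1$; these are the same computation read in opposite directions, with your ``standard saturation argument'' being exactly the step the paper makes explicit. The paper omits the relative/flatness discussion you include and only asserts semistability of $(E', V'_\bullet)$ (which suffices for the map to the good moduli space), whereas your ``if and only if'' is a harmless strengthening.
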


\begin{proof}
It is sufficient to construct a morphism 
\[
	\cM_{(X, \bp)}(r, L, \bm, \ba) \to \cM_{(X, \bp')}(r, L(-(r-m_{k})p_{k}), \bm', \ba')
\]
between algebraic stacks. 

Let $\widetilde{\bm} = (\widetilde{m}_{i})$ be a multiplicity such that $\widetilde{m}_{i} = m_{i}$ for $1 \le i \le k-1$ and $\widetilde{m}_{k} = r$. By Example \ref{ex:topandzerodimflags}, there is a functorial isomorphism $\cM_{(X, \bp)}(r, L(-(r-m_{k})p_{k}), \widetilde{\bm}, \ba) \cong \cM_{(X, \bp')}(r, L(-(r-m_{k})p_{k}), \bm', \ba')$. Thus it is sufficient to show that there is a morphism 
\[
	\cM(r, L, \bm, \ba) \to \cM(r, L(-(r-m_{k})p_{k}), \widetilde{\bm}, \ba).
\]

For a stable bundle $(E, V_{\bullet}) \in \cM(r, L, \bm, \ba)$, let $E'$ be the kernel of the restriction map $E \to E|_{p_{k}} \to E|_{p_{k}}/V_{p_{k}}$. Then for each $i \ne k$, $E'|_{p_{i}}$ can be identified with $E|_{p_{i}}$. Set $V_{i}' = V_{i}$ under this identification. On the other hand, the restriction $f : E'|_{p_{k}} \to E|_{p_{k}}$ is a linear map with image $V_{k}$. We set $V_{k}' := f^{-1}(V_{k}) = E'|_{p_{k}}$. Then we obtain a parabolic bundle $(E', V_{\bullet}') \in \cM(r, L(-(r-m_{k})p_{k}), \widetilde{\bm}, \ba)$. Thus, we have a morphism 
\begin{equation}\label{eqn:generalizedHecke}
\begin{split}
\cM(r, L, \bm, \ba) &\to \cM(r,L(-(r-m_{k})p_{k}), \widetilde{\bm})\\
(E, V_{\bullet}) & \mapsto (E', V_{\bullet}').
\end{split}
\end{equation}

We claim that $(E', V_{\bullet}')$ is $\ba$-semistable. Then the morphism in Equation \eqref{eqn:generalizedHecke} factors through $\cM(r,L(-(r-m_{k})p_{k}), \widetilde{\bm}, \ba)$. 

Suppose not. Then there is a parabolic subbundle $(F', W_{\bullet}')$ of $(E', V_{\bullet}')$ such that $\mu(F', W_{\bullet}') > \mu(E', V_{\bullet}')$. Let $\rk F' = s$, $\deg F' = e$, and $n_{i} = \dim W_{i}'$. Note that $n_{k} = s$. 

Set $d = \det L$. Then
\begin{equation}\label{eqn:slopecomparison}
\begin{split}
	\mu(E, V_{\bullet}) - \mu(E', V_{\bullet}') &= \frac{d + \sum m_{i}a_{i}}{r} - \frac{d - (r-m_{k}) + \sum_{i \ne k}m_{i}a_{i} + ra_{k}}{r}\\ 
	&= \frac{(r-m_{k})(1-a_{k})}{r}.
\end{split}
\end{equation}

In general, $F'$ is not a subbundle of $E$. But there is a subbundle $F$ of $E$ such that $F/F'$ is a sheaf supported on $p_{k}$ and $\dim (F/F')|_{p_{k}} = s - c$, where $c := \dim F|_{p_{k}} \cap V_{p_{k}}$. For the induced parabolic subbundle $(F, W_{\bullet})$ of $(E, V_{\bullet})$, 
\begin{equation}\label{eqn:slopecomparisonsubbundle}
\begin{split}
	\mu(F, W_{\bullet}) - \mu(F', W_{\bullet}') &= \frac{e + (s-c) + \sum_{i\ne k}a_{i}n_{i} + a_{k}c}{s} - \frac{e + \sum_{i \ne k}a_{i}n_{i} + a_{k}s}{s}\\
	&= \frac{(s-c)(1-a_{k})}{s}.
\end{split}
\end{equation}

By combining \eqref{eqn:slopecomparison} and \eqref{eqn:slopecomparisonsubbundle}, we have 
\[
	\mu(E, V_{\bullet}) - \mu(F, W_{\bullet}) = \mu(E', V_{\bullet}') - \mu(F', W_{\bullet}') + (1-a_{k})\left(\frac{r-m_{k}}{r} - \frac{s - c}{s}\right).
\]
Note that $\mu(E', V_{\bullet}') - \mu(F', W_{\bullet}')$ is independent from $a_{k}$, as the coefficient of $a_{k}$ in each term is one. Thus, if $a_{k}$ is sufficiently close to one, then the last term is negligible. By the assumption, $\mu(E', V_{\bullet}') - \mu(F', W_{\bullet}') < 0$ and hence the left hand side is also negative. It violates the stability of $(E, V_{\bullet})$ and obtain a contradiction. 
\end{proof}

\begin{remark}\label{rmk:Hecke}
The morphism in Proposition \ref{prop:generalizedHeckemodification} can be understood as a generalized Hecke correspondence. When $d = k = 1$ and $m = r-1$, up to taking a dual bundle, we obtain the classical Hecke correspondence in the sense of \cite[Section 4]{NR75}. A difference in $d > 1$ case is that $\rM(r, L, m, a)$ does not admit morphisms to both $\rM(r, L)$ and $\rM(r, L(-x))$, so we need a birational modification on $\rM(r, L, m, a)$. It can be explained in terms of parabolic wall-crossing, which will be explained in Section \ref{sec:wallcrossing} below. 
\end{remark}

\section{Wall-crossing}\label{sec:wallcrossing}

In this section, we review how the moduli space $\bM(r, L, \bm, \ba)$ changes as $\ba$ varies. 

\subsection{General theory}

Let $k$ be the number of parabolic points. Recall that a parabolic weight is, under our restrictive setting that each parabolic point has a single parabolic flag, a length $k$ sequence of rational number $\ba = (a_{i})$ with $0 < a_{i} < 1$. The closure of the set of parabolic weights is $[0, 1]^{k} \subset \RR^{k}$. 

There is a wall-chamber decomposition of $[0, 1]^{k}$. A parabolic bundle $(E, V_{\bullet}) \in \bM(r, L, \bm, \ba)$ is strictly semi-stable if and only if there is a maximal destabilizing subbundle $(F, W_{\bullet})$ such that $\mu(F, W_{\bullet}) = \mu(E, V_{\bullet})$. More explicitly, this is true only if
\begin{equation}\label{eqn:wallequation}
	\frac{e+\sum n_{i}a_{i}}{s} = \frac{d+\sum m_{i}a_{i}}{r}
\end{equation}
for some $0 < s < r$, $e \in \ZZ$, and $\bn = (n_{i})$. Here $s$ is the rank, $d$ is the degree, and $\bn$ is the multiplicity of $(F, W_{\bullet})$. So we require that $n_{i} \le \mathrm{min}\;\{s, m_{i}\}$. Let $\Delta(s, e, \bn)$ be the set of weights that satisfy \eqref{eqn:wallequation}. Note that this is an intersection of a hyperplane and $[0, 1]^{k}$. We call $\Delta(s, e, \bn)$ a \emph{wall} if it is nonempty. We also obtain 
\begin{equation}\label{eqn:wallduality}
	\Delta(s, e, \bn) = \Delta(r-s, d-e, \bm - \bn).
\end{equation}
Note that $\Delta(s, e, \bn) = \Delta(ks, ke, k\bn)$ if $ks < r$ for some $k > 1$. We call such a wall a \emph{multiple wall}, and otherwise, it is a \emph{simple wall}. A wall $\Delta(s, e, \bn)$ is simple if and only if $\{s, e, n_{i}\}$ are coprime and $\{r-s, d-e, m_{i} - n_{i}\}$ are coprime. 

The stability changes only if a parabolic weight $\ba$ lies on one of the walls. So for each open chamber $C \subset [0, 1]^{k}$, for any $\ba, \ba' \in C$, $\rM(r, L, \bm, \ba) \cong \rM(r, L, \bm, \ba')$. The stability coincides with the semistability if $\ba \in (0, 1)^{k} \setminus \bigcup \Delta(s, e, \bn)$. 

Let 
\[
	\xymatrix{\bM(r, L, \bm, \ba^{-}) \ar@{<-->}[rr] \ar[rd]_{\pi_{-}}&& \bM(r, L, \bm, \ba^{+}) \ar[ld]^{\pi_{+}}\\
	&\bM(r, L, \bm, \ba)}
\]
be a wall-crossing. Suppose that $\ba$ is a general point of $\Delta(s, e, \bn)$, and $\ba^{-}$ and $\ba^{+}$ are two very close weights on the opposite chambers. The contraction maps $\pi_{\pm}$ are birational surjections. Let $Y^{\pm}$ be the exceptional locus on $\bM(r, L, \bm, \ba^{\pm})$ and let $Y := \pi_{\pm}(Y^{\pm})$. The subvarieties $Y^{\pm}$ are called the \emph{wall-crossing centers}. For $(E, V_{\bullet}) \in Y^{+}$, there is a unique maximal destabilizing subbundle $(E^{-}, V_{\bullet}^{-}) \in \rM(s, e, \bn, \ba)$, which fits into an exact sequence 
\[
	0 \to (E^{-}, V_{\bullet}^{-}) \to (E, V_{\bullet}) \to (E^{+}, V_{\bullet}^{+}) \to 0
\]
of parabolic bundles. The map $\pi_{-}$ is restricted to the map $Y^{-} \to Y$, which sends $(E, V_{\bullet})$ to $((E^{-}, V_{\bullet}^{-}), (E^{+}, V_{\bullet}^{+}))$. Conversely, if $ x := ((E^{-}, V_{\bullet}^{-}), (E^{+}, V_{\bullet}^{+}))$ is a general point in $Y$ so that both $(E^{-}, V_{\bullet}^{-})$ and $(E^{+}, V_{\bullet}^{+})$ are stable, then the fiber $\pi_{-}^{-1}(x)$ is a projective space $\PP \Ext^{1}((E^{+}, V_{\bullet}^{+}), (E^{-}, V_{\bullet}^{-}))$ (\cite[Lemma 1.4]{Yok95}). If $(E, V_{\bullet})$ is in a unique irreducible component of $Y^{-}$, the image of the component is isomorphic to $\rM(s, e, \bn, \ba) \times_{\mathrm{Pic}(X)}\rM(r-s, d-e, \bm - \bn, \ba)$. 

For our purpose, we need a lower bound of the codimension of $Y^{\pm}$. Observe that the parabolic bundles in $Y^{-}$ are stable with respect to $\ba^{-}$, but unstable with respect to $\ba^{+}$. Thus, $Y^{-}$ parametrizes unstable parabolic bundles with respect to some weight. The codimension of an unstable locus is estimated in \cite{Sun00}. For an outline of the proof, see also \cite[Section 3.2]{MY20}.

\begin{theorem}[\protect{\cite[Proposition 5.1]{Sun00}}]\label{thm:codimunstable}
In $\cM(r, L, \bm)$, the codimension of the unstable locus is at least $(r-1)(g-1) + 1$. 
\end{theorem}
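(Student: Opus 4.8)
The plan is to stratify the unstable locus by its parabolic Harder--Narasimhan (HN) type and to bound the codimension of each stratum from below, so that the global bound is the minimum of these codimensions. Fix the weight $\ba$ with respect to which stability is measured. Any $\ba$-unstable parabolic bundle $(E, V_\bullet)$ carries a canonical parabolic HN filtration $0 = F_0 \subset F_1 \subset \cdots \subset F_\ell = E$ whose successive parabolic quotients $P_1, \ldots, P_\ell$ are $\ba$-semistable with strictly decreasing parabolic slopes $\mu(P_1) > \cdots > \mu(P_\ell)$. Grouping the unstable bundles by the discrete invariant $\tau = (\rk P_i, \deg P_i, \text{multiplicities of } P_i)_i$ produces a locally closed stratification of the unstable locus into substacks $\cM^\tau$, and it suffices to show $\codim \cM^\tau \ge (r-1)(g-1)+1$ for every $\tau$.

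First I would compute the codimension of a stratum by deformation theory. Since the HN filtration is canonical, the stack $\cM^\tau$ is the image of the stack of $\tau$-filtered parabolic bundles under a map that is injective on points, and its normal space at $(E, V_\bullet)$ is identified with the filtration-breaking directions $\bigoplus_{i<j} \Ext^1_{\mathrm{par}}(P_i, P_j)$. Because $\mu(P_i) > \mu(P_j)$ for $i<j$, parabolic semistability of the graded pieces forces $\mathrm{Hom}_{\mathrm{par}}(P_i, P_j) = 0$, so $\dim \Ext^1_{\mathrm{par}}(P_i, P_j) = -\chi_{\mathrm{par}}(P_i, P_j)$ and hence
\[
	\codim \cM^\tau = \sum_{i<j} \bigl(-\chi_{\mathrm{par}}(P_i, P_j)\bigr).
\]
Fixing the determinant does not affect this count, since the trace part of the deformation complex is common to the stratum and to the ambient stack and cancels. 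By parabolic Riemann--Roch each summand splits as $-\chi_{\mathrm{par}}(P_i, P_j) = r_i r_j (g-1) + D_{ij}$, where $r_i = \rk P_i$ and $D_{ij}$ is the integral degree-and-weight contribution governed by the parabolic slope gap $\mu(P_i) - \mu(P_j) > 0$.

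It then remains to minimize $\sum_{i<j}\bigl(r_ir_j(g-1) + D_{ij}\bigr)$ over all admissible $\tau$. The genus term is bounded below by $(g-1)\sum_{i<j}r_ir_j = \tfrac{g-1}{2}\bigl(r^2 - \sum_i r_i^2\bigr) \ge (r-1)(g-1)$, with equality exactly for a two-step filtration of ranks $\{1, r-1\}$. The step I expect to be the main obstacle is showing that the remaining term contributes at least $1$, that is $\sum_{i<j} D_{ij} \ge 1$, uniformly and in particular in this extremal rank configuration. In the non-parabolic setting this is immediate: $D_{ij} = r_j \deg P_i - r_i \deg P_j$ is a positive integer by the slope inequality, giving $\codim \ge (r-1)(g-1)+1$ at once. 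In the parabolic setting the underlying degree difference may vanish or even be negative, the destabilization being produced by the weights, so one must combine the strict parabolic inequality $r_j\pdeg(P_i) - r_i \pdeg(P_j) > 0$ with the integrality of $\dim\Ext^1_{\mathrm{par}}$ and the flag-incidence corrections in $D_{ij}$ to conclude $\sum_{i<j} D_{ij} \ge 1$. Once this lower bound is established, the two contributions add to give $\codim \cM^\tau \ge (r-1)(g-1)+1$ for every type, and taking the minimum over $\tau$ yields the assertion.
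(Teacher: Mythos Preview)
The paper does not supply its own proof of this theorem: it is stated as a citation of \cite[Proposition 5.1]{Sun00}, with a pointer to \cite[Section 3.2]{MY20} for an outline, and is then used as a black box. So there is no in-paper argument to compare against.

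That said, your outline is the standard Shatz--Atiyah--Bott stratification argument and is, as far as it goes, the same strategy Sun uses. Your identification of the normal space to the HN stratum with $\bigoplus_{i<j}\Ext^1_{\mathrm{par}}(P_i,P_j)$ and the vanishing of $\mathrm{Hom}_{\mathrm{par}}(P_i,P_j)$ for $i<j$ are correct, and the genus term bound $\sum_{i<j} r_i r_j(g-1)\ge (r-1)(g-1)$ is the right combinatorial minimum. The one place where your sketch is genuinely incomplete is exactly where you flag it: in the parabolic setting the integer $D_{ij}$ is not simply $r_j\deg P_i - r_i\deg P_j$ but also carries a flag-incidence correction (from the definition of $\mathrm{ParHom}$), and one must check that the total $\sum_{i<j} D_{ij}$ is still a strictly positive integer. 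This does hold, but it requires writing out the parabolic Riemann--Roch term carefully and using that the parabolic slope gap is strictly positive together with the integrality of each $-\chi_{\mathrm{par}}(P_i,P_j)=\dim\Ext^1_{\mathrm{par}}(P_i,P_j)$; in the simplified flag structure of this paper (one subspace per point) the correction is bounded in absolute value by the multiplicities, which makes the estimate tractable. If you want a self-contained write-up you should spell that step out rather than leave it as an expectation, since it is the only nontrivial point beyond the classical Shatz computation.
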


\begin{corollary}\label{cor:codimcenter}
The codimension of the wall-crossing center is at least $(r-1)(g-1) + 1$. In particular, if $g \ge 2$, every wall-crossing is a flip. 
\end{corollary}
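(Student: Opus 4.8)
The plan is to reduce the codimension estimate for the wall-crossing centers to Sun's bound on the unstable locus, Theorem \ref{thm:codimunstable}, and then to read off the flip statement from the numerics. The starting point is the observation recorded just before the statement: a point of $Y^{-}$ corresponds to a parabolic bundle that is $\ba^{-}$-stable but $\ba^{+}$-unstable, and symmetrically a point of $Y^{+}$ corresponds to a bundle that is $\ba^{+}$-stable but $\ba^{-}$-unstable. Writing $U^{\pm} \subset \cM(r, L, \bm)$ for the locus of parabolic bundles that are unstable with respect to the general weight $\ba^{\pm}$, this gives set-theoretic containments $Y^{-} \subseteq U^{+}$ and $Y^{+} \subseteq U^{-}$ once we pass to the relevant open substacks.

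First I would apply Theorem \ref{thm:codimunstable} to the weights $\ba^{+}$ and $\ba^{-}$ separately: in $\cM(r, L, \bm)$, each of $U^{+}$ and $U^{-}$ has codimension at least $(r-1)(g-1)+1$. Since $Y^{-}$ is contained in $U^{+}$ and $Y^{+}$ in $U^{-}$, their codimensions in the stack are at least as large. The second step is to transfer this bound to the good moduli space. As $\ba^{\pm}$ is general, the semistable locus agrees with the stable locus on an open set containing $Y^{\pm}$, and there the good moduli morphism is a $\mathbb{G}_m$-gerbe onto $\bM(r, L, \bm, \ba^{\pm})$; in particular it preserves codimension. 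Combining the two steps yields $\codim Y^{\pm} \ge (r-1)(g-1)+1$.

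For the last assertion only the numerics are needed. Under the standing hypotheses $r \ge 2$ and $g \ge 2$ we have $(r-1)(g-1)+1 \ge 2$, so both exceptional loci $Y^{\pm}$ have codimension at least two. Hence each contraction $\pi_{\pm}$, which is already known to be a birational surjection, is small, that is, an isomorphism in codimension one and in particular not divisorial; by definition this makes the wall-crossing a flip.

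The step I expect to require the most care is the stack-to-space transfer, namely making sure that Sun's estimate --- stated for the full stack of parabolic bundles carrying no stability condition --- applies verbatim to the $\ba^{\pm}$-unstable locus and descends with no loss of codimension. Concretely, one should verify that the containments $Y^{\mp} \subseteq U^{\pm}$ hold at the level of substacks rather than merely on closed points, that restricting $U^{\pm}$ to the open $\ba^{\mp}$-semistable substack does not decrease its codimension, and that Sun's bound is uniform over all destabilizing types $(s, e, \bn)$, so that multiple walls are handled on the same footing. The remaining points are purely numerical.
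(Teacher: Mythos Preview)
Your proposal is correct and follows exactly the approach the paper intends: the paper records, immediately before Theorem~\ref{thm:codimunstable}, that $Y^{-}$ parametrizes parabolic bundles unstable with respect to $\ba^{+}$, and then states the corollary without further proof as a direct consequence of Sun's bound. Your write-up is simply a more careful elaboration of this implicit argument, including the stack-to-space descent that the paper leaves tacit.
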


We say a wall-crossing is \emph{simple} if:
\begin{enumerate}
\item The wall $\Delta(s, e, \bn)$ is a simple wall and;
\item $\ba \in \Delta(s, e, \bn)$ is on a unique wall. 
\end{enumerate}
A simple wall-crossing has an explicit description. The wall-crossing centers $Y^{\pm}$ are irreducible and their image $Y \cong \rM(s, e, \bn, \ba) \times_{\mathrm{Pic}(X)}\rM(r-s, d-e, \bm - \bn, \ba)$ is a smooth variety. Let $(\cE^{-}, \cV_{\bullet}^{-})$ (resp. $(\cE^{+}, \cV_{\bullet}^{+})$) be the Poincar\'e family over $\rM(s, e, \bn, \ba)$ (resp. $\rM(r-s, d-e, \bm- \bn, \ba)$). The standard GIT construction and the descent method imply the existence of Poincar\'e bundle (\cite[Chapter 5]{New78}, \cite[Section 4.6]{HL10}). Then $Y^{-} \cong \PP R^{1}\pi_{- *}\CParHom((\cE^{+}, \cV_{\bullet}^{+}), (\cE^{-}, \cV_{\bullet}^{-}))$ and $Y^{+} \cong \PP R^{1}\pi_{+ *}\CParHom((\cE^{-}, \cV_{\bullet}^{-}), (\cE^{+}, \cV_{\bullet}^{+}))$ (\cite[Section 1]{Yok95}). In particular, they are projective bundles over $Y$. Finally, it is well-known that the blow-up of $\bM(r, L, \bm, \ba^{-})$ along $Y^{-}$ is isomorphic to the blow-up of $\bM(r, L, \bm, \ba^{+})$ along $Y^{+}$. 

\subsection{Main example}

From now on, we focus on one particular case where $k = 2$ and $\bm = (r-1, 1)$, which is our primary interest in this paper. We set $\bp = (x, y)$ and use an appropriate modification of the notation such as $\bm = (m_{x}, m_{y})$ and $\ba = (a_{x}, a_{y})$. 

Let $\Delta(s, e, \bn)$ be a wall on $[0,1]^{2}$ and let $\ba = (a_{x}, a_{y})$ be a general point on it. Let $(E, V_{\bullet}) \in Y \subset \rM(r, L, \bm, \ba)$ be a general polystable parabolic bundle on the wall-crossing center. Then $(E, V_{\bullet}) \cong (F_{1}, W_{1 \bullet}) \oplus (F_{2}, W_{2\bullet})$ and $\mu(E, V_{\bullet}) =\mu(F_{1}, W_{1\bullet}) = \mu(F_{2}, W_{2\bullet})$.

There are two possibilities. First of all, it is possible that one of $F_{i}$'s (say $F_{1}$) has the largest possible intersection with the flags of $E$. That means, $\dim F_{1}|_{x} \cap V_{x} = \dim F_{1}|_{x} = s$ and $\dim F_{1}|_{y} \cap V_{y} = \dim V_{y} = 1$. Then we have an equality 
\[
	\frac{e+sa_{x} + a_{y}}{s} = \frac{d + (r-1)a_{x} + a_{y}}{r},
\]
or equivalently, $sa_{x} + (r-s)a_{y} = sd - re$. The slope of the line on the $(a_{x}, a_{y})$-plane is negative, so we will call the wall a \emph{negative wall}. To intersect with the interior of $[0, 1]^{2}$, it is necessary that $0 < sd - re < r$. Since these walls are $\Delta(s, e, (s, 1)) = \Delta(r-s, d-e, (r-s-1, 0))$, they are simple walls. 

The second case is that $F_{1}$ has the maximum intersection with the flag on $x$, but does not intersect on $y$. In other words, $\dim F_{1}|_{x} \cap V_{x} = \dim F_{1}|_{x}$ and $\dim F_{1}|_{y} \cap V_{y} = 0$. Then we have
\[
	\frac{e+sa_{x}}{s} = \frac{d+(r-1)a_{x} + a_{y}}{r}, 
\]
so $sa_{x} - sa_{y} = sd - re$. Thus, the slope of the wall $\Delta(s, e, (s, 0))$ is one and we call it a \emph{positive wall}. The nonempty intersection with $(0, 1)^{2}$ is equivalent to $-s < sd - re < s$. Since $(r, d) = 1$, $sd - re \ne 0$ and there is no wall passing through the origin. 

See Figure \ref{fig:wallchamber} for an example of the wall-chamber decomposition.

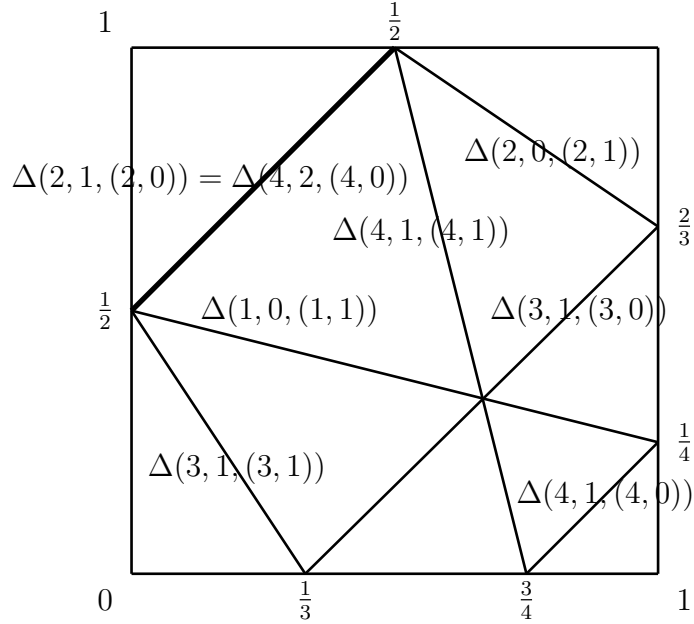
\begin{figure}[!ht]
\begin{tikzpicture}[scale=7]
	\draw[line width = 1pt] (0, 0) -- (1, 0);
	\draw[line width = 1pt] (1, 0) -- (1, 1);
	\draw[line width = 1pt] (0, 0) -- (0, 1);
	\draw[line width = 1pt] (0, 1) -- (1, 1);
	\draw[line width = 1pt] (0.33, 0) -- (0, 0.5);
	\draw[line width = 1pt] (1, 0.25) -- (0, 0.5);
	\draw[line width = 1pt] (0.75, 0) -- (0.5, 1);
	\draw[line width = 1pt] (1, 0.66) -- (0.5, 1);
	\draw[line width = 1pt] (0.75, 0) -- (1, 0.25);
	\draw[line width = 2pt] (0, 0.5) -- (0.5, 1);
	\draw[line width = 1pt] (0.33, 0) -- (1, 0.66);

	\node at (-0.05, -0.05) {$0$};
	\node at (1.05, -0.05) {$1$};
	\node at (-0.05, 1.05) {$1$};
	\node at (-0.05, 0.5) {$\frac{1}{2}$};
	\node at (0.33, -0.05) {$\frac{1}{3}$};
	\node at (0.75, -0.05) {$\frac{3}{4}$};
	\node at (1.05, 0.25) {$\frac{1}{4}$};
	\node at (1.05, 0.66) {$\frac{2}{3}$};
	\node at (0.5, 1.05) {$\frac{1}{2}$};
	\node at (0.2, 0.2) {$\Delta(3, 1, (3, 1))$};
	\node at (0.3, 0.5) {$\Delta(1, 0, (1, 1))$};
	\node at (0.55, 0.65) {$\Delta(4, 1, (4, 1))$};
	\node at (0.8, 0.8) {$\Delta(2, 0, (2, 1))$};
	\node at (0.15, 0.75) {$\Delta(2, 1, (2, 0)) = \Delta(4, 2, (4, 0))$};
	\node at (0.85, 0.5) {$\Delta(3, 1, (3, 0))$};
	\node at (0.9, 0.15) {$\Delta(4, 1, (4, 0))$};
\end{tikzpicture}
\caption{The wall-chamber decomposition for $r = 5$ and $d = 2$}\label{fig:wallchamber}
\end{figure}

\subsection{Mori's program}

The wall-crossing picture can be incorporated with projective birational geometry of $\rM(r, L, \bm, \ba)$ in the nicest way. Let $\ba$ be a general parabolic weight. Then every rational contraction of $\rM(r, L, \bm, \ba)$ can be obtained in terms of wall-crossings, forgetful maps, and generalized Hecke correspondences. 

Recall that $\mathrm{Pic}^{G}(V)$ is the group of linearized line bundles on $V$. Let $\rN^{1, G}(V)_{\RR}$ be the numerical classes of linearized $\RR$-line bundles. 

\begin{lemma}\label{lem:GIT}
Let $G$ be a reductive group. Let $V$ be a normal $\QQ$-factorial projective variety equipped with a linearized $G$-action. Let $L \in \mathrm{Pic}^{G}(V)$ be a linearized ample divisor such that $V^{ss}(L) = V^{s}(L) \ne \emptyset$. Then there is a surjective linear map $\rN^{1, G}(V)_{\RR} \to \rN^{1}(V\git_{L}G)_{\RR}$. 
\end{lemma}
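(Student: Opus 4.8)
The plan is to factor the desired map through the stable locus and descent. Write $U := V^{ss}(L) = V^{s}(L)$ and let $\phi : U \to Y := V \git_{L} G$ be the quotient map; since $V^{ss}(L) = V^{s}(L)$, this is a geometric quotient, the $G$-action on $U$ is proper, and every stabilizer is finite. First I would define the map on linearized line bundles. Given $M \in \Pic^{G}(V)$, restrict it to $U$. By the descent lemma of Kempf (\cite[Chapter 5]{New78}, \cite[Section 4.6]{HL10}), a $G$-linearized line bundle on $U$ descends to $Y$ exactly when the stabilizer of each point acts trivially on the corresponding fibre; as these stabilizers are finite, a sufficiently divisible power $M^{\otimes n}|_{U}$ always descends to an honest line bundle $N_{n}$ on $Y$. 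Setting $\mathrm{desc}(M) := \tfrac{1}{n}[N_{n}] \in \Pic(Y)_{\QQ}$ then yields a homomorphism $\Pic^{G}(V)_{\QQ} \to \Pic(Y)_{\QQ}$, which is linear by construction.

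Next I would check that this descends to numerical classes. If the underlying line bundle of $M$ is numerically trivial on $V$, then $M|_{U}$ has degree zero on every curve contained in $U$; since every curve $\bar{C} \subset Y$ is dominated by a curve $C \subset U$ under $\phi$, comparing degrees along $\phi$ gives $\mathrm{desc}(M) \cdot \bar{C} = 0$. Hence $\mathrm{desc}$ factors through a map $\rN^{1,G}(V)_{\QQ} \to \rN^{1}(Y)_{\QQ}$, and tensoring with $\RR$ produces the sought linear map $\rN^{1,G}(V)_{\RR} \to \rN^{1}(Y)_{\RR}$ (surjectivity being preserved under $\otimes_{\QQ}\RR$).

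The heart of the argument is surjectivity, which I would prove by writing down explicit preimages. Since $\rN^{1}(Y)$ is finitely generated, it suffices to lift each line bundle $\bar{N} \in \Pic(Y)$ rationally. Write $\bar{N} \cong \cO_{Y}(\bar{E})$ for a Cartier divisor $\bar{E}$ on the projective variety $Y$ (for instance a difference of hyperplane sections). Then $\phi^{*}\bar{E}$ is a genuinely $G$-invariant Cartier divisor on $U$, and its closure $D$ in $V$ is a $G$-invariant Weil divisor. Because $V$ is normal and $\QQ$-factorial, some multiple $mD$ is Cartier; the resulting line bundle $\cO_{V}(mD)$ is $G$-invariant, hence admits a $G$-linearization after a further power (the obstruction is a torsion class, lying in $\Pic(G)$, which is finite for reductive $G$). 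This gives a class $M \in \Pic^{G}(V)_{\QQ}$ with $M|_{U}$ equal to a rational power of $\phi^{*}\bar{N}$, so that $\mathrm{desc}(M) = \bar{N}$. Thus $\mathrm{desc}$ is surjective over $\QQ$, and therefore over $\RR$.

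The step I expect to be the main obstacle is the extension from $U$ to $V$: a $G$-linearized line bundle on the open stable locus need not extend to a $G$-linearized line bundle on $V$ on the nose. Two failures occur and are exactly repaired by the hypotheses -- the extended divisor class may fail to be Cartier, which $\QQ$-factoriality corrects at the cost of a harmless multiple, and the extension must then be re-endowed with a $G$-linearization, which succeeds up to a power since $\Pic(G)$ is finite. Starting from the pullback $\phi^{*}\bar{N}$, whose divisor is patently $G$-invariant, is what makes both repairs go through cleanly; attempting to extend an arbitrary linearized bundle on $U$ and only afterwards arrange $G$-invariance would be considerably more delicate.
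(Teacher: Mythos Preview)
Your proposal is correct and follows essentially the same route as the paper: construct the forward map by restricting to $U$ and applying Kempf's descent lemma (using finiteness of stabilizers to kill the obstruction after a power), then prove surjectivity by pulling back, extending via $\QQ$-factoriality, and linearizing via normality together with finiteness of $\Pic(G)$. The only real difference is that you spell out the well-definedness on numerical classes and work with divisors rather than line bundles for the extension step, which makes the use of $\QQ$-factoriality a bit more transparent than in the paper's argument.
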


\begin{proof}
Let $E \in \rN^{1,G}(V)_{\QQ}$. Then $E$ is represented by a linearized $\QQ$-line bundle $E$. By taking some power, we may assume that $E$ is a genuine line bundle. The coincidence of the stability and the semistability implies that for each point $x \in V^{ss}(L)$, the stabilizer is a finite group. Thus, if we take some power again, we may assume that the stabilizer acts on each fiber of $E$ trivially. Now by Kempf's descent lemma (\cite[Theorem 2.3]{DN89}), $E$ descends to a line bundle $E\git_{L}G$ over $V\git_{L}G$. This map can be linearly extended and completed, so we have a desired linear map $\rN^{1, G}(V)_{\RR} \to \rN^{1}(V \git_{L}G)_{\RR}$. It is surjective, since for any line bundle $F$ on $V\git_{L}G$, its pull-back $\widetilde{F}$ on $V^{ss}(L)$ is a line bundle. By the $\QQ$-factoriality of $V$, after taking some power, it can be extended to a line $\widetilde{F}$ bundle on $V$ (not necessarily uniquely, depending of the codimension of $V \setminus V^{ss}(L)$). Since $V$ is normal, some power of $\widetilde{F}$ admits a linearization (\cite[Corollary 1.6]{MFK94}). So we obtain an element in $\rN^{1, G}(V)_{\RR}$. 
\end{proof}

\begin{proposition}\label{prop:weightanddivisor}
Let $[0, 1]^{k}$ be the closure of the set of parabolic weights. Let $\ba \in (0, 1)^{k}$ be a general parabolic weight. Then there is a linear isomorphism between a cone over $[0, 1]^{k}$ and the effective cone $\Eff(\rM(r, L, \bm, \ba))$ of divisors. 
\end{proposition}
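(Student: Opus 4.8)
The plan is to realize $\rM(r, L, \bm, \ba)$ uniformly, for all general $\ba$, as a GIT quotient and to read off $\Eff(\rM(r, L, \bm, \ba))$ from the space of linearizations by means of the descent map of Lemma \ref{lem:GIT}. Fix a sufficiently large level $n$ and present the moduli space as a quotient $V \git_{\cL} G$, where $V$ is the flag bundle of $\bm$-flags at $\bp$ over the stable locus of a Quot scheme parametrizing rigidified bundles $E$ of rank $r$ and determinant $L$, and $G$ is the associated reductive group, with $V^{ss}(\cL) = V^{s}(\cL) \neq \emptyset$ for general $\ba$. Inside $\rN^{1, G}(V)_{\RR}$ lies a distinguished $(k+1)$-dimensional subspace $\Pi$ spanned by the level (determinant-of-cohomology) line bundle $\cL_{0}$ and the $k$ Pl\"ucker line bundles $\cL_{1}, \dots, \cL_{k}$ of the flags. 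Under the classical dictionary between parabolic semistability and GIT stability, the linearizations producing $\rM(r, L, \bm, \ba)$ are exactly those of the form $t\cL_{0} + \sum_{i} b_{i}\cL_{i}$ with $t > 0$ and $0 < b_{i} < t$; identifying $\Pi \cong \RR^{k+1}$ via this basis, that region is precisely the interior of the cone over $[0,1]^{k}$, with $a_{i} = b_{i}/t$. The desired linear isomorphism will be the descent map restricted to $\Pi$.

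First I would pin down $\rN^{1}(\rM(r, L, \bm, \ba))_{\RR}$. For a sufficiently small general weight $\epsilon$, Example \ref{ex:smallweight} identifies $\rM(r, L, \bm, \epsilon)$ with the iterated Grassmannian bundle $\times_{\rM(r, L)}\Gr(m_{i}, \cE_{p_{i}})$ over $\rM(r, L)$; since $\Pic(\rM(r, L))$ has rank one, the Grassmannian bundle formula gives Picard number $k+1$. By Corollary \ref{cor:codimcenter} every wall-crossing between general weights is a flip, hence an isomorphism in codimension one, so $\rN^{1}$ is canonically the same $(k+1)$-dimensional space for all general $\ba$. Applying Lemma \ref{lem:GIT} chamber by chamber and extending the descended line bundles across the flipping loci (of codimension $\geq 2$) yields a single linear map $D \colon \Pi \to \rN^{1}(\rM(r, L, \bm, \ba))_{\RR}$. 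As the two spaces have equal dimension, it suffices to prove $D$ injective: intersecting against lines contained in the $i$-th Grassmannian factor of a fibre of the forgetful morphism detects $D(\cL_{i})$ and annihilates $D(\cL_{j})$ for $j \neq i$, while $D(\cL_{0})$ is a positive multiple of $\pi^{*}\Theta$ and vanishes on all such fibres; hence $D(\cL_{0}), \dots, D(\cL_{k})$ are linearly independent.

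It remains to show that $D$ maps the cone over $[0,1]^{k}$ onto $\Eff(\rM(r, L, \bm, \ba))$. For the inclusion into the effective cone, an interior linearization descends to the GIT-ample class of the corresponding model $\rM(r, L, \bm, \ba')$, which transports through the flips to a movable, hence effective, class; the facet $a_{i} = 0$ descends to the pullback of an ample class along the forgetful contraction of Example \ref{ex:forgetfulmap}, and the facet $a_{i} = 1$ to the pullback along the generalized Hecke contraction of Proposition \ref{prop:generalizedHeckemodification}, both semiample and therefore effective. For the reverse inclusion I would argue that the $2k$ inequalities $0 \leq a_{i} \leq 1$ are exactly the walls of $\Eff$: the forgetful (resp. Hecke) contraction is of fibre type with positive-dimensional fibres swept out by curves, and on such a covering curve the class corresponding to $a_{i} < 0$ (resp. $a_{i} > 1$) has negative degree and so is not pseudoeffective. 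Since the cube $[0,1]^{k}$ is cut out by precisely these $2k$ inequalities, $\Eff(\rM(r, L, \bm, \ba))$ coincides with the cone over it.

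The main obstacle is the reverse inclusion $\Eff \subseteq \im D$, namely the non-effectivity of classes just outside the cube. The delicate points are exhibiting the covering families of curves in the fibres of the forgetful and generalized Hecke contractions with the asserted negative intersections, and checking that no further walls of $\Eff$ intrude from the interior; here the wall-crossing analysis of Section \ref{sec:wallcrossing}, together with the explicit geometry of the two boundary contractions, is what makes the argument go through.
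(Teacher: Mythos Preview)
Your approach is essentially the same as the paper's: realize all the moduli spaces as GIT quotients of a single master space, use Lemma~\ref{lem:GIT} to descend the $(k+1)$-dimensional space of linearizations to $\rN^{1}(\rM(r,L,\bm,\ba))_{\RR}$, and identify the boundary facets $a_{i}=0$ and $a_{i}=1$ with the forgetful map (Example~\ref{ex:forgetfulmap}) and the generalized Hecke contraction (Proposition~\ref{prop:generalizedHeckemodification}).

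The one genuine difference is in the reverse inclusion $\Eff \subseteq D(\text{cone over }[0,1]^{k})$. You propose to find covering families of curves in the fibres of the boundary contractions and check negative intersection with classes just outside the cube. The paper avoids this dual-curve bookkeeping entirely: it simply observes that each boundary facet gives a rational contraction with positive-dimensional general fibre, so the associated divisor lies on $\partial\Eff$; since interior weights give birational models (hence big divisors, in the interior of $\Eff$), and since $\Eff$ is convex, the image of the cone over $[0,1]^{k}$ must be all of $\Eff$. This is cleaner than your curve argument and dispenses with the worry you flag at the end about ``further walls of $\Eff$ intruding from the interior''---convexity plus the boundary-to-boundary, interior-to-interior matching handles it in one stroke. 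Your approach would also work, but the paper's convexity argument is the shorter path.
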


\begin{proof}
By the standard construction of the moduli space of parabolic bundles as an $\SL_{N}$ GIT quotient (\cite[Section 4]{MS80}), all of them can be constructed as a GIT quotient of the same smooth variety $Z$ with various linearizations, and the parabolic weights depend linearly on the choice of linearization. In particular, there is a linear embedding $(0, 1)^{k} \to \rN^{1, \SL_{N}}(Z)_{\RR}$. Lemma \ref{lem:GIT} induces a linear embedding $(0, 1)^{k} \to \rN^{1}(Z\git_{L}\SL_{N})_{\RR} = \rN^{1}(\rM(r, L, \bm, \ba))_{\RR}$. Since $Z$ is normal, the forgetful map $\rN^{1, \SL_{N}}(Z)_{\RR} \to \rN^{1}(Z)_{\RR}$ is surjective (\cite[Corollary 1.6]{MFK94}). Since the character group of $\SL_{N}$ is trivial, it is injective. So $\rN^{1, \SL_{N}}(Z)_{\RR} = \rN^{1}(Z)_{\RR}$. The map $\rN^{1}(Z)_{\RR} \to \rN^{1}(Z\git_{L}\SL_{N})_{\RR}$ is bijective because the unstable locus has codimension $\ge 2$ (Theorem \ref{thm:codimunstable}). Therefore the map $(0, 1)^{k} \to \rN^{1}(\rM(r, L, \bm, \ba))_{\RR}$ is also a linear embedding. Thus, there is an embedding of the cone over $(0, 1)^{k}$ to $\Eff(\rM(r, L, \bm, \ba))$ which maps $\ba'$ to its associated line bundle $L_{\ba'}$. 

Now we show that the cone over the closure $[0, 1]^{k}$ of $(0, 1)^{k}$ can be identified with $\Eff(\rM(r, L, \bm, \ba))$. Recall that for any effective divisor $D$ (or equivalently, a line bundle $\cO(D)$) of a normal $\QQ$-factorial projective variety $V$, we may associate a rational contraction $V \dashrightarrow V(D)$ where 
\[
	V(D) := \proj \bigoplus_{m \ge 0}\rH^{0}(V, \cO(mD)).
\]
Conversely, any rational contraction of $V$ can be obtained in this way. If $D \in \mathrm{int}\;\Eff(V)$, then $V \dashrightarrow V(D)$ is a birational map and if $D \in \partial \Eff(V)$, $V \dashrightarrow V(D)$ is a contraction with positive dimensional general fibers. 

Note that on the boundary $\partial [0, 1]^{k}$, one of the coordinates must be either zero or one. In the first case, we can obtain a rational contraction $\rM(r, L, \bm, \ba) \to \rM(r, L, \bm', \ba')$ in Example \ref{ex:forgetfulmap}. In the latter case, we have a generalized Hecke modification in Proposition \ref{prop:generalizedHeckemodification}. All of them are contractions with positive dimensional fibers, so they must be associated with divisors on the boundary of the effective cone. Since the effective cone is convex, it is sufficient to obtain the result. 
\end{proof}

\section{Nef vector bundles}\label{sec:nef}

Let $\cE$ be the normalized Poincar\'e bundle over $X \times \rM(r, L)$. Recall that for any $x \in X$, $\cE_{x}$ is the vector bundle on $\rM(r, L)$ obtained by restricting $\cE$ on $x \times \rM(r, L)$. In this section, we prove the nefness of $\cE_{x}$. 

\begin{theorem}\label{thm:nef}
The restricted Poincar\'e bundle $\cE_{x}$ is a strictly nef vector bundle. 
\end{theorem}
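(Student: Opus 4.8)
The plan is to realize $\PP(\cE_x)$ as a moduli space of parabolic bundles and to use the wall-crossing picture from Section~\ref{sec:wallcrossing} to control the nef cone. By Example~\ref{ex:smallweight}, with a single parabolic point $x$ and multiplicity $m_x = r-1$, the moduli space $\rM(r, L, r-1, \epsilon)$ for a sufficiently small general weight $\epsilon$ is the Grassmannian bundle $\Gr(r-1, \cE_x)$ over $\rM(r, L)$. Since a hyperplane in the $r$-dimensional fiber is the same as a one-dimensional quotient, $\Gr(r-1, \cE_x) \cong \PP(\cE_x)$ in the convention of one-dimensional quotients fixed in the conventions. Thus $P := \PP(\cE_x)$ carries the projection $\pi : P \to \rM(r, L)$ with relative tautological quotient line bundle $\cO_P(1)$ satisfying $\pi_* \cO_P(1) = \cE_x$, and $\Pic(P)_\RR$ is spanned by $\pi^* \Theta$ and $\cO_P(1)$. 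Showing $\cE_x$ is strictly nef is equivalent to showing $\cO_P(1)$ is strictly nef on $P$, i.e. it has positive degree on every irreducible curve.

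First I would identify the two boundary rays of the nef (equivalently, effective, since $P$ is Fano type over $\rM(r,L)$) cone of $P$ using Proposition~\ref{prop:weightanddivisor}: the one-parameter family of weights $a_x \in (0,1)$ gives a cone over $[0,1]$ that is linearly isomorphic to $\Eff(P) = \overline{\mathrm{Nef}}(P)$ (a two-dimensional cone since $\rho(P) = 2$). The weight $a_x \to 0^+$ corresponds to the forgetful contraction $\pi : P \to \rM(r,L)$ of Example~\ref{ex:forgetfulmap}, whose associated extremal ray is $\pi^*\Theta$. The weight $a_x \to 1^-$ corresponds to the generalized Hecke contraction $\pi_1 : P \to \rM(r, L(-x))$ of Proposition~\ref{prop:generalizedHeckemodification} (with $m_x = r-1$, so $r - m_x = 1$), and this gives the second extremal nef ray, a class of the form $\cO_P(1) - \lambda\, \pi^*\Theta$ for some rational $\lambda \ge 0$ (or possibly $\cO_P(1)$ twisted). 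The main computational step is then to pin down this second ray numerically and to relate it to $\cO_P(1)$.

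To carry out the numerics I would compute intersection numbers against curves in the two contracted families. The fibers of $\pi$ are projective spaces $\PP^{r-1}$ on which $\cO_P(1)$ restricts to the hyperplane class and $\pi^*\Theta$ is trivial; a line in such a fiber has $\cO_P(1)$-degree $1$ and $\pi^*\Theta$-degree $0$, which fixes one edge. For the Hecke side I would use the description of $\pi_1$ and its fiber geometry: the fibers of the other contraction (or the curves contracted by it) let me compute the $\cO_P(1)$- and $\pi^*\Theta$-degrees of a generator of that extremal ray. Since both contractions $\pi$ and $\pi_1$ are genuine morphisms to projective varieties (not just rational maps), the two edges of $\overline{\mathrm{Nef}}(P)$ are the pullbacks $\pi^*\Theta$ and $\pi_1^* \Theta'$, both nef, and $\cO_P(1)$ lies in the interior of this cone — being a positive combination $\cO_P(1) = \alpha\, \pi^*\Theta + \beta\, \pi_1^*\Theta'$ with $\alpha, \beta > 0$ once the determinant normalization $\det \cE_x = \Theta^\ell$ is fed in to solve for the coefficients. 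Strictness then follows because any irreducible curve $C \subset P$ is contracted by at most one of $\pi, \pi_1$, so at least one of the two nonnegative terms $\alpha\,(\pi^*\Theta \cdot C)$, $\beta\,(\pi_1^*\Theta' \cdot C)$ is strictly positive, giving $\cO_P(1) \cdot C > 0$.

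I expect the main obstacle to be the precise numerical identification of the Hecke ray and verifying that $\cO_P(1)$ lands strictly inside the cone with both coefficients $\alpha, \beta$ strictly positive, rather than on a boundary edge. This requires an honest intersection-theoretic computation on $P$ relating $\cO_P(1)$, $\pi^*\Theta$, and the class $\pi_1^*\Theta'$, using the normalization $\det \cE_x \cong \Theta^\ell$ together with the relation $\ell d \equiv 1 \pmod r$ to get the correct rational coefficient. A secondary subtlety is confirming that $\overline{\mathrm{Nef}}(P) = \Eff(P)$ and that both extremal maps are morphisms (so their pullbacks are genuinely nef and not merely movable); this uses Corollary~\ref{cor:codimcenter} (for $g \ge 2$ all wall-crossings are flips, so no divisor is contracted along walls) and the Fano-type property of $P$, ensuring the cone is rational polyhedral and the two identified rays are exactly its edges.
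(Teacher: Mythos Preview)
Your proposal has a genuine gap, rooted in two related misreadings.

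First, in this paper ``strictly nef'' means nef but not ample: the proof concludes with ``$\cE_x$ is strictly nef because $\cO_{\PP(\cE_x)}(1)$ is not ample.'' You instead set out to prove that $\cO_P(1)\cdot C>0$ for every curve $C$, i.e.\ that $\cO_P(1)$ lies in the interior of the nef cone. That statement is false: the paper explicitly exhibits a curve $C$ (Definition~\ref{def:twocurves}, Lemma~\ref{lem:intersection2}) with $\cO_P(1)\cdot C=0$, and Corollary~\ref{cor:nefcone} records that $\cO_P(1)$ is an extremal nef ray. So your target conclusion that $\cO_P(1)=\alpha\,\pi^*\Theta+\beta\,\pi_1^*\Theta'$ with $\alpha,\beta>0$ cannot hold; in fact $\alpha=0$ (for $d=1$ one has $\cO_P(1)=\pi_1^*\Theta_{\rM(r,L(-x))}$ exactly, by Lemma~\ref{lem:pullbackofample} with $\ell=1$).

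Second, for $d>1$ the generalized Hecke map $\pi_1$ is \emph{not} a morphism from $\rM(r,L,r-1,\epsilon)$: Proposition~\ref{prop:generalizedHeckemodification} requires the weight to be close to $1$, and one must first cross walls (flips) to reach that chamber. Your argument that Corollary~\ref{cor:codimcenter} forces $\overline{\mathrm{Nef}}(P)=\Eff(P)$ is incorrect; that corollary only says the wall-crossings are flips, which precisely means there are interior walls and the nef cone is strictly smaller than the effective cone when $d>1$ (see Remark~\ref{rmk:Hecke} and Remark~\ref{rmk:d=1}). So $\pi_1^*\Theta'$ is not a nef class on $P$, and the two-morphism argument collapses.

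The paper's route is different: it locates the \emph{first} wall as $\epsilon$ increases, at $a=1/\ell$ (Lemma~\ref{lem:1stwallcrossing}), giving a genuine small contraction $\pi_-:P\to\rM(r,L,r-1,1/\ell)$. A direct computation on an extension family (Lemma~\ref{lem:intersection2}) shows $\cO_P(1)\cdot C=0$ for a line $C$ in a fiber of $\pi_-$, forcing $\cO_P(1)$ to be a positive multiple of $\pi_-^*A$ and hence semi-ample and nef, while the existence of $C$ shows it is not ample.
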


\begin{remark}\label{rmk:nef}
The case of $d = 1$ of Theorem \ref{thm:nef} is shown in \cite[Proposition 3.3]{Nar17} and \cite[Lemma 13]{BM19}. So we assume that $d > 1$. Consult Remark \ref{rmk:d=1} to check the difference for $d = 1$ case. 
\end{remark}

We obtain another strictly nef bundle immediately. This proves Theorem \ref{thm:nefintro}.

\begin{corollary}\label{cor:nef}
The vector bundle $\cE_{x}^{*}\otimes \Theta$ is strictly nef. 
\end{corollary}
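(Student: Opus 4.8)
The plan is to deduce the corollary from Theorem \ref{thm:nef} itself, applied not to $\rM(r,L)$ but to an isomorphic copy obtained by dualizing. Fix a degree-one line bundle $N$ on $X$ and consider
\[
\tau : \rM(r, L) \to \rM(r, M), \qquad E \mapsto E^{*} \otimes N, \quad M := L^{-1} \otimes N^{r}.
\]
Since dualization and tensoring by a fixed line bundle are isomorphisms of moduli spaces, $\tau$ is an isomorphism. Here $\deg M = r - d$ satisfies $0 < r - d < r$ and $\gcd(r, r-d) = \gcd(r, d) = 1$, so $\rM(r, M)$ falls within the conventions of the paper and Theorem \ref{thm:nef} applies to it. Let $\cE'$ denote the normalized Poincar\'e bundle on $X \times \rM(r, M)$ and $\Theta'$ the ample generator of $\Pic(\rM(r, M))$; thus $\det \cE'_{x} = \Theta'^{\,\ell'}$, where $\ell'$ is determined by $\ell'(r-d) \equiv 1 \pmod r$ and $0 < \ell' < r$.

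The key step is to identify $\tau^{*}\cE'_{x}$. The fiber of $\tau^{*}\cE'_{x}$ over a point $[E]$ is $(E^{*} \otimes N)|_{x} \cong (E|_{x})^{*} \otimes N_{x}$, which is the fiber of $\cE_{x}^{*}$ tensored by the one-dimensional space $N_{x}$. Hence $\tau^{*}\cE'_{x}$ and $\cE_{x}^{*}$ induce the same projective bundle over $\rM(r,L)$, and by uniqueness of the Poincar\'e bundle up to a twist by a line bundle pulled back from the base, we get $\tau^{*}\cE'_{x} \cong \cE_{x}^{*} \otimes \Theta^{a}$ for some integer $a$. To pin down $a$ I would compute determinants. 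Because $\tau$ is an isomorphism of Fano varieties with $\Pic \cong \ZZ$, it sends the ample generator to the ample generator, so $\tau^{*}\Theta' = \Theta$. A short computation from $\ell d \equiv 1$ gives $(r - \ell)(r-d) \equiv -\ell d \equiv -1 \equiv r-1 \pmod r$, whence $\ell' = r - \ell$. Comparing determinants,
\[
\Theta^{\,r-\ell} = \tau^{*}\det \cE'_{x} = \det(\cE_{x}^{*} \otimes \Theta^{a}) = \Theta^{\,-\ell + ra},
\]
forces $a = 1$, so $\tau^{*}\cE'_{x} \cong \cE_{x}^{*} \otimes \Theta$.

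Finally, Theorem \ref{thm:nef} applied to $\rM(r, M)$ shows that $\cE'_{x}$ is strictly nef, and strict nefness is preserved under pullback along the isomorphism $\tau$ (curves and the tautological classes on the projective bundles correspond). Therefore $\cE_{x}^{*} \otimes \Theta \cong \tau^{*}\cE'_{x}$ is strictly nef, proving the corollary. I expect the only substantive point to be the identification $\tau^{*}\cE'_{x} \cong \cE_{x}^{*} \otimes \Theta$: it rests on the uniqueness of the normalized Poincar\'e bundle up to a base twist together with the determinant bookkeeping above, and in particular on correctly matching the normalizations $\ell$ and $\ell' = r - \ell$ of the two moduli spaces. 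Everything else is formal.
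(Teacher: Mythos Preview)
Your argument is essentially the paper's own proof: both pass to the dual moduli space $\rM(r, L^{-1}\otimes N^{r})$ via $E \mapsto E^{*}\otimes N$ for a degree-one line bundle $N$, identify the pullback of its normalized Poincar\'e bundle with $\cE_{x}^{*}\otimes\Theta$, and invoke Theorem~\ref{thm:nef}. One trivial slip: $(r-\ell)(r-d)\equiv \ell d \equiv 1 \pmod r$, not $-\ell d$; your conclusion $\ell' = r-\ell$ is nonetheless correct, and the determinant bookkeeping goes through as written.
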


\begin{proof}
Fix a line bundle $A$ of degree $1$ on $X.$ Consider the vector bundle $\cE^* \otimes p^* A \otimes q^* \Theta$ on $X \times \rM(r,L)$, where $p : X \times \rM(r, L) \to X$ and $q : X \times \rM(r, L) \to \rM(r, L)$ are two projections.  From the isomorphism $\rM(r,L) \cong \rM(r,L^{*}) \cong \rM(r, A^{r} \otimes L^{*}),$ we see that $\cE^* \otimes p^* A\otimes q^* \Theta$ is the normalized Poincar\'e bundle on $X \times \rM(r,A^{r} \otimes L^*) \cong X \times \rM(r, L)$. The restriction of  $\cE^* \otimes p^* A\otimes q^* \Theta$ to $x \times \rM(r,L)$ is isomorphic to $\cE_x^* \otimes \Theta$. From Theorem \ref{thm:nef}, we see that $\cE_{x}^{*}\otimes \Theta$ is strictly nef. 
\end{proof}

From now on, we prove the nefness of $\cE_{x}$. By definition, we need to show that $\cO_{\PP (\cE_{x})}(1)$ is nef. Observe that $\PP (\cE_{x}) \cong \rM(r, L, r-1, \epsilon)$ for some very small $\epsilon > 0$ (Example \ref{ex:smallweight}).  

We explicitly analyze the first wall-crossing of the moduli space $\rM(r, L, r-1, \epsilon)$ by increasing $\epsilon \to 1$. Recall that $\ell$ is a positive integer such that $\ell d \equiv 1 \;\mbox{mod} \;r$ and $0 < \ell < r$. 

\begin{lemma}\label{lem:1stwallcrossing}
Let $a$ be the smallest parabolic weight on a wall. Then $a = 1/\ell$. Furthermore, a maximal destabilizing subbundle has rank $k\ell$ and degree $ke$ for some $k \in \ZZ$ and an integer $e$ satisfying $\ell d - re = 1$. 
\end{lemma}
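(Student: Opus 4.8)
The plan is to make the wall-crossing combinatorics of $\rM(r,L,r-1,\epsilon)$ completely explicit and then reduce the whole statement to an elementary congruence argument. Since there is a single parabolic point, a weight is a single rational $a \in (0,1)$, and a potential destabilizing parabolic subbundle is recorded by numerical data $(s,e,n)$ with $s = \rk F$, $e = \deg F$, and $n = \dim(F|_{x} \cap V)$, where $0 < s < r$. The wall $\Delta(s,e,n)$ is the single solution of $\frac{e+na}{s} = \frac{d+(r-1)a}{r}$, namely
\[
	a = \frac{sd-re}{rn - s(r-1)}.
\]
The first thing I would record is purely linear-algebraic: $V$ is a hyperplane in $E|_{x}$, so any $s$-dimensional subspace $F|_{x}$ meets $V$ in dimension $s$ or $s-1$. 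Hence only $n \in \{s-1,s\}$ can occur.

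Next I would reduce to the case $n = s$. Just below the first wall the underlying bundle $E$ is stable (Example \ref{ex:smallweight}), so every subbundle satisfies $\mu(F) < \mu(E)$, i.e. $sd - re > 0$; positivity of $a$ then forces $rn - s(r-1) > 0$, and together with $n \le s$ this is possible only for $n = s$. (Equivalently, the $n = s-1$ walls are carried to $n=s$ walls of complementary rank $r-s$ by the duality $\Delta(s,e,n) = \Delta(r-s,d-e,(r-1)-n)$ of \eqref{eqn:wallduality}.) With $n = s$ the formula collapses to $a = (sd-re)/s$, so the task becomes minimizing $(sd-re)/s$ over $s \in \{1,\dots,r-1\}$ and $e \in \ZZ$ with $sd - re > 0$. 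For fixed $s$ the smallest admissible numerator is the least positive residue $t_{s} := (sd \bmod r) \in \{1,\dots,r-1\}$, which is nonzero since $(r,d)=1$ and $0<s<r$; this gives the candidate wall $a_{s} = t_{s}/s$.

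The core of the argument is the minimization of $t_{s}/s$. Multiplying the congruence $t_{s} \equiv sd \pmod r$ by $\ell$ and using $\ell d \equiv 1 \pmod r$ yields $\ell t_{s} \equiv s \pmod r$; since $\ell t_{s}$ is a positive integer congruent to $s \in \{1,\dots,r-1\}$, necessarily $\ell t_{s} \ge s$, that is $a_{s} \ge 1/\ell$, with equality exactly when $\ell t_{s} = s$. Equality holds at $s = \ell$ (where $t_{\ell} = 1$), and as $d > 1$ forces $\ell \ge 2$ we have $1/\ell \in (0,1)$; hence the smallest weight on a wall is $1/\ell$. To identify the destabilizing subbundles I would observe that $\ell t_{s} = s < r$ forces $s = k\ell$ and $t_{s} = k$ for some $k \ge 1$ with $k\ell < r$; then $sd - re = k$ becomes $k\ell d - re = k$, so $re = k(\ell d - 1)$, and since $e_{0} := (\ell d - 1)/r \in \ZZ$ satisfies $\ell d - r e_{0} = 1$, we get $e = k e_{0}$. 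Thus the maximal destabilizing subbundle has rank $k\ell$ and degree $k e_{0}$ with $\ell d - r e_{0} = 1$, exactly as stated.

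I expect the number-theoretic step — establishing $t_{s}/s \ge 1/\ell$ and pinning down the equality cases through $\ell t_{s} \equiv s \pmod r$ — to be the only genuinely subtle point; the reductions $n \in \{s-1,s\}$ and $n = s$ are routine once the hyperplane observation is in place. The remaining loose end is to confirm that $a = 1/\ell$ is actually a nonempty wall, which I would verify by exhibiting an explicit strictly semistable bundle: take a stable $E$ carrying a rank-$\ell$, degree-$e_{0}$ subbundle $F$ with $\ell d - r e_{0} = 1$, and choose the hyperplane $V \subset E|_{x}$ to contain $F|_{x}$, so that $n = \ell = s$ and $\mu(F,W) = \mu(E,V)$ precisely at $a = 1/\ell$.
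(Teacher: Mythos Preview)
Your proof is correct and follows essentially the same route as the paper's: both reduce to the case $n=s$ via the duality $\Delta(s,e,n)=\Delta(r-s,d-e,(r-1)-n)$, write the wall value as $a=(sd-re)/s$, and then minimize using the congruence $\ell d\equiv 1\pmod r$. Your inequality $\ell t_s\ge s$ (from $\ell t_s\equiv s\pmod r$) is a slightly cleaner packaging of the paper's case split on whether $k\ell<r$ or $k\ell\ge r$, and your explicit nonemptiness check at $a=1/\ell$ is a detail the paper leaves implicit.
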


\begin{proof}
Let $\Delta(s, e, n)$ be a wall. Note that $n$ is either $s$ or $s-1$. By Equation \eqref{eqn:wallduality}, by exchanging $s$ by $r-s$ if necessary, we may assume that $n = s$. Then from $(e+sa)/s = (d+(r-1)a)/r$, we have $a = (sd - re)/s$. Since $(r, d) = 1$, we can find a unique positive $0 < s < r$ and $e \in \ZZ$ such that $sd - re = 1$, which is $\ell$. 

We claim that $a = (\ell d - re)/\ell = 1/\ell$ provides the first wall. Suppose that there is another wall $a' = (s'd - re')/s'$. Then $s'd - re' = k > 1$. So $s'd \equiv k \;\mathrm{mod}\; r$. On the other hand, $k\ell d \equiv k \;\mathrm{mod}\; r$. So if $k\ell < r$, then $s' = k\ell$ and $e' = ke$. Then $a' = k/k\ell = 1/\ell = a$. If $k\ell \ge r$, there is a unique positive integer $t$ such that $0 < s' = k\ell - tr < r$. Then $a = k/s' = k/(k\ell-tr) > k/k\ell = 1/\ell$. 

The above numerical computation tells us that $\Delta(\ell, e, \ell) = \Delta(s', e', s')$ only if $(s', e')=(k\ell, ke)$. So we obtain the last assertion. 
\end{proof}

We have the following diagram:
\[
	\xymatrix{&\PP(\cE_{x}) = \rM(r, L, r-1, \epsilon) \ar[ld]_{\pi} \ar[rd]^{\pi_{-}}\\
	\rM(r, L) && \rM(r, L, r-1, 1/\ell)}
\]
The first map $\pi$ is a projective bundle and $\pi_{-}$ is a small contraction by Corollary \ref{cor:codimcenter}. And $\rho(\PP(\cE_{x})) = \rho(\rM(r, L)) + 1 = 2$. Since $\pi_{-}$ is a small contraction, $1 \le \rho(\rM(r, L, r-1, 1/\ell)) < \rho(\rM(r, L, r-1, \epsilon)) = 2$, so $\rho(\rM(r, L, r-1, 1/\ell)) = 1$. Let $A$ be an ample generator of $\mathrm{Pic}(\rM(r, L, r-1, 1/\ell))$. Then $\pi^{*}\Theta$ and $\pi_{-}^{*}A$ generates $\rN^{1}(\PP(\cE_{x}))_{\RR}$. 

\begin{definition}\label{def:twocurves}
Fix a \emph{general} point $((E^{-}, V^{-}), (E^{+}, V^{+}))$ in the component $\rM(\ell, e, \ell, 1/\ell) \times_{\mathrm{Pic}(X)}\rM(r-\ell, d-e, r-\ell-1, 1/\ell)$ of the wall-crossing center in $\rM(r, L, r-1, 1/\ell)$. The fiber $\pi_{-}^{-1}(((E^{-}, V^{-}), (E^{+}, V^{+})))$ is a projective space $\PP \Ext^{1}((E^{+}, V^{+}), (E^{-}, V^{-}))$. Let $C$ be a line class in it. 
\end{definition}

\begin{lemma}\label{lem:intersection2}
The intersection number $\cO_{\PP (\cE_{x})}(1) \cdot C$ is zero. 
\end{lemma}

\begin{proof}
The image $\pi(\PP \Ext^{1}((E^{+}, V^{+}), (E^{-}, V^{-}))) = \PP \Ext^{1}(E^{+}, E^{-}) =: \PP$ parametrizes isomorphism classes of extensions, and there is an exact sequence over $X \times \PP$
\[
	0 \to p^{*}E^{-} \otimes q^{*}\cO_{\PP}(1) \to E \otimes q^{*}\cO_{\PP}(m) \to p^{*}E^{+} \to 0
\]
(\cite[Lemma 2.3]{Ram73}, \cite[Example 2.1.12]{HL10}). Here $p : X \times \PP \to X$ and $q : X \times \PP \to \PP$ are two projections. If we restrict the exact sequence to $x \times C \cong x \times \PP^{1} \subset X \times \PP$, we have 
\[
	0 \to E^{-}_{x} \otimes \cO_{\PP^{1}}(1) \to E_{x} \otimes \cO_{\PP^{1}}(m) \to E^{+}_{x} \to 0.
\]
Since $\cE_{x}$ (and hence its restriction $E_{x}$) is normalized as $c_{1}(\cE_{x}) = \Theta^{\ell}$ where $0 < \ell < r$, and $E^{-}_{x}$ and $E^{+}_{x}$ are constant, $\ell = c_{1}(E^{-}_{x} \otimes \cO_{\PP^{1}}(1)) = c_{1}(E_{x} \otimes \cO_{\PP^{1}}(m)) = \ell + rm$. Thus, we have $m = 0$. Then $E_{x}|_{\pi(C)}$ fits in $0 \to \cO_{\PP^{1}}(1)^{\ell} \to E_{x} \to \cO_{\PP^{1}}^{r-\ell} \to 0$. By a cohomology computation, we can show that this is a split extension. Therefore $\pi^{-1}(\pi(C)) = \PP(\cO_{\PP^{1}}(1)^{\ell} \oplus \cO_{\PP^{1}}^{r-\ell})$. The parabolic flag in $E_{x}$ is determined by that of $E^{+}_{x}$ and it is fixed over $C$. This implies that $C \cong \PP (\cO_{\PP^{1}}) \hookrightarrow \PP(\cO_{\PP^{1}}(1)^{\ell} \oplus \cO_{\PP^{1}}^{r-\ell})$. Therefore $\cO_{\PP(\cE_{x})}(1)|_{C} = \cO_{\PP (\cO_{\PP^{1}})}(1) = \cO_{\PP^{1}}$ and $\cO_{\PP (\cE_{x})}(1) \cdot C = 0$. 
\end{proof}

\begin{proof}[Proof of Theorem \ref{thm:nef}]
From $\rho(\PP (\cE_{x})) = 2$, $\pi_{-}^{*}A \cdot C = 0$, and Lemma \ref{lem:intersection2}, we can conclude that $\cO_{\PP (\cE_{x})}(1)$ and $\pi_{-}^{*}A$ are proportional. $\cO_{\PP(\cE_{x})}(1)$ is a positive multiple of $\pi_{-}^{*}A$ because it intersects with the line class in a fiber of $\pi : \PP (\cE_{x}) \to \rM(r, L)$ positively. Therefore $\cO_{\PP(\cE_{x})}(1)$ is semi-ample, and it is nef. By definition, $\cE_{x}$ is nef. $\cE_{x}$ is strictly nef because $\cO_{\PP(\cE_{x})}(1)$ is not ample.
\end{proof}

We immediately obtain the nef cones of $\PP(\cE_{x})$ and $\PP(\cE_{x}^{*})$. The bigness in the statements follows from Lemma \ref{lem:pullbackofample} and Corollary \ref{cor:pullbackofample}. 

\begin{corollary}\label{cor:nefcone}
The nef cone of $\PP(\cE_{x}) = \rM(r, L, r-1, \epsilon)$ is generated by $\pi^{*}\Theta$ and $\cO_{\PP(\cE_{x})}(1)$. If $d \ne 1$, $\cO_{\PP(\cE_{x})}(1)$ is big. 
\end{corollary}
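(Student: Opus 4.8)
The plan is to use that $\rho(\PP(\cE_x)) = 2$, established above, so that $\rN^1(\PP(\cE_x))_{\RR}$ is a plane and the nef cone is a closed convex cone bounded by exactly two extremal rays. It then suffices to produce two non-proportional classes that are nef but not ample: each must lie on a boundary ray, and since there are only two such rays, the two classes will generate the whole nef cone.

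First I would take $\pi^*\Theta$. It is nef, being the pullback of the ample class $\Theta$, and it is not ample because $\pi^*\Theta \cdot F = 0$ for a line $F$ in a fiber of the projective bundle $\pi : \PP(\cE_x) \to \rM(r, L)$. Next I would take $\cO_{\PP(\cE_x)}(1)$, which is nef by Theorem \ref{thm:nef} and, as shown in that proof, strictly nef; in particular $\cO_{\PP(\cE_x)}(1) \cdot C = 0$ for the curve $C$ of Lemma \ref{lem:intersection2}, so it is not ample. To separate the two rays I would pair both classes with $F$ and with $C$: one has $\cO_{\PP(\cE_x)}(1) \cdot F > 0$ since the tautological class is positive on $\pi$-fibers, whereas $\pi^*\Theta \cdot F = 0$; and $\pi^*\Theta \cdot C > 0$ because $\pi$ restricts to an isomorphism $C \xrightarrow{\sim} \pi(C) \cong \PP^1$ in the proof of Lemma \ref{lem:intersection2}, whereas $\cO_{\PP(\cE_x)}(1) \cdot C = 0$. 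Hence the two classes are linearly independent nef classes lying on the boundary of the nef cone, so they span its two extremal rays and the nef cone equals $\langle \pi^*\Theta, \cO_{\PP(\cE_x)}(1)\rangle$.

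For the bigness statement I would first note that $d \ne 1$ is equivalent to $\ell \ge 2$: indeed $\ell d \equiv 1 \bmod r$ with $0 < \ell, d < r$ forces $\ell = 1$ exactly when $d = 1$. Thus the first wall $a = 1/\ell$ lies in the open interval $(0,1)$, so $\pi_- : \PP(\cE_x) \to \rM(r, L, r-1, 1/\ell)$ is a genuine small, hence birational, contraction (Corollary \ref{cor:codimcenter}). For such a morphism with normal target one has $\pi_{-*}\cO_{\PP(\cE_x)} = \cO$, so by the projection formula $\rH^0(\PP(\cE_x), m\pi_-^*A) = \rH^0(\rM(r, L, r-1, 1/\ell), mA)$; since $A$ is ample and the target has the same dimension, these grow like the top power of $m$, so $\pi_-^*A$ is big. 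As the proof of Theorem \ref{thm:nef} shows that $\cO_{\PP(\cE_x)}(1)$ is a positive multiple of $\pi_-^*A$, it is big as well. When $d = 1$ the wall instead sits at $a = 1$ on the boundary, and the relevant map is the Hecke fibration of Proposition \ref{prop:generalizedHeckemodification}, whose positive-dimensional fibers force $\cO_{\PP(\cE_x)}(1)$ to be semi-ample but not big; this is exactly why the hypothesis $d \ne 1$ is needed.

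The main difficulty is conceptual rather than computational: one must be certain that the two exhibited boundary classes actually exhaust the nef cone, which rests entirely on $\rho = 2$ together with the non-proportionality check against the two distinguished curves $F$ and $C$. For bigness the only real point is to recognize that $d \ne 1$ is precisely the condition placing the first wall in the interior, turning $\pi_-$ into a birational contraction.
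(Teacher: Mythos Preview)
Your argument is correct. For the nef cone you spell out exactly the reasoning the paper leaves implicit after the proof of Theorem \ref{thm:nef}: with $\rho=2$, the two contractions $\pi$ and $\pi_-$ determine the two boundary rays, witnessed by the curves $F$ and $C$.

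For bigness the paper takes a different route: it invokes Lemma \ref{lem:pullbackofample}, which identifies the far boundary of the effective cone as the ray through $\cO(r)\otimes\Theta^{1-\ell}$; since $\cO(1)$ corresponds to the parabolic weight $a=1/\ell$, it lies strictly between $\Theta$ and $\cO(r)\otimes\Theta^{1-\ell}$ in the effective cone exactly when $\ell>1$, hence is big. Your argument is more self-contained: you observe that $d\ne 1\Leftrightarrow \ell\ge 2$ places the first wall $1/\ell$ in $(0,1)$, so $\pi_-$ is a genuine small birational contraction, and the pullback of an ample class under a birational morphism to a normal variety of the same dimension is automatically big. This avoids the explicit line bundle identification of Lemma \ref{lem:pullbackofample} at the cost of not pinning down the effective cone, which the paper needs anyway later (Proposition \ref{prop:effectivecone}, Lemma \ref{lem:wallcrossing}). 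One small caveat: your use of $C$ tacitly assumes $d>1$, since $C$ is defined via the wall at $1/\ell$; for $d=1$ the boundary ray is instead witnessed by a fiber of the Hecke fibration $\pi_1$, as you note in your final paragraph.
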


\begin{corollary}\label{cor:nefconedual}
The nef cone of $\PP(\cE_{x}^{*}) = \rM(r, L, 1, \epsilon)$ is generated by $\pi^{*}\Theta$ and $\cO_{\PP(\cE_{x}^{*})}(1)\otimes \pi^{*}\Theta$. If $d \ne r-1$, $\cO_{\PP(\cE_{x}^{*})}(1) \otimes \pi^{*}\Theta$ is big. 
\end{corollary}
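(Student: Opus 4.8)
The plan is to deduce Corollary \ref{cor:nefconedual} from Corollary \ref{cor:nefcone} by a duality argument, rather than re-running the wall-crossing analysis of Section \ref{sec:nef} for $\rM(r, L, 1, \epsilon)$ from scratch. Fix a degree one line bundle $A$ on $X$ and set $L' := A^{r} \otimes L^{*}$, a line bundle of degree $d' := r - d$. As already observed in the proof of Corollary \ref{cor:nef}, the isomorphism $\rM(r, L) \cong \rM(r, L')$ sending $E \mapsto E^{*} \otimes A$ identifies $\cE'_{x} := \cE_{x}^{*} \otimes \Theta$ with the restriction of the normalized Poincar\'e bundle $\cE'$ on $X \times \rM(r, L')$; indeed $\det(\cE_{x}^{*} \otimes \Theta) = \Theta^{-\ell} \otimes \Theta^{r} = \Theta^{r-\ell}$, consistent with the normalization $\ell' = r - \ell$ forced by $\ell' d' \equiv 1 \pmod{r}$.

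Since $(r, d') = (r, d) = 1$ and $0 < d' < r$, Corollary \ref{cor:nefcone} applies verbatim to the moduli space $\rM(r, L')$ equipped with its normalized Poincar\'e bundle $\cE'$. It gives that the nef cone of $\PP(\cE'_{x}) = \rM(r, L', r-1, \epsilon)$ is generated by $\pi^{*}\Theta$ and $\cO_{\PP(\cE'_{x})}(1)$, and that $\cO_{\PP(\cE'_{x})}(1)$ is big whenever $d' \ne 1$, i.e. whenever $d \ne r-1$.

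It then remains to transport this description along the standard twisting isomorphism. Because $\PP(V \otimes M) \cong \PP(V)$ for a line bundle $M$, with the tautological quotient transforming as $\cO_{\PP(V \otimes M)}(1) \cong \cO_{\PP(V)}(1) \otimes \pi^{*}M$ (in the one-dimensional-quotient convention of this paper), the equality $\cE'_{x} = \cE_{x}^{*} \otimes \Theta$ yields an isomorphism $\PP(\cE'_{x}) \cong \PP(\cE_{x}^{*}) = \rM(r, L, 1, \epsilon)$ under which $\cO_{\PP(\cE'_{x})}(1)$ corresponds to $\cO_{\PP(\cE_{x}^{*})}(1) \otimes \pi^{*}\Theta$ while $\pi^{*}\Theta$ is preserved (the two forgetful maps commute, and the ample generator $\Theta$ is carried to $\Theta$ by the isomorphism $\rM(r, L) \cong \rM(r, L')$). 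As the nef cone and the bigness of a divisor are isomorphism invariants, this immediately produces the stated generators $\pi^{*}\Theta$ and $\cO_{\PP(\cE_{x}^{*})}(1) \otimes \pi^{*}\Theta$ of the nef cone of $\PP(\cE_{x}^{*})$, together with the bigness of $\cO_{\PP(\cE_{x}^{*})}(1) \otimes \pi^{*}\Theta$ when $d \ne r-1$.

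The argument is essentially formal, so the only genuine points demanding care are bookkeeping ones: getting the twist in $\cO_{\PP(V \otimes M)}(1) \cong \cO_{\PP(V)}(1) \otimes \pi^{*}M$ correct for the quotient convention (so that one lands on $\cO(1) \otimes \pi^{*}\Theta$ and not on $\cO(1) \otimes \pi^{*}\Theta^{-1}$), and confirming $\det(\cE'_{x}) = \Theta^{r-\ell}$ so that $\cE'_{x}$ really is the \emph{normalized} Poincar\'e bundle to which Corollary \ref{cor:nefcone} applies. I expect this twisting bookkeeping to be the main, and essentially the only, obstacle. Should one prefer to avoid the duality reduction, an alternative is to run the wall-crossing analysis of Section \ref{sec:nef} directly on $\rM(r, L, 1, \epsilon)$, determining its first wall as $\epsilon \to 1$ in exact parallel with Lemma \ref{lem:1stwallcrossing}; but this would merely reprove by hand what duality delivers for free.
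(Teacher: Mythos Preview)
Your proposal is correct and follows essentially the same approach as the paper. The paper does not spell out a proof of Corollary \ref{cor:nefconedual} but simply remarks that the nef cone is obtained ``immediately'' and that bigness follows from Lemma \ref{lem:pullbackofample} and Corollary \ref{cor:pullbackofample}; the underlying mechanism is precisely the duality $\rM(r,L) \cong \rM(r, A^{r}\otimes L^{*})$ already exploited in Corollary \ref{cor:nef}, transported through the twist $\cO_{\PP(\cE_{x}^{*}\otimes \Theta)}(1) \cong \cO_{\PP(\cE_{x}^{*})}(1)\otimes \pi^{*}\Theta$, exactly as you carry out.
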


\begin{remark}\label{rmk:d=1}
It is worth to point out a difference in $d = 1$ case. The numerical computation in Lemma \ref{lem:1stwallcrossing} is still valid. But in this case, from $\ell d \equiv 1 \;\mathrm{mod}\; r$, we have $\ell = 1$ and thus, $a = 1$. Therefore, the first wall-crossing is precisely the fibration $\rM(r, L, r-1, \epsilon) \to \rM(r, L(-x))$ in Proposition \ref{prop:generalizedHeckemodification}, that is, a contraction in the Hecke correspondence. 
\end{remark}

\section{Vanishing of cohomology and embedding of the derived category}\label{sec:derivedcategory}

The aim of this section is to prove Theorem \ref{thm:mainthm}. 

\subsection{Bondal-Orlov criterion}

Let $\cE$ be the normalized Poincar\'e bundle over $X \times \rM(r, L)$. Let $p : X \times \rM(r, L) \to X$, $q : X \times \rM(r, L) \to \rM(r, L)$ be two projections. Consider the Fourier-Mukai transform 
\begin{eqnarray*}
	\Phi_{\cE} : \rD^{b}(X) &\to& \rD^{b}(\rM(r, L))\\
	F^{\bullet} & \mapsto & Rq_{*}(\cE \otimes^L Lp^{*} F^{\bullet}).
\end{eqnarray*}

The Bondal-Orlov criterion (\cite[Theorem 1.1]{BO95}) provides the necessary and sufficient condition for the fully-faithfulness of a Fourier-Mukai transform between two smooth algebraic varieties. The next theorem is a version applied to $\Phi_{\cE}$.

\begin{theorem}[Bondal-Orlov criterion]\label{thm:vanishing}
For each $x \in X$, let $\cE_{x}$ be the restriction of the normalized Poincar\'e bundle on $\rM(r, L)$. Then $\Phi_{\cE} : \rD^{b}(X) \to \rD^{b}(\rM(r, L))$ is fully faithful if and only if the following conditions hold:
\begin{enumerate}
\item $\rH^{0}(\rM(r, L), \cE_{x} \otimes \cE_{x}^{*}) \cong \CC$. 
\item $\rH^{i}(\rM(r, L), \cE_{x} \otimes \cE_{x}^{*}) = 0$ for $i \ge 2$. 
\item $\rH^{i}(\rM(r, L), \cE_{x} \otimes \cE_{y}^{*}) = 0$ for all $x \ne y$ and all $i$. 
\end{enumerate}
\end{theorem}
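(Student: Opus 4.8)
The plan is to obtain this statement as a direct specialization of the general Bondal--Orlov orthonormality criterion, once we compute the image of a skyscraper sheaf under $\Phi_{\cE}$. Recall that \cite[Theorem 1.1]{BO95} asserts that for smooth projective varieties $M$, $M'$ and a kernel in $\rD^{b}(M \times M')$, the associated Fourier--Mukai functor $\Phi$ is fully faithful if and only if, for all closed points $x, y \in M$, the space $\rHom_{\rD^{b}(M')}(\Phi(\cO_{x}), \Phi(\cO_{y})[i])$ is one-dimensional when $x = y$ and $i = 0$, and vanishes in each of the following cases: $x \ne y$ for every $i$; $x = y$ with $i < 0$; and $x = y$ with $i > \dim M$. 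Our task is just to rewrite these conditions for $M = X$, $M' = \rM(r, L)$, and the kernel $\cE$.

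First I would identify $\Phi_{\cE}(\cO_{x})$. Since the projection $p : X \times \rM(r, L) \to X$ is flat, $Lp^{*}\cO_{x} = p^{*}\cO_{x}$ is the structure sheaf of $\{x\} \times \rM(r, L)$, and hence $\cE \otimes^{L} Lp^{*}\cO_{x}$ is the restriction $\cE_{x}$, regarded as a sheaf supported on $\{x\} \times \rM(r, L)$. Because $q$ restricts to an isomorphism $\{x\} \times \rM(r, L) \cong \rM(r, L)$, the derived pushforward collapses and yields $\Phi_{\cE}(\cO_{x}) \cong \cE_{x}$, a locally free sheaf sitting in cohomological degree zero. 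Using local freeness of $\cE_{x}$ to compute $\cHom$, we then obtain
\[
	\rHom_{\rD^{b}(\rM(r, L))}(\Phi_{\cE}(\cO_{x}), \Phi_{\cE}(\cO_{y})[i]) = \Ext^{i}(\cE_{x}, \cE_{y}) = \rH^{i}(\rM(r, L), \cE_{x}^{*} \otimes \cE_{y}).
\]

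It then remains to match the two lists of conditions. As $X$ is a curve, $\dim X = 1$, so in the diagonal case $x = y$ the criterion constrains only $i = 0$ (it must give $\CC$), $i < 0$, and $i \ge 2$, leaving $i = 1$ unconstrained; this is precisely why condition (2) asks for vanishing only in the range $i \ge 2$. The vanishing for $i < 0$ is automatic since $\cE_{x}^{*} \otimes \cE_{x}$ is a genuine sheaf. The diagonal degree-zero requirement becomes $\rH^{0}(\rM(r, L), \cE_{x}^{*} \otimes \cE_{x}) \cong \CC$, which is condition (1), and the diagonal higher-degree requirement becomes condition (2). Finally, for $x \ne y$ the criterion demands $\rH^{i}(\rM(r, L), \cE_{x}^{*} \otimes \cE_{y}) = 0$ for every $i$ and every ordered pair with $x \ne y$; relabeling $(x, y) \mapsto (y, x)$ turns $\cE_{x}^{*} \otimes \cE_{y}$ into $\cE_{x} \otimes \cE_{y}^{*}$, so this family of conditions coincides with condition (3). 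I expect no genuine obstacle here: the only substantive point is the derived computation $\Phi_{\cE}(\cO_{x}) \cong \cE_{x}$, i.e. the correct use of the flat pullback and of the pushforward of a sheaf supported on a section; after that, the equivalence of the two condition lists is a purely formal translation with no analytic content.
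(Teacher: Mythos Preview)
Your proposal is correct and matches the paper's treatment: the paper does not give an explicit proof of this theorem but simply presents it as the specialization of \cite[Theorem~1.1]{BO95} to the kernel $\cE$, and your computation $\Phi_{\cE}(\cO_{x}) \cong \cE_{x}$ together with the relabeling $(x,y)\mapsto(y,x)$ is exactly the translation the paper leaves to the reader.
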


The main result of this section is to show the vanishing of cohomologies when $g \ge r+3$. Then Theorem \ref{thm:mainthm} follows immediately. 

\begin{proof}[Proof of Theorem \ref{thm:mainthm}]
Items (1) and (2) in Theorem \ref{thm:vanishing} are already proved in \cite[Section 3]{BM19} by extending the work of Narasimhan and Ramanan in \cite{NR75}. Item (3) is obtained by combining Corollary \ref{cor:vanishingforEx} and Proposition \ref{prop:highervanishing}. When $(r-1)(g-1) \ge r^{2}$, or equivalently, $g \ge r+3$, all $i$'s are covered by the above two statements. 
\end{proof}

\begin{remark}
For $d = 1$, Belmans and Mukhopadhyay proved the theorem for $g \ge r+3$ (\cite[Theorem 3]{BM19}). 
\end{remark}

\begin{remark}
We expect that the genus bound in Theorem \ref{thm:mainthm} is not essential. It would be a very interesting task to prove the statement for every rank and genus.
\end{remark}

\subsection{Vanishing of cohomology}\label{ssec:cohomologyvanishing}

From now on, we investigate cohomology of line bundles on $\rM(r, L, \bm, \ba)$ where there are two parabolic points $\bp = (x, y)$ and $\bm = (r-1, 1)$. When the parabolic weight $\ba$ is sufficiently small, $\rM(r, L, \bm, \ba) \cong \PP(\cE_{x}) \times_{\rM(r, L)}\PP(\cE_{y}^{*})$ by Example \ref{ex:smallweight}. By investigating the wall-crossing, we show the vanishing of some cohomology groups on $\rM(r, L, \bm, \ba)$. 

The following lemma is obtained by essentially the same computation with \cite[Proposition 3.1]{Nar17}. 

\begin{lemma}\label{lem:DetE}
Let $\cE$ be the normalized Poincar\'e bundle on $X \times \rM(r, L)$. Then 
\[
	\Det(\cE^{*}) := \det(Rq_{*}(\cE^{*}))^{-1} \cong \Theta^{\ell(1-g) - e}.
\]
\end{lemma}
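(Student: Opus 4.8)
The plan is to apply Grothendieck--Riemann--Roch (GRR) to the projection $q : X \times \rM(r,L) \to \rM(r,L)$, exactly along the lines of \cite[Proposition 3.1]{Nar17}. Since $\Det(\cE^{*})$ only records the determinant of $Rq_{*}\cE^{*}$, I only need the codimension $\le 2$ part of $\mathrm{ch}(\cE^{*})$ together with $\mathrm{td}(T_{q})$. The map $q$ is smooth with fiber $X$, so its relative tangent is $p^{*}T_{X}$ and $\mathrm{td}(T_{q}) = p^{*}\mathrm{td}(T_{X}) = 1 + (1-g)f$, where $f := p^{*}[\mathrm{pt}] \in \rH^{2}(X \times \rM(r,L))$ and $\deg K_{X} = 2g-2$. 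GRR then reduces the whole problem to a single degree $2$ pushforward,
\[
	c_{1}(Rq_{*}\cE^{*}) = q_{*}\Big( \mathrm{ch}_{2}(\cE^{*}) + (1-g)\, f \cdot c_{1}(\cE^{*}) \Big).
\]

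Next I would decompose $c_{1}(\cE)$ by the Künneth formula as $c_{1}(\cE) = d\,f + \gamma + \ell\, q^{*}\Theta$, where $\gamma \in \rH^{1}(X) \otimes \rH^{1}(\rM(r,L))$: the coefficient $d$ is read off by restricting to $X \times \{\mathrm{pt}\}$ (the fibers have degree $d$), and the coefficient $\ell$ is forced by the normalization $\det\cE_{x} = \Theta^{\ell}$ upon restricting to $\{x\} \times \rM(r,L)$. Substituting $c_{1}(\cE^{*}) = -c_{1}(\cE)$ and pushing forward, the terms $f^{2}$, $f\cdot\gamma$, and all pieces landing in $\rH^{\mathrm{odd}}(X) \otimes \rH^{*}(\rM(r,L))$ die under $q_{*}$; using $q_{*}f = 1$, the identifiable contributions assemble into $\big(\ell(g-1) + d\ell\big)\Theta$, leaving the correction $\tfrac{1}{2}\, q_{*}(\gamma^{2}) - q_{*}\big(c_{2}(\cE)\big)$.

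The main obstacle is precisely this correction: the two pushforwards $q_{*}(\gamma^{2})$ and $q_{*}(c_{2}(\cE))$ are not tautologically pulled back from $\rM(r,L)$ and must be evaluated. Since $\Pic(\rM(r,L)) = \ZZ\Theta$, each is an integer multiple of $\Theta$, so the content is to identify those integers; these are exactly the intersection numbers of the Chern classes of the Poincar\'e bundle computed by Narasimhan and Ramanan. I would invoke the same computation as in \cite[Proposition 3.1]{Nar17}, and combine it with the relation $\ell d - re = 1$ from Lemma \ref{lem:1stwallcrossing}, so that all contributions collapse to $c_{1}(Rq_{*}\cE^{*}) = (\ell(g-1)+e)\Theta$, hence $\Det(\cE^{*}) = \det(Rq_{*}\cE^{*})^{-1} = \Theta^{\ell(1-g)-e}$.

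As a consistency check, and an alternative route that isolates the dependence on $\pm e$, I would use relative Grothendieck--Serre duality for $q$ (with relative dualizing sheaf $p^{*}\omega_{X}$) together with the elementary twist computation
\[
	c_{1}(Rq_{*}(\cE^{*} \otimes p^{*}\omega_{X})) = c_{1}(Rq_{*}\cE^{*}) - 2\ell(g-1)\Theta,
\]
which only involves $c_{1}(\cE)$ and avoids $c_{2}$. Duality identifies $Rq_{*}(\cE^{*} \otimes p^{*}\omega_{X})$ with the dual of $Rq_{*}\cE$ up to shift, yielding $\Det(\cE^{*}) \otimes \Det(\cE) \cong \Theta^{2\ell(1-g)}$. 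This lets one deduce the $\cE^{*}$ statement from the $\cE$ statement (the case treated in \cite{Nar17}) via the substitution $e \mapsto -e$, and confirms the symmetry of the exponent.
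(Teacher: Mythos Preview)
The paper takes a much shorter route. Rather than unpacking GRR and K\"unneth, it invokes the determinant-line-bundle formula \cite[Proposition~2.1]{Nar17}
\[
\Theta_{\rM'} \;=\; \Det(\cF)^{r} \otimes (\det \cF_{x})^{\,d'+r(1-g)},
\]
valid for any Poincar\'e bundle $\cF$ on a moduli space of degree-$d'$ bundles. Applying this with $\cF=\cE^{*}$ regarded as a Poincar\'e bundle on $\rM(r,L^{*})\cong\rM(r,L)$ (so $d'=-d$ and $\det\cE_{x}^{*}=\Theta^{-\ell}$), one solves $\Theta=\Det(\cE^{*})^{r}\otimes\Theta^{-\ell(-d+r(1-g))}$ for $\Det(\cE^{*})$; the relation $\ell d - re=1$ turns the exponent into $r(\ell(1-g)-e)$ in one line. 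The correction term $\tfrac12\, q_{*}(\gamma^{2})-q_{*}(c_{2}(\cE))$ never appears.

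In your direct GRR approach that correction is precisely the step left undone, and the reference you give does not fill it: \cite[Proposition~3.1]{Nar17} is proved by the same $\Theta$-formula argument as above, not by evaluating the individual K\"unneth pushforwards $q_{*}(\gamma^{2})$ and $q_{*}(c_{2}(\cE))$, so invoking it is circular rather than a computation. Your duality ``consistency check'' also contains a slip. Combining relative Serre duality $\det Rq_{*}(\cE^{*}\otimes p^{*}\omega_{X})=\det Rq_{*}\cE$ with your twist identity gives the \emph{ratio}
\[
\Det(\cE^{*}) \;=\; \Det(\cE)\otimes\Theta^{2\ell(1-g)},
\]
not the product $\Det(\cE^{*})\otimes\Det(\cE)\cong\Theta^{2\ell(1-g)}$ you wrote; indeed the paper's method yields $\Det(\cE)=\Theta^{\ell(g-1)-e}$, so the product is $\Theta^{-2e}$. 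Accordingly the passage from $\cE$ to $\cE^{*}$ flips the sign of $\ell(g-1)$ while keeping $e$ fixed, rather than sending $e\mapsto -e$. Once these points are corrected the argument can be completed, but by then you have essentially reproduced the paper's one-line proof via the $\Theta$-formula.
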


\begin{proof}
For the notational simplicity, let $\rM := \rM(r, L)$ and $\rM' := \rM(r, L^{*})$. Then there is an isomorphism $\psi : \rM \to \rM'$. Since the isomorphism maps the unique ample generator $\Theta_{\rM'}$ to $\Theta_{\rM}$, by \cite[Proposition 2.1]{Nar17}, 
\[
	\Theta_{\rM} = \psi^{*}(\Theta_{\rM'}) = (\Det(\cE^{*}))^{r} \otimes (\det(\cE^{*}|_{\{x\} \times \rM}))^{-d+r(1-g)}
	= \Det(\cE^{*})^{r} \otimes \Theta_{\rM}^{-\ell(-d+r(1-g))}.
\]
Thus, $\Det(\cE^{*}) = \Theta_{\rM}^{\frac{1 + \ell (-d + r(1-g))}{r}} = \Theta_{\rM}^{-e + \ell(1-g)}$.
\end{proof}

Once we fix the parabolic points and the multiplicity, $\rM(r, L, \bm, \ba)$ are all birational, and for any general $\ba$ and $\ba'$, $\rM(r, L, \bm, \ba)$ and $\rM(r, L, \bm, \ba')$ are connected by finitely many flips. In particular, their Picard groups are identified. For a notational simplicity, we will suppress all pull-backs (by flips and regular contractions) in our notation. For instance, when there is only one parabolic point $x$, there are two rational contractions $\pi : \rM(r, L, r-1, \epsilon) \to \rM(r, L)$ and $\pi_{1} : \rM(r, L, r-1, \epsilon) \dashrightarrow \rM(r, L, r-1, 1-\epsilon) \to \rM(r, L(-x))$. If there is no chance of confusion, we use $A \otimes B$ instead of $\pi^{*}A \otimes \pi_{1}^{*}B$. We denote $\cO_{\PP (\cE_{x})}(a)$ by $\cO(a)$. We also set $\cO(a, b) := p_{1}^{*}\cO_{\PP(\cE_{x})}(a) \otimes p_{2}^{*}\cO_{\PP(\cE_{y}^{*})}(b)$ where $p_{1} : \PP(\cE_{x})\times_{\rM(r, L)}\PP(\cE_{y}^{*}) \to \PP(\cE_{x})$ and $p_{2} : \PP(\cE_{x})\times_{\rM(r, L)}\PP(\cE_{y}^{*}) \to \PP(\cE_{y}^{*})$.

\begin{lemma}\label{lem:pullbackofample}
Let $k = (r, d-1)$. On $\rM(r, L, r-1, a)$, 
\[
	\Theta_{\rM(r, L(-x))}^{k} = \cO_{\PP(\cE_{x})}(r) \otimes \Theta_{\rM(r, L)}^{1-\ell}.
\]
\end{lemma}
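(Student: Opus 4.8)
The plan is to pin down the class $\pi_1^*\Theta_{\rM(r,L(-x))}$ in the Picard group of the parabolic moduli space by a determinant-of-cohomology argument, exactly in the spirit of Lemma \ref{lem:DetE}. By Example \ref{ex:smallweight}, $\PP(\cE_x)\cong\rM(r,L,r-1,\epsilon)$ is a projective bundle over $\rM(r,L)$, so its Picard group is freely generated by $\cO_{\PP(\cE_x)}(1)$ and $\pi^*\Theta$; after suppressing the flip pullbacks, every line bundle on $\rM(r,L,r-1,a)$ has the form $\cO(a')\otimes\pi^*\Theta^{b'}$, and it suffices to realize $\pi_1^*\Theta_{\rM(r,L(-x))}^{k}$ as one such combination. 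The object encoding the map $\pi_1$ is the Hecke-modified family: writing $P$ and $Q$ for the projections of $X\times\PP(\cE_x)$ to $X$ and to $\PP(\cE_x)$, and $\iota:\{x\}\times\PP(\cE_x)\hookrightarrow X\times\PP(\cE_x)$, I would form
\[
0\to\cE'\to(\mathrm{id}\times\pi)^*\cE\to\iota_*\cO_{\PP(\cE_x)}(1)\to 0,
\]
where the surjection is induced by the universal one-dimensional quotient of $\cE_x$. Fiberwise this is the generalized Hecke modification of Proposition \ref{prop:generalizedHeckemodification}, so $\cE'$ is a family of rank $r$, determinant $L(-x)$ bundles.

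Next I would read off the two determinant invariants of $\cE'$ from this sequence. Since $Q\circ\iota=\mathrm{id}$ one has $RQ_*(\iota_*\cO_{\PP(\cE_x)}(1))=\cO_{\PP(\cE_x)}(1)$, hence $\Det(\iota_*\cO_{\PP(\cE_x)}(1))=\cO_{\PP(\cE_x)}(-1)$; together with $\Det((\mathrm{id}\times\pi)^*\cE)=\pi^*\Det(\cE)$ from flat base change and additivity of $\Det$ along the sequence, this gives $\Det(\cE')=\pi^*\Det(\cE)\otimes\cO_{\PP(\cE_x)}(1)$. Taking ordinary determinants of the same sequence, and using that $\rM(r,L)$ is Fano so $\det\cE=P^*L\otimes Q^*\pi^*\Theta^{\ell}$ and $\det(\iota_*\cO_{\PP(\cE_x)}(1))=P^*\cO_X(x)$, gives $\det\cE'=P^*L(-x)\otimes Q^*\pi^*\Theta^{\ell}$, so restricting to $\{x\}\times\PP(\cE_x)$ yields $\det\cE'_x=\pi^*\Theta^{\ell}$. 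Narasimhan's determinant formula (\cite[Proposition 2.1]{Nar17}) on $\rM(r,L)$ gives $\Det(\cE)=\Theta^{\ell(g-1)-e}$ once one uses $\ell d-re=1$ (Lemma \ref{lem:1stwallcrossing}), and the analogous formula on $\rM(r,L(-x))$, namely $\Theta_{\rM(r,L(-x))}^{k}=\Det(\cE')^{r}\otimes(\det\cE'_x)^{(d-1)+r(1-g)}$, then produces $\cO_{\PP(\cE_x)}(r)$ for the $\cO(1)$-part and, after the above substitutions and again invoking $\ell d-re=1$, the exponent $-re+\ell(d-1)=1-\ell$ for the $\pi^*\Theta$-part. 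This is precisely the asserted identity.

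The genuinely delicate point, and the reason for the power $k=(r,d-1)$, is the validity of the determinant formula on $\rM(r,L(-x))$. As $(r,d-1)$ may exceed $1$, this moduli space carries strictly semistable points and in general has no universal bundle, so the determinant line bundle manufactured from the rank-$r$ family $\cE'$ descends only as $\Theta_{\rM(r,L(-x))}^{k}$ rather than as the ample generator itself; this is the Dr\'ezet--Narasimhan description of the Picard group, and it is what forces the $k$-th power into the statement. I would secure the identification by first proving the formula on the large-weight model $\rM(r,L,r-1,1-\epsilon)$, where $\pi_1$ is an honest morphism and $\cE'$ genuinely descends, and then transporting it back across the flips to $\rM(r,L,r-1,a)$; since the flips are isomorphisms in codimension one they identify the Picard groups and leave all the determinant computations untouched. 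I expect this descent bookkeeping -- tracking the exact power $k$ through the Dr\'ezet--Narasimhan correspondence -- to be the main obstacle, whereas the Hecke-sequence computation of $\Det(\cE')$ and $\det\cE'_x$ is routine once the determinant formula is in place.
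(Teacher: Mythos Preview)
Your proposal is correct and follows essentially the same strategy as the paper: build the Hecke-modified family on $X\times\PP(\cE_x)$, then feed its two determinant invariants into Narasimhan's formula \cite[Proposition 2.1]{Nar17}. The only difference is cosmetic: the paper works with the dual Hecke bundle $K(\cE)$ (your $\cE'^{*}$ twisted appropriately) and invokes the formula on $\rM(r,L^{*}(x))\cong\rM(r,L(-x))$, which lets it recycle Lemma~\ref{lem:DetE} for $\Det(\cE^{*})$, whereas you use $\cE'=H(\cE)$ directly and compute $\Det(\cE)=\Theta^{\ell(g-1)-e}$ separately; your route is slightly more direct, and your explicit discussion of why the power $k=(r,d-1)$ appears via Dr\'ezet--Narasimhan descent is more detailed than the paper's treatment.
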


\begin{proof}
The proof is a careful refinement of that of \cite[Proposition 3.3]{Nar17}. We may assume that $a$ is sufficiently small, so $\rM(r, L, r-1, a) \cong \PP(\cE_{x})$. 

Let $p : X \times \PP(\cE_{x}) \to X$ and $q : X \times \PP(\cE_{x}) \to \PP(\cE_{x})$ be two projections and $\pi : X \times \PP(\cE_{x}) \to X \times \rM(r, L)$. Let $i_{x} : \PP (\cE_{x}) \cong x \times \PP(\cE_{x}) \hookrightarrow X \times \PP(\cE_{x})$. Recall that there are two exact sequences that appear on the construction of the Hecke correspondence:
\[
	0 \to H(\cE) \to \pi^{\#}(\cE) \to p^{*}\cO_{x} \otimes q^{*}\cO_{\PP (\cE_{x})}(1) \to 0
\]
and 
\begin{equation}\label{eqn:sesforpiE}
	0 \to \pi^{\#}(\cE^{*}) \to K(\cE) \to i_{x *}(\cO_{\PP(\cE_{x})}(-1)\otimes T_{x}) \to 0.
\end{equation}
Here $\pi^{\#}\cE$ is the pull-back of $\cE$ to $X \times \PP(\cE_{x})$ and $T_{x}$ is the tangent space of $X$ at $x$. 

By \cite[Proposition 2.1]{Nar17}, 
\[
	\Theta_{\rM(r, L(-x))}^{k} = \Theta_{\rM(r, L^{*}(x))}^{k} = \mathrm{Det}(K(\cE))^{r} \otimes (\det K(\cE)|_{z \times \PP(\cE_{x})})^{1-d+r(1-g)}
\]
for any $z \in X$. From \eqref{eqn:sesforpiE}, we have $\mathrm{Det}(\pi^{\#}(\cE^{*})) \otimes \cO_{\PP(\cE_{x})}(1) = \mathrm{Det}(K(\cE))$. Since $\mathrm{Det}(\pi^{\#}(\cE^{*})) = \pi^{\#}\mathrm{Det}(\cE)$ and $\pi^{\#}(\cE^{*})|_{z \times \PP(\cE_{x})} \cong K(\cE)|_{z \times \PP(\cE_{x})}$ for any $z \ne x$, 
\begin{equation}
\begin{split}
	&\mathrm{Det}(K(\cE))^{r} \otimes (\det K(\cE)|_{z \times \PP(\cE_{x})})^{1-d+r(1-g))}\\ 
	&= \mathrm{Det}(K(\cE))^{r}\otimes (\det \pi^{\#}(\cE^{*})|_{z \times \PP(\cE_{x})})^{1-d+r(1-g)}
	= \mathrm{Det}(K(\cE))^{r} \otimes \Theta_{\rM(r, L)}^{-\ell(1-d+r(1-g))}\\
	&= \pi^{\#}(\mathrm{Det}(\cE^{*}))^{r} \otimes \cO_{\PP(\cE_{x})}(r) \otimes \Theta_{\rM(r, L)}^{-\ell(1-d+r(1-g))}\\
	&= \Theta_{\rM(r, L)}^{r\ell(1-g)-re} \otimes \cO_{\PP(\cE_{x})}(r) \otimes \Theta_{\rM(r, L)}^{-\ell(1-d+r(1-g))}
	= \cO_{\PP (\cE_{x})}(r) \otimes \Theta_{\rM(r, L)}^{1-\ell}.
\end{split}
\end{equation}
The second and the fourth equalities follow from the normalization of $\cE$ and Lemma \ref{lem:DetE}, respectively. 
\end{proof}

\begin{corollary}\label{cor:pullbackofample}
Let $k = (r, d-(r-1))$. Then 
\[
	\Theta_{\rM(r, L(-(r-1)y))}^{k} = \cO_{\PP(\cE_{y}^{*})}(r) \otimes \Theta_{\rM(r, L)}^{1+\ell}.
\]
\end{corollary}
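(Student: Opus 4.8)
The plan is to deduce this from Lemma \ref{lem:pullbackofample} applied to the \emph{dual} normalized Poincar\'e bundle, exactly as in the proof of Corollary \ref{cor:nef}. Fix a degree one line bundle $A$ on $X$ and set $\cF := \cE^{*} \otimes p^{*}A \otimes q^{*}\Theta$ and $L' := A^{r} \otimes L^{*}$, so that $\cF$ is the normalized Poincar\'e bundle on $X \times \rM(r, L') \cong X \times \rM(r, L)$ and $\cF_{y} \cong \cE_{y}^{*} \otimes \Theta$. Writing $d' := \deg L' = r - d$, the associated normalizing integer is $\ell' = r - \ell$: indeed $\det(\cF_{y}) = \det(\cE_{y}^{*}) \otimes \Theta^{r} = \Theta^{r-\ell}$, and $(r-\ell)(r-d) \equiv \ell d \equiv 1 \bmod r$ with $0 < r - \ell < r$, so $r-\ell$ is precisely the integer required in the normalization. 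First I would record these numerical matches, including
\[
k' := (r, d' - 1) = (r, r-d-1) = (r, d+1) = (r, d-(r-1)) = k.
\]

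Next I would apply Lemma \ref{lem:pullbackofample} verbatim to $\cF$ at the parabolic point $y$, obtaining
\[
\Theta_{\rM(r, L'(-y))}^{k} = \cO_{\PP(\cF_{y})}(r) \otimes \Theta_{\rM(r, L')}^{1-\ell'},
\]
and then translate each line bundle class back into the objects of the statement. For the first factor, the isomorphism $\PP(\cE_{y}^{*} \otimes \Theta) \cong \PP(\cE_{y}^{*})$ (tensoring by a line bundle does not change the projectivization) carries the relative $\cO(1)$ to $\cO_{\PP(\cE_{y}^{*})}(1) \otimes \Theta$, whence $\cO_{\PP(\cF_{y})}(r) = \cO_{\PP(\cE_{y}^{*})}(r) \otimes \Theta^{r}$. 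Since $\rM(r, L')$ and $\rM(r, L)$ are isomorphic with Picard group $\ZZ\Theta$, any isomorphism sends the ample generator to the ample generator, so $\Theta_{\rM(r, L')} = \Theta$.

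The one genuinely fiddly step, and the place I expect the bookkeeping to require care, is identifying the base $\rM(r, L'(-y))$ with $\rM(r, L(-(r-1)y))$. Here I would chain the standard isomorphisms $\rM(r, M) \cong \rM(r, M^{*})$ and $\rM(r, M) \cong \rM(r, M \otimes N^{r})$ already used in Corollary \ref{cor:nef}: starting from $L'(-y) = A^{r} \otimes L^{*}(-y)$, dualizing gives $A^{-r} \otimes L(y)$, twisting by $A^{r}$ gives $L(y)$, and twisting by $\cO(-y)^{r}$ gives $L(y - ry) = L(-(r-1)y)$. A quick degree check, $\deg L'(-y) = r-d-1 = -(d-(r-1))$, confirms the dualization lands in the correct degree class. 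Each step is an isomorphism of Picard-rank-one Fano varieties and hence preserves the ample generator $\Theta$, so $\Theta_{\rM(r, L'(-y))} = \Theta_{\rM(r, L(-(r-1)y))}$. Substituting the three translations into the displayed formula and using $1 - \ell' = 1 - r + \ell$ yields
\[
\Theta_{\rM(r, L(-(r-1)y))}^{k} = \cO_{\PP(\cE_{y}^{*})}(r) \otimes \Theta^{r} \otimes \Theta^{1-r+\ell} = \cO_{\PP(\cE_{y}^{*})}(r) \otimes \Theta^{1+\ell},
\]
which is the claim. The conceptual content is entirely contained in Lemma \ref{lem:pullbackofample}; the only real risk of error is tracking the twists and signs, the crucial cancellation being $\Theta^{r}\cdot\Theta^{1-r+\ell} = \Theta^{1+\ell}$, which turns the $1-\ell$ of the lemma into the $1+\ell$ of the corollary.
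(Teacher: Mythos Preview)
Your proof is correct and follows essentially the same approach as the paper: apply Lemma~\ref{lem:pullbackofample} to the dual normalized Poincar\'e bundle, identify the normalizing integer as $r-\ell$, and track the $\cO(1)$ twist through $\PP(\cE_{y}^{*}\otimes\Theta)\cong\PP(\cE_{y}^{*})$. The only cosmetic difference is that the paper works directly with $\rM(r,L^{*})$ rather than first twisting by $A^{r}$, which shortens the chain of moduli identifications by one step but is otherwise the same argument.
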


\begin{proof}
Within the identification $\rM(r, L) \cong \rM(r, L^{*})$, the normalized Poincar\'e bundle over $\rM(r, L^{*})$ is $\cE^{*}\otimes \Theta_{\rM(r, L)}$, and $c_{1}(\cE^{*}\otimes \Theta_{\rM(r, L)}) = c_{1}(\Theta_{\rM(r, L)}^{r-\ell})$. So $\rM(r, L, 1, \epsilon) \cong \rM(r, L^{*}, r-1, \epsilon) \cong \PP(\cE_{y}^{*} \otimes \Theta_{\rM(r, L)})$. When $a \to 1$, we obtain a contraction $\rM(r, L^{*}, r-1, a) \to \rM(r, L^{*}(-y)) \cong \rM(r, L(y)) \cong \rM(r, L(-(r-1)y))$. By Lemma \ref{lem:pullbackofample}, 
\[
	\Theta_{\rM(r, L(-(r-1)y))}^{k} = \Theta_{\rM(r, L^{*})}^{1-(r-\ell)} \otimes\cO_{\PP (\cE_{y}^{*} \otimes \Theta)}(r) = \Theta_{\rM(r, L)}^{1 - (r-\ell)} \otimes \cO_{\PP(\cE_{y}^{*})}(r) \otimes \Theta_{\rM(r, L)}^{r} = \cO_{\PP(\cE_{y}^{*})}(r) \otimes \Theta_{\rM(r, L)}^{1+\ell}.
\]
\end{proof}

From now on, $\bp = (x, y)$ and $\bm = (r-1, 1)$. The line bundle $\Theta$ is the pull-back of $\Theta$ by $\rM(r, L, \bm, \ba) \dashrightarrow \rM(r, L)$. 

\begin{lemma}\label{lem:canonicaldivisor}
For a general weight $\ba$, the dualizing bundle of $\rM(r, L, \bm, \ba)$ is
\[
	\omega = \cO(-r, -r) \otimes \Theta^{-2}.
\]
\end{lemma}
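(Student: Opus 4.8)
The plan is to compute the dualizing (canonical) bundle $\omega$ of $\rM(r, L, \bm, \ba)$ by working on the smooth locus where $\ba$ is small, so that $\rM(r, L, \bm, \ba) \cong \PP(\cE_x) \times_{\rM(r,L)} \PP(\cE_y^*)$, and then observing that the canonical class is a birational invariant up to the flips connecting the chambers (which act as isomorphisms in codimension one, hence preserve $\Pic$ and the canonical class under our convention of suppressing pull-backs). Since the two projective bundle structures are built over $\rM(r,L)$, the natural approach is to use the relative Euler sequence twice and the known canonical bundle of $\rM(r,L)$ itself.

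First I would recall that $\rM(r,L)$ is Fano with $-K_{\rM(r,L)} = 2\Theta$, so $\omega_{\rM(r,L)} = \Theta^{-2}$. Next I would apply the standard formula for the relative canonical bundle of a projectivized bundle: for $p_1 : \PP(\cE_x) \to \rM(r,L)$, one has $\omega_{p_1} = \cO_{\PP(\cE_x)}(-r) \otimes p_1^* \det(\cE_x)$, using that $\cE_x$ has rank $r$ (so the fiber dimension is $r-1$) and that with our convention $\PP$ denotes one-dimensional quotients. Then I would compute $\det(\cE_x) = \Theta^\ell$ from the normalization, and similarly handle the second factor $\PP(\cE_y^*)$, whose relative canonical involves $\det(\cE_y^*) = \Theta^{-\ell}$. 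Combining via the fiber-product structure, $\omega = \omega_{p_1} \otimes \omega_{p_2} \otimes (\text{pull-back of } \omega_{\rM(r,L)})$, and after unwinding the $\cO(a,b)$ notation this should collect into $\cO(-r,-r)$ times a power of $\Theta$. Tracking the $\Theta$-contributions carefully — the $+\ell$ from $\det(\cE_x)$, the $-\ell$ from $\det(\cE_y^*)$, and the $-2$ from $\omega_{\rM(r,L)}$ — the $\ell$'s should cancel, leaving exactly $\Theta^{-2}$, in agreement with the claimed $\omega = \cO(-r,-r) \otimes \Theta^{-2}$.

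The main obstacle I expect is bookkeeping rather than conceptual: one must be careful that the $\cO_{\PP(\cE_x)}(1)$ and $\cO_{\PP(\cE_y^*)}(1)$ in the definition of $\cO(a,b)$ are normalized consistently with the quotient convention, and that the twist by $\det$ in the relative Euler sequence is applied with the correct sign. In particular I would double-check the interaction with the birational-invariance step: because the flips are isomorphisms away from loci of codimension $\ge (r-1)(g-1)+1 \ge 2$ (Corollary \ref{cor:codimcenter}), the canonical class computed in the small-weight chamber agrees with the one in the chamber defining $\ba$, which justifies suppressing the pull-backs as promised. The remaining verification is a direct concatenation of these two relative canonical bundle formulas with $\omega_{\rM(r,L)} = \Theta^{-2}$, and confirming the $\ell$-dependence cancels.
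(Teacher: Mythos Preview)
Your proposal is correct and follows essentially the same approach as the paper: reduce to the small-weight chamber where $\rM(r,L,\bm,\ba)\cong \PP(\cE_x)\times_{\rM(r,L)}\PP(\cE_y^*)$ and apply the relative Euler sequence twice, using $\omega_{\rM(r,L)}=\Theta^{-2}$ and $\det\cE_x=\Theta^{\ell}$, $\det\cE_y^*=\Theta^{-\ell}$ so that the $\ell$'s cancel. The paper phrases this as the tower $\PP(\cE_x)\times_{\rM(r,L)}\PP(\cE_y^*)\to \PP(\cE_x)\to \rM(r,L)$ rather than your symmetric fiber-product formula, and it leaves the birational-invariance step implicit, but the content is the same.
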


\begin{proof}
We may assume that $\ba$ is sufficiently small and $\rM(r, L, \bm, \ba) \cong \PP(\cE_{x}) \times_{\rM(r, L)}\PP(\cE_{y}^{*})$. It follows from the relative Euler sequence applied for $\PP(\cE_{x}) \to \rM(r, L)$ and $\PP(\cE_{x}) \times_{\rM(r, L)}\PP(\cE_{y}^{*}) \to \PP(\cE_{x})$. 
\end{proof}

\begin{proposition}\label{prop:effectivecone}
Let $\ba$ be a general weight. The effective cone of $\rM(r, L, \bm, \ba)$ is generated by four extremal rays
\[
	\Theta, \cO(r, 0) \otimes \Theta^{1-\ell}, \cO(0, r) \otimes \Theta^{1+\ell}, \cO(r, r) \otimes \Theta.
\]
\end{proposition}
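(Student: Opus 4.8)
The plan is to identify the four proposed classes as the divisors associated, via Proposition \ref{prop:weightanddivisor}, to the four corners of the weight square $[0,1]^2$, and then argue that these corners generate the effective cone because the linear isomorphism in that proposition sends the cone over $[0,1]^2$ bijectively onto $\Eff(\rM(r, L, \bm, \ba))$. Since $\Eff$ is a $2$-dimensional cone sitting inside the rank-$2$ group $\rN^1(\rM(r, L, \bm, \ba))_\RR$ (the Picard rank is $2$ because $\rM(r, L)$ has Picard rank $1$ and we have added the two fiber classes of the Grassmannian bundles, which here are $\PP^{r-1}$-bundles), the effective cone is spanned by exactly two extremal rays. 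The subtlety is that the paper lists \emph{four} generators, so the claim must be that these four classes lie on the boundary rays $\partial[0,1]^2$ and that, after passing to $\rN^1$, they collapse onto the two extremal rays of the (closed) effective cone. I would therefore first pin down precisely which corner of the square maps to which divisor class.

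First I would invoke Proposition \ref{prop:weightanddivisor} to get the linear isomorphism from the cone over $[0,1]^2$ onto $\Eff(\rM(r, L, \bm, \ba))$, sending a weight $\ba' = (a_x, a_y)$ to its associated line bundle $L_{\ba'}$. The four corners $(0,0)$, $(1,0)$, $(0,1)$, $(1,1)$ map to the four boundary divisors, and I would match them to the four listed classes as follows. The corner where both weights vanish corresponds, via Example \ref{ex:smallweight} and the forgetful contractions to $\rM(r, L)$, to a multiple of $\Theta$. The corner with $a_x \to 1$, $a_y = 0$ corresponds to the generalized Hecke contraction (Proposition \ref{prop:generalizedHeckemodification}) forgetting the $y$-flag and pushing the $x$-flag all the way; by Lemma \ref{lem:pullbackofample} the associated class on $\rM(r, L, \bm, \ba)$ is $\cO(r, 0)\otimes \Theta^{1-\ell}$ up to the factor $k$, which is immaterial for spanning a ray. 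Symmetrically, $a_x = 0$, $a_y \to 1$ gives the class $\cO(0, r)\otimes\Theta^{1+\ell}$ by Corollary \ref{cor:pullbackofample}. Finally the corner $(1,1)$, where both weights approach one, yields the class $\cO(r, r)\otimes\Theta$; I would obtain its coefficients by combining the two one-variable computations, since pushing both flags simultaneously adds the line-bundle contributions from Lemma \ref{lem:pullbackofample} and Corollary \ref{cor:pullbackofample} and one checks that the two $\Theta$-powers $1-\ell$ and $1+\ell$ combine with the extra $\Theta^{2\ell}$ coming from the interaction to give total $\Theta$-exponent $1$.

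The main obstacle I anticipate is the bookkeeping at the corner $(1,1)$: verifying that the class associated to pushing \emph{both} parabolic flags is exactly $\cO(r, r)\otimes\Theta$ rather than some other power of $\Theta$. The two single-flag Hecke computations are not simply additive because the determinant twist $L(-(r-1)y)$ and $L(-x)$ change the normalization of the Poincar\'e bundle, and one must track how $\ell$ (defined mod $r$) transforms under these twists. I would handle this by running the argument of Lemma \ref{lem:pullbackofample} on the two-flag space directly, writing down the relevant Hecke exact sequences at $x$ and at $y$ simultaneously and applying \cite[Proposition 2.1]{Nar17} together with Lemma \ref{lem:DetE}, so that the $\Theta$-exponent is computed in one pass rather than by naively summing the two corner answers.

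Once the four classes are matched to the four corners, the conclusion is immediate: the corners are the extreme points of the square $[0,1]^2$, so under the linear isomorphism of Proposition \ref{prop:weightanddivisor} their images generate the cone over $[0,1]^2$, and hence they generate $\Eff(\rM(r, L, \bm, \ba))$. Because a cone over a square is spanned exactly by its four corner rays, and no one of the four listed classes lies in the cone generated by the other three (this I would confirm from the matched corner positions, which are genuinely the extreme points of the square), the four rays are precisely the extremal rays of the effective cone, completing the proof.
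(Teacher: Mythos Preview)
Your overall strategy---invoke Proposition \ref{prop:weightanddivisor} and identify the divisors at the four corners of $[0,1]^2$---is exactly the paper's approach. But there is a genuine error at the start that contaminates the discussion: the Picard rank of $\rM(r, L, \bm, \ba)$ is $3$, not $2$. With two parabolic points and $\bm = (r-1,1)$, the small-weight model is $\PP(\cE_x)\times_{\rM(r,L)}\PP(\cE_y^*)$, a fiber product of two $\PP^{r-1}$-bundles over a Picard-rank-one base, so $\rho = 1+2 = 3$. Consequently the cone over $[0,1]^2$ is genuinely $3$-dimensional and its image in $\rN^1$ does not ``collapse onto two extremal rays''; it is a full-dimensional cone with exactly four extremal rays, one for each corner of the square. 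Your later paragraph (``a cone over a square is spanned exactly by its four corner rays'') is correct, and contradicts your earlier claim; you should simply delete the rank-$2$ discussion.

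For the corner $(1,1)$ the paper does \emph{not} compute the double Hecke directly. Instead it exploits that the map $\ba \mapsto L_\ba$ is affine-linear and uses, in addition to the three corners $(0,0),(1,0),(0,1)$, the two interior boundary points $(1/\ell,0)$ and $(0,1/(r-\ell))$ coming from the nef cone (Section \ref{sec:nef}, Corollaries \ref{cor:nefcone} and \ref{cor:nefconedual}), whose associated divisors are multiples of $\cO(1,0)$ and $\cO(0,1)\otimes\Theta$. These extra points fix the relative scalings---knowing three corner classes only up to positive multiples is not enough to determine $L_{(1,1)}$---and one reads off $L_\ba \propto \Theta \otimes (\cO(r,0)\otimes\Theta^{-\ell})^{a_x}\otimes(\cO(0,r)\otimes\Theta^{\ell})^{a_y}$, hence $L_{(1,1)} \propto \cO(r,r)\otimes\Theta$. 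Your proposed direct computation via simultaneous Hecke sequences at $x$ and $y$ could also be made to work, but the heuristic you offer (an ``extra $\Theta^{2\ell}$ from the interaction'') is not correct: the exponent $1$ comes out cleanly as $(1-\ell)+(1+\ell)-1$ once the scalings are fixed by linearity, with no interaction term.
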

\begin{proof}
By Proposition \ref{prop:weightanddivisor}, it is sufficient to find four divisors associated to four extremal parabolic weights. When $\ba = (a_{x}, a_{y}) = (0, 0)$, the associated rational contraction is $\rM(r, L)$ and the associated divisor is a scalar multiple of $\Theta$. When $\ba = (1/\ell, 0)$, by Section \ref{sec:nef}, the associated divisor is a multiple of $\cO(1, 0)$. When $\ba = (1, 0)$, the associated rational contraction is $\rM(r, L(-x))$ and the associated divisor is a scalar multiple of $\cO(r, 0) \otimes \Theta^{1-\ell}$ by Lemma \ref{lem:pullbackofample}. For $\ba = (0, 1/(r-\ell))$, we have a multiple of $\cO(0, 1) \otimes \Theta$. Finally, for $\ba = (0, 1)$, a multiple of $\cO(0, r) \otimes \Theta^{1+\ell}$ is associated. 

By an elementary computation, for each point $\ba \in [0, 1]^{2}$, the associated divisor can be written as a multiple of $\Theta \otimes (\cO(r, 0) \otimes \Theta^{-\ell})^{a_{x}} \otimes (\cO(0, r) \otimes \Theta^{\ell})^{a_{y}}$. Thus, the last extremal ray, which is associated to $\ba = (1, 1)$, is $\cO(r, r) \otimes \Theta$. 
\end{proof}

\begin{corollary}\label{cor:positivity}
For some general parabolic weight $\ba$, $\cO(r+1, r+1) \otimes \Theta^{2}$ is nef and big on $\rM(r, L, \bm, \ba)$.
\end{corollary}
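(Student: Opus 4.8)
The plan is to express the class $\cO(r+1,r+1)\otimes\Theta^{2}$ as an interior point of the effective cone $\Eff(\rM(r, L, \bm, \ba))$ computed in Proposition \ref{prop:effectivecone}, and then to exploit the structure of this space to upgrade ``interior of the effective cone'' to ``nef and big.'' Since $\rho(\rM(r, L, \bm, \ba)) = 3$ (the Picard rank is $\rho(\rM(r,L))+2 = 3$, as we add one $\PP^{s}$-direction for each of the two parabolic points), the nef cone is a full-dimensional subcone of the effective cone, and our target class need only be shown to lie in its interior.

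First I would write $\cO(r+1,r+1)\otimes\Theta^{2}$ as a positive combination of the four generators $\Theta$, $\cO(r,0)\otimes\Theta^{1-\ell}$, $\cO(0,r)\otimes\Theta^{1+\ell}$, $\cO(r,r)\otimes\Theta$ from Proposition \ref{prop:effectivecone}. A direct linear-algebra check in the basis $\{\cO(1,0),\cO(0,1),\Theta\}$ shows that matching the $\cO(1,0)$- and $\cO(0,1)$-coefficients forces equal weights on the two ``pure'' rays, and a short computation (which I will not grind through) exhibits $\cO(r+1,r+1)\otimes\Theta^{2}$ as a strictly positive combination of all four extremal rays. This places the class in the topological interior of $\Eff$, which by Proposition \ref{prop:weightanddivisor} corresponds to a general interior parabolic weight $\ba$, and hence the associated rational map $\rM(r, L, \bm, \ba)\dashrightarrow \rM(r, L, \bm, \ba)(D)$ is birational; in particular the class is big.

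To obtain nefness I would use the main example of Section \ref{sec:wallcrossing} together with Section \ref{sec:nef}. Because $\ba$ can be chosen general and our class sits strictly inside $\Eff$, for an appropriate choice of weight the space $\rM(r, L, \bm, \ba)$ is the model on which $D := \cO(r+1,r+1)\otimes\Theta^{2}$ is actually semi-ample: the point is to pick the chamber whose ample cone contains the ray through $D$. Concretely, the four boundary rays of $\Eff$ correspond (via Example \ref{ex:forgetfulmap}, Proposition \ref{prop:generalizedHeckemodification}, and Section \ref{sec:nef}) to the genuine contractions at the corners of $[0,1]^{2}$, so every interior ray is the pullback of an ample class under the birational contraction to the corresponding GIT model; pullbacks of ample classes under birational morphisms are nef. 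Since $D$ is strictly interior, it is nef and big.

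The main obstacle I expect is the bookkeeping that identifies the correct model and shows that $D$ is nef on \emph{all} of $\rM(r, L, \bm, \ba)$ rather than merely on one wall-crossing chamber: across a flip the nef cones rotate, so I must verify that the ray through $\cO(r+1,r+1)\otimes\Theta^{2}$ lands in a single chamber's movable-and-nef region, and that the coefficient check places it safely away from every wall $\Delta(s,e,\bn)$. This requires knowing that the wall structure in the $(a_{x},a_{y})$-square (the negative and positive walls described in the Main Example subsection) does not pass through the specific weight $\ba$ dictated by matching $\cO(r+1,r+1)\otimes\Theta^{2}$; genericity of $\ba$ and the strict positivity of all four coefficients in the first step together rule this out, but the verification that the relevant weight is genuinely in the open chamber — not on a wall — is the delicate part.
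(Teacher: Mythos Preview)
Your approach is essentially the paper's: place $\cO(r+1,r+1)\otimes\Theta^{2}$ in the interior of $\Eff$ via Proposition \ref{prop:effectivecone}, then choose the birational model on which it becomes nef and big. The paper dispatches your ``main obstacle'' in a single clause by invoking Corollary \ref{cor:codimcenter} (every wall-crossing is a flip, hence no divisorial contractions), which makes the effective cone coincide with the movable cone and guarantees that any interior class is ample---in particular nef and big---on the model $\rM(r,L,\bm,\ba)$ corresponding to its Mori chamber; no further bookkeeping about walls is needed.
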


\begin{proof}
The statement is immediate from the fact that the line bundle is on the interior of the effective cone, and there is no divisorial contraction in the wall-crossing (Corollary \ref{cor:codimcenter}).
\end{proof}

Recall that a normal $\QQ$-factorial variety is of Fano type if there is an effective $\QQ$-divisor $\Delta$ such that $-(K + \Delta)$ is ample and $(X, \Delta)$ is a klt pair. 

\begin{corollary}\label{cor:Fanotype}
For any general $\ba$, the moduli space $\rM(r, L, \bm, \ba)$ is of Fano type.
\end{corollary}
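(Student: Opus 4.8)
The plan is to show that $\rM(r, L, \bm, \ba)$ is of Fano type by exhibiting an explicit boundary divisor $\Delta$ making $-(K + \Delta)$ ample while keeping the pair klt. The key input is already in hand: by Corollary \ref{cor:positivity}, the line bundle $\cO(r+1, r+1) \otimes \Theta^{2}$ is nef and big, and by Lemma \ref{lem:canonicaldivisor} the anticanonical class is $-\omega = \cO(r, r) \otimes \Theta^{2}$. These two classes are close, and the difference $-\omega - (\cO(r+1,r+1)\otimes\Theta^2)$ amounts to $\cO(-1,-1)$, so morally $-K$ sits just inside the movable/effective cone. My strategy is to write $-K$ as the sum of a genuinely ample class and a small effective $\QQ$-divisor supported on the boundary of $\Eff(\rM(r,L,\bm,\ba))$, then invoke a standard criterion for Fano type.

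First I would recall the criterion I intend to use: a normal $\QQ$-factorial projective variety $V$ is of Fano type if and only if there is a big and nef (equivalently, an ample, after perturbation) $\QQ$-divisor $N$ and an effective $\QQ$-divisor $B$ with $-K_V \sim_{\QQ} N + B$ such that $(V, B)$ is klt; more robustly, it suffices to find one klt pair $(V, \Delta)$ with $-(K_V + \Delta)$ ample. I would produce $\Delta$ as a small positive combination of the four extremal effective generators listed in Proposition \ref{prop:effectivecone}, namely $\Theta$, $\cO(r,0)\otimes\Theta^{1-\ell}$, $\cO(0,r)\otimes\Theta^{1+\ell}$, and $\cO(r,r)\otimes\Theta$. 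Because $-\omega = \cO(r,r)\otimes\Theta^2$ lies in the interior of $\Eff$ (this is what Corollary \ref{cor:positivity} effectively certifies, since $\cO(r+1,r+1)\otimes\Theta^2$ is already big and nef and $-\omega$ dominates a positive class below it), for a sufficiently small $\epsilon > 0$ the class $-\omega - \Delta$ with $\Delta := \epsilon(\Theta + \cO(r,0)\otimes\Theta^{1-\ell} + \cO(0,r)\otimes\Theta^{1+\ell} + \cO(r,r)\otimes\Theta)$ remains in the interior of the nef cone, hence is ample by the birational contractions supplied by the wall-crossing picture. The numerical bookkeeping here is routine and I would not carry it out in full; the content is that $-\omega$ is interior to $\Eff$, which is exactly Corollary \ref{cor:positivity}.

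The step I expect to require the most care is verifying that the pair $(\rM(r, L, \bm, \ba), \Delta)$ is klt. Here I would choose the representatives of the four extremal classes to be \emph{general} effective divisors in their linear systems, so that their supports meet the moduli space transversally away from high-codimension loci, and then take $\Delta$ with small enough coefficients $\epsilon$ that no log canonical threshold is violated. The cleanest way to guarantee kltness is to note that $\rM(r, L, \bm, \ba)$ has only the mild singularities coming from the GIT construction (it is normal and, for general $\ba$, smooth by Definition \ref{def:modulispaceparabolicbundle}), so that for general members and small coefficients the discrepancies stay above $-1$ automatically. When $\ba$ is general the space is nonsingular, which removes the singularity issue entirely and reduces kltness to the statement that $\Delta$ has coefficients in $(0,1)$ and simple normal crossing support, achievable by generality of the chosen divisors.

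Finally I would assemble the pieces: $-K = -\omega$ is interior to $\Eff$ by Corollary \ref{cor:positivity}; subtracting the small effective $\Delta$ leaves $-(K+\Delta)$ ample by openness of the ample cone and the absence of divisorial contractions in the wall-crossing (Corollary \ref{cor:codimcenter}, which ensures all contractions are small so that ampleness can be tested numerically); and $(\rM(r,L,\bm,\ba), \Delta)$ is klt by the generality and smallness choices above. This exhibits the required klt pair and concludes that $\rM(r, L, \bm, \ba)$ is of Fano type. The main obstacle, as noted, is the kltness verification, but it is defused by working with general $\ba$ (hence a smooth space) and arbitrarily small boundary coefficients.
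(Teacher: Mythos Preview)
Your argument has a genuine gap at the step where you claim that $-(K+\Delta)$ is ample for small $\Delta$. You assert that ``the class $-\omega - \Delta$ \ldots\ remains in the interior of the nef cone,'' but the nef cone of $\rM(r,L,\bm,\ba)$ depends on $\ba$: by Proposition~\ref{prop:weightanddivisor} it is the single chamber in the wall-chamber decomposition of the cone over $[0,1]^2$ that contains $\ba$. The class $-K = \cO(r,r)\otimes\Theta^{2}$ lies in the interior of the \emph{effective} (equivalently, movable) cone, but for most general $\ba$ it does \emph{not} lie in the nef cone of $\rM(r,L,\bm,\ba)$. Subtracting a small effective $\Delta$ perturbs $-K$ only slightly and will not carry it across several walls into the chamber of $\ba$; and if you take $\Delta$ large enough to reach that chamber, you lose control of the klt condition. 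Your appeal to ``absence of divisorial contractions'' and ``ampleness can be tested numerically'' conflates ampleness on a fixed model with membership in the movable cone; these are different notions.

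Your reasoning is correct for exactly one chamber: if $\ba'$ is chosen so that $-K$ lies in the nef cone of $\rM(r,L,\bm,\ba')$, then $-K$ is nef and big there, the variety is smooth weak Fano, and your small-$\Delta$ trick (or simply taking $\Delta=0$ in the nef-and-big formulation) shows it is of Fano type. The paper proceeds exactly this way for $\ba'$, and then handles every other general $\ba$ by observing that $\rM(r,L,\bm,\ba)$ is connected to $\rM(r,L,\bm,\ba')$ by finitely many flips (Corollary~\ref{cor:codimcenter}) and invoking \cite[Theorem 1.1]{GOST15}, which says the Fano-type property is preserved under such small modifications. This last step is the missing ingredient in your argument.
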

\begin{proof}
It also follows from the fact that $\omega^{*} = \cO(-K) = \cO(r,r) \otimes \Theta^{2}$ is on the interior of the effective cone. If we pick a general weight $\ba'$ such that the nef cone of $\rM(r, L, \bm, \ba')$ includes $-K$, then $-K$ is nef and big, so $\rM(r, L, \bm, \ba')$ is a smooth weakly Fano variety, hence of Fano type. For a general $\ba \in (0, 1)^{2}$, $\rM(r, L, \bm, \ba)$ is obtained from $\rM(r, L, \bm, \ba')$ by applying finitely many flips. Therefore it is of Fano type by \cite[Theorem 1.1]{GOST15}. 
\end{proof}

\begin{corollary}\label{cor:vanishing}
For $0 < i < (r-1)(g-1)$, $\rH^{i}(\rM(r, L, \bm, (\epsilon, \epsilon)), \cO(1, 1)) = 0$. 
\end{corollary}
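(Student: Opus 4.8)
The plan is to realize $\cO(1,1)$ as $K + (\text{nef and big})$ on a \emph{different} birational model and then transport the resulting full Kodaira-type vanishing back to the weight $(\epsilon, \epsilon)$ by exploiting the high codimension of the flipping centers. First I would record that $\rM(r, L, \bm, (\epsilon, \epsilon)) \cong \PP(\cE_{x}) \times_{\rM(r, L)} \PP(\cE_{y}^{*})$ is smooth, and that by the discussion preceding Lemma \ref{lem:DetE} it is connected to the model $\rM(r, L, \bm, \ba')$ of Corollary \ref{cor:positivity} by a finite chain of flips, under which the Picard groups are identified and the symbol $\cO(1,1)$ denotes the common strict transform. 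Each such flip, by Corollary \ref{cor:codimcenter}, has wall-crossing centers of codimension at least $(r-1)(g-1)+1$.

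For the first step, I would apply Kawamata--Viehweg vanishing on the good model. By Lemma \ref{lem:canonicaldivisor} the dualizing sheaf is $\omega = \cO(-r, -r) \otimes \Theta^{-2}$, so that
\[
	\cO(1,1) = \omega \otimes \bigl(\cO(r+1, r+1) \otimes \Theta^{2}\bigr).
\]
Choosing $\ba'$ as in Corollary \ref{cor:positivity}, the factor $N := \cO(r+1, r+1) \otimes \Theta^{2}$ is nef and big on the smooth projective variety $\rM(r, L, \bm, \ba')$, so Kawamata--Viehweg gives $\rH^{i}(\rM(r, L, \bm, \ba'), \omega \otimes N) = \rH^{i}(\rM(r, L, \bm, \ba'), \cO(1,1)) = 0$ for \emph{all} $i > 0$.

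The second step is to transfer this vanishing to the weight $(\epsilon, \epsilon)$ in the restricted range. For a single flip $V^{-} \dashrightarrow V^{+}$ with common open locus $U = V^{-} \setminus Y^{-} = V^{+} \setminus Y^{+}$, where $\codim Y^{\pm} \ge (r-1)(g-1)+1 =: c$, the local cohomology of the locally free sheaf $\cO(1,1)$ along $Y^{\pm}$ vanishes below degree $c$, so restriction to $U$ induces isomorphisms $\rH^{i}(V^{\pm}, \cO(1,1)) \xrightarrow{\sim} \rH^{i}(U, \cO(1,1))$ for $i < c - 1 = (r-1)(g-1)$. Chaining these isomorphisms through the flips connecting $\ba'$ to $(\epsilon, \epsilon)$ yields $\rH^{i}(\rM(r, L, \bm, (\epsilon, \epsilon)), \cO(1,1)) \cong \rH^{i}(\rM(r, L, \bm, \ba'), \cO(1,1)) = 0$ precisely for $0 < i < (r-1)(g-1)$, which is the asserted range. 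The main obstacle is the careful bookkeeping of Step two: one must verify that the line bundle $\cO(1,1)$ on the two models genuinely restricts to the same sheaf on each common open locus (this is where the identification of Picard groups across flips, together with flips being isomorphisms in codimension one, is used) and that the local-cohomology vanishing applies uniformly across every flip in the chain; the sharp numerical match between the codimension bound $(r-1)(g-1)+1$ and the cohomological range is what makes the argument produce exactly the stated interval rather than full vanishing.
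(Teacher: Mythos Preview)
Your proposal is correct and follows essentially the same approach as the paper: write $\cO(1,1) = \omega \otimes (\cO(r+1,r+1)\otimes\Theta^{2})$, apply Kawamata--Viehweg on the model $\rM(r,L,\bm,\ba')$ from Corollary~\ref{cor:positivity} where the twist is nef and big, and then transport the vanishing back to $(\epsilon,\epsilon)$ using that the flipping centers have codimension at least $(r-1)(g-1)+1$. Your local-cohomology bookkeeping in Step~2 is exactly what underlies the paper's citation of \cite[III.~Lemma~3.1]{Gro05} and \cite[Theorem~3.8]{Har67}, and your computed range $i < (r-1)(g-1)$ matches.
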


\begin{proof}
Since $\cO(1, 1) = \cO(r+1, r+1)\otimes \Theta^{2} \otimes \omega$, for $\ba$ in Corollary \ref{cor:positivity}, $\rH^{i}(\rM(r, L, \bm, \ba), \cO(1, 1)) = 0$ for $i > 0$ by the Kawamata-Viehweg vanishing theorem. Since $\rM(r, L, \bm, \ba)$ and $\rM(r, L, \bm, (\epsilon, \epsilon))$ are connected by finitely many flips with the flipping centers of codimension $\ge (r-1)(g-1) + 1$ (Corollary \ref{cor:codimcenter}), $\rH^{i}(\rM(r, L, \bm, (\epsilon, \epsilon)), \cO(1, 1)) = \rH^{i}(\rM(r, L, \bm, \ba), \cO(1, 1))$ for $i < (r-1)(g-1)$ (\cite[III. Lemma 3.1]{Gro05}, \cite[Theorem 3.8]{Har67}). 
\end{proof}

\begin{corollary}\label{cor:vanishingforEx}
For $i < (r-1)(g-1)$, $\rH^{i}(\rM(r, L), \cE_{x}\otimes \cE_{y}^{*}) = 0$. 
\end{corollary}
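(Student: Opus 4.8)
The goal is to translate the cohomology of $\cE_{x} \otimes \cE_{y}^{*}$ on $\rM(r, L)$ into the cohomology of a line bundle on the fiber product $\PP(\cE_{x}) \times_{\rM(r, L)} \PP(\cE_{y}^{*})$, where the vanishing of Corollary \ref{cor:vanishing} is available. My plan is to use the two projective bundle structures together with the projection formula. Let $W := \PP(\cE_{x}) \times_{\rM(r, L)} \PP(\cE_{y}^{*})$ with the natural map $\rho : W \to \rM(r, L)$, and recall that $W \cong \rM(r, L, \bm, (\epsilon, \epsilon))$ by Example \ref{ex:smallweight}. The key input is that pushing forward the relative $\cO(1)$ of each projective bundle recovers the corresponding bundle. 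Precisely, for the structure map $q_{x} : \PP(\cE_{x}) \to \rM(r, L)$ one has $q_{x*}\cO_{\PP(\cE_{x})}(1) \cong \cE_{x}$, and for $q_{y} : \PP(\cE_{y}^{*}) \to \rM(r, L)$ one has $q_{y*}\cO_{\PP(\cE_{y}^{*})}(1) \cong \cE_{y}^{*}$, both with vanishing higher direct images.

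First I would compute $R\rho_{*}\cO(1, 1)$. Since $W$ is the fiber product of two projective bundles over $\rM(r, L)$, I can factor $\rho$ through $p_{1} : W \to \PP(\cE_{x})$. The line bundle $\cO(1,1) = p_{1}^{*}\cO_{\PP(\cE_{x})}(1) \otimes p_{2}^{*}\cO_{\PP(\cE_{y}^{*})}(1)$, and $p_{1}$ is itself the pullback of the projective bundle $q_{y}$, so by flat base change $Rp_{1*}\cO(1,1) \cong \cO_{\PP(\cE_{x})}(1) \otimes q_{x}^{*}\cE_{y}^{*}$ with no higher direct images. Pushing forward again by $q_{x}$ and using the projection formula, together with $q_{x*}\cO_{\PP(\cE_{x})}(1) \cong \cE_{x}$ and $R^{j}q_{x*}\cO_{\PP(\cE_{x})}(1) = 0$ for $j > 0$, yields $R\rho_{*}\cO(1,1) \cong \cE_{x} \otimes \cE_{y}^{*}$ concentrated in degree zero.

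With $R\rho_{*}\cO(1,1) \cong \cE_{x} \otimes \cE_{y}^{*}$ and all higher direct images vanishing, the Leray spectral sequence degenerates and gives an isomorphism
\[
	\rH^{i}(\rM(r, L), \cE_{x} \otimes \cE_{y}^{*}) \cong \rH^{i}(W, \cO(1,1))
\]
for every $i$. Combining this with Corollary \ref{cor:vanishing}, which asserts $\rH^{i}(\rM(r, L, \bm, (\epsilon, \epsilon)), \cO(1,1)) = 0$ for $0 < i < (r-1)(g-1)$, gives the claimed vanishing $\rH^{i}(\rM(r, L), \cE_{x} \otimes \cE_{y}^{*}) = 0$ in that same range.

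The main technical point to verify carefully is the degeneracy of the higher direct images, that is, that $R^{j}\rho_{*}\cO(1,1) = 0$ for $j > 0$; this is where the repeated use of the projective bundle structure and flat base change does the work, and it relies on the fact that the relative $\cO(1)$ of a projective bundle has no higher cohomology along the fibers. There is no genuine obstacle here beyond bookkeeping, since both $q_{x}$ and $q_{y}$ are honest projective bundles and $\cO(1)$ restricts to the tautological line bundle on each fiber $\PP^{r-2}$ (respectively $\PP^{r-2}$ for the dual), whose higher cohomology vanishes. One subtlety worth flagging is the normalization convention for $\PP$ of one-dimensional quotients and the resulting identification $q_{y*}\cO_{\PP(\cE_{y}^{*})}(1) \cong \cE_{y}^{*}$; I would state this compatibly with the paper's convention so that the first Chern class bookkeeping used in Lemma \ref{lem:intersection2} and Corollary \ref{cor:pullbackofample} matches.
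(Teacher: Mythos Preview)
Your argument for the range $0 < i < (r-1)(g-1)$ is correct and is precisely the Leray spectral sequence step the paper invokes; you have simply spelled out the pushforward $R\rho_{*}\cO(1,1) \cong \cE_{x}\otimes\cE_{y}^{*}$ that the paper leaves implicit.

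However, there is a genuine gap: the statement asks for all $i < (r-1)(g-1)$, which includes $i = 0$, and Corollary \ref{cor:vanishing} says nothing about $\rH^{0}$. Your own summary (``in that same range'') acknowledges this, but you never return to close the $i=0$ case. The paper treats it by a completely different mechanism: $\cE_{x}$ and $\cE_{y}$ are stable bundles of the same slope (\cite[Proposition 2.1]{LN05}) and are non-isomorphic for $x \ne y$ (\cite[Theorem]{LN05}), so $\rH^{0}(\rM(r,L), \cE_{x}\otimes\cE_{y}^{*}) = \rHom(\cE_{y}, \cE_{x}) = 0$. This cannot be extracted from Kawamata--Viehweg or from the projective-bundle bookkeeping, and must be supplied separately; note also that it is here (and only here) that the hypothesis $x \ne y$ enters.

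A minor slip, harmless for the argument: the fibers of $\PP(\cE_{x})$ and $\PP(\cE_{y}^{*})$ are $\PP^{r-1}$, not $\PP^{r-2}$, since $\cE_{x}$ has rank $r$.
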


\begin{proof}
For $0 < i < (r-1)(g-1)$, it follows from Corollary \ref{cor:vanishing} and the Leray spectral sequence. For $i = 0$, it follows from the stability of $\cE_{x}$, $\cE_{y}$ (\cite[Proposition 2.1]{LN05}), and the fact that $\cE_{x} \ne \cE_{y}$ if $x \ne y$ (\cite[Theorem]{LN05}). 
\end{proof}

Finally, the vanishing of the higher cohomology groups are obtained by Le Potier vanishing theorem (\cite[Theorem 7.3.5]{Laz04}). 

\begin{proposition}\label{prop:highervanishing}
For $i \ge r^{2}$, $\rH^{i}(\rM(r, L), \cE_{x}\otimes \cE_{y}^{*}) = 0$. 
\end{proposition}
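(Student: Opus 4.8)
The plan is to prove the vanishing $\rH^i(\rM(r, L), \cE_x \otimes \cE_y^*) = 0$ for $i \ge r^2$ by converting it into a statement about a line bundle on the fiber product $\PP(\cE_x) \times_{\rM(r,L)} \PP(\cE_y^*) \cong \rM(r, L, \bm, (\epsilon, \epsilon))$ and then applying the Le Potier vanishing theorem. The first step is to use the Leray spectral sequence for the projection $q : \PP(\cE_x) \times_{\rM(r,L)} \PP(\cE_y^*) \to \rM(r, L)$. Since $\PP(\cE_x) \to \rM(r,L)$ is a $\PP^{r-1}$-bundle and $\PP(\cE_y^*) \to \rM(r,L)$ is a $\PP^{r-1}$-bundle, the fiber of $q$ is $\PP^{r-1} \times \PP^{r-1}$, and by the projection formula together with $Rq_* \cO(1,1) = \cE_x \otimes \cE_y^*$ (using $q_* \cO_{\PP(\cE_x)}(1) = \cE_x$ and the analogous statement for $\PP(\cE_y^*)$, with all higher pushforwards vanishing), I obtain $\rH^i(\rM(r,L), \cE_x \otimes \cE_y^*) \cong \rH^i(\rM(r, L, \bm, (\epsilon,\epsilon)), \cO(1,1))$. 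This is exactly the identification already used in Corollary \ref{cor:vanishingforEx}, so the problem reduces to showing $\rH^i(\rM(r, L, \bm, (\epsilon, \epsilon)), \cO(1,1)) = 0$ for $i \ge r^2$.

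Next I would rewrite $\cO(1,1)$ in a form amenable to Le Potier vanishing. Recall from Lemma \ref{lem:canonicaldivisor} that $\omega = \cO(-r,-r) \otimes \Theta^{-2}$, so $\cO(1,1) = \omega \otimes \cO(r+1, r+1) \otimes \Theta^2$. The Le Potier vanishing theorem states that for an ample (or nef and big) vector bundle-type positivity condition, the cohomology $\rH^i(V, \omega_V \otimes (\text{positive twist}))$ vanishes in a range of degrees $i$ above the dimension minus the amplitude of the positivity. The natural approach is to express the positive twist $\cO(r+1, r+1) \otimes \Theta^2$ as a symmetric power or determinantal twist coming from an ample vector bundle on $\rM(r,L)$, so that Le Potier's theorem applies. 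Concretely, since $\cO_{\PP(\cE_x)}(1)$ and $\cO_{\PP(\cE_y^*)}(1) \otimes \pi^* \Theta$ are the relative hyperplane classes of projectivizations of bundles whose twists by $\Theta$ are positive (by Corollaries \ref{cor:nefcone} and \ref{cor:nefconedual}), the line bundle $\cO(r+1, r+1) \otimes \Theta^2$ should be realizable as the pullback of a tensor/symmetric power of a genuinely ample vector bundle. The amplitude of such a bundle, combined with $\dim \rM(r, L, \bm, (\epsilon,\epsilon)) = (r^2 - 1)(g-1) + 2(r-1)$, should produce the vanishing for $i$ above a threshold; the bound $i \ge r^2$ in the statement presumably emerges from comparing this threshold against the flipping-center codimension $(r-1)(g-1)+1$.

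The plan then is to transfer the Le Potier vanishing, established on a weakly Fano model $\rM(r, L, \bm, \ba)$ for suitable general $\ba$, back to the small-weight model $\rM(r, L, \bm, (\epsilon, \epsilon))$ using the flip-invariance of cohomology in the appropriate degree range. Because the flipping centers have codimension at least $(r-1)(g-1)+1$ by Corollary \ref{cor:codimcenter}, cohomology groups $\rH^i$ and $\rH^{\dim - i}$ (via Serre duality) agree across the flips for $i$ sufficiently far from the middle dimension, exactly as in the argument of Corollary \ref{cor:vanishing}. The degrees $i \ge r^2$ sit high enough that, after Serre duality pairs them with low degrees $\dim - i \le \dim - r^2 = (r^2-1)(g-1) + 2(r-1) - r^2$, the flip-invariance applies; I would check that this dual degree is below the codimension bound $(r-1)(g-1)$ so that the transfer is legitimate.

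The hard part will be the bookkeeping in the second step: pinning down the exact ample vector bundle on $\rM(r,L)$ whose symmetric/tensor power yields $\cO(r+1, r+1) \otimes \Theta^2$, and computing its amplitude precisely enough to extract the clean threshold $i \ge r^2$. The positivity inputs are all available (strict nefness of $\cE_x$ and $\cE_x^* \otimes \Theta$ from Theorem \ref{thm:nef} and Corollary \ref{cor:nef}, together with the nef cone descriptions), but matching the numerics of Le Potier's theorem to the stated range, and simultaneously reconciling this with the complementary low-degree range $i < (r-1)(g-1)$ handled in Corollary \ref{cor:vanishingforEx} so that the two ranges overlap once $g \ge r+3$, is where the care is required. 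A secondary subtlety is ensuring the Le Potier hypotheses are genuinely met on a \emph{singular} (only $\QQ$-factorial, klt, Fano-type) model rather than a smooth one; here invoking the Fano-type property from Corollary \ref{cor:Fanotype} and the log-terminal version of the vanishing theorem should suffice.
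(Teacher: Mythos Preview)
Your plan takes a detour that the paper avoids, and the detour does not actually land. The paper's proof stays on $\rM(r,L)$ itself: since $\omega_{\rM(r,L)} \cong \Theta^{-2}$, one writes
\[
  \rH^{i}(\rM(r,L), \cE_{x}\otimes\cE_{y}^{*}) \;=\; \rH^{i}\bigl(\rM(r,L),\, \omega_{\rM(r,L)} \otimes \cE_{x} \otimes (\cE_{y}^{*}\otimes\Theta) \otimes \Theta\bigr).
\]
The bundle $\cE_{x}\otimes(\cE_{y}^{*}\otimes\Theta)\otimes\Theta$ is a tensor product of two nef bundles (Theorem \ref{thm:nef} and Corollary \ref{cor:nef}) with the ample line bundle $\Theta$, hence ample of rank $r^{2}$. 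Le Potier vanishing on the \emph{smooth} variety $\rM(r,L)$ then gives $\rH^{i}=0$ for $i \ge r^{2}$ in one stroke; the threshold $r^{2}$ is nothing more than the rank of the bundle.

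Your proposal instead passes to $\PP(\cE_{x})\times_{\rM(r,L)}\PP(\cE_{y}^{*})$ and tries to apply Le Potier there to $\omega\otimes\cO(r+1,r+1)\otimes\Theta^{2}$. But on that space the twist $\cO(r+1,r+1)\otimes\Theta^{2}$ is a \emph{line} bundle, and Le Potier gives no improvement over Kodaira for a line bundle; you are forced to invent an auxiliary ample vector bundle whose ``symmetric/tensor power yields'' this class, which you do not actually specify and which has no natural candidate on the fiber product. The subsequent discussion of flips, Fano-type models, and cohomology transfer is therefore moot: the whole manoeuvre was designed to produce a vanishing input that, in the paper's argument, is available directly and for free on $\rM(r,L)$. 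The single missing idea is to recognise that $\cE_{x}\otimes\cE_{y}^{*}\otimes\Theta^{2}$ is itself an ample rank-$r^{2}$ bundle on a smooth variety, so Le Potier applies without any birational modification.
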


\begin{proof}
Note that
\[
\begin{split}
	\rH^{i}(\rM(r, L), \cE_{x} \otimes \cE_{y}^{*}) &= \rH^{i}(\rM(r, L), \omega_{\rM(r, L)} \otimes \cE_{x} \otimes \cE_{y}^{*} \otimes \Theta^{2})\\
	 &= \rH^{i}(\rM(r, L), \omega_{\rM(r, L)} \otimes \cE_{x} \otimes (\cE_{y}^{*}  \otimes \Theta) \otimes \Theta).
\end{split}
\]
The bundle $\cE_{x} \otimes (\cE_{y}^{*}  \otimes \Theta) \otimes \Theta$ is ample because it is a tensor product of nef and ample bundles (\cite[Theorem 6.2.12. (iv)]{Laz04}). Thus, Le Potier vanishing theorem implies the desired vanishing. 
\end{proof}

\section{ACM bundles on $\rM(r,L)$}\label{sec:ACM}

We now turn to our second application (Theorem \ref{thm:ACMintro}) of the nefness of $\cE_{x}$. 

Let $V$ be an $n$-dimensional projective variety with an ample line bundle $A$. We recall the definition of ACM bundles. 

\begin{definition}\label{def:ACM}
A vector bundle $\cE$ on $V$ is an \emph{ACM bundle} with respect to $A$ if $\rH^i(V,\cE \otimes A^{j})=0$ for every $1 \leq i \leq n-1$ and $j \in \ZZ$. An ACM bundle $\cE$ is \emph{Ulrich} if $\rH^0(V, \cE \otimes A^{-1})=0$ and $\rH^0(V,\cE)=\rk \cE \cdot \deg V = \rk \cE \cdot (A)^{n}$.
\end{definition}

For a smooth Fano variety of Picard rank one, it is straightforward to verify that every line bundle is ACM. It is also clear that if $\cE$ is ACM with respect to $A$, then $\cE \otimes A^{k}$ is ACM with respect to $A$ for all $k \in \ZZ$. But finding a non-trivial example of an ACM bundle is not an easy task for higher dimensional varieties. In this section, we show that $\cE_{x}$ is ACM if $g \ge 3$.

\begin{remark}\label{rmk:veryample}
Many authors assume that $A$ to be very ample when they consider ACM bundles. Because the Picard number of $\rM(r, L)$ is one, Theorem \ref{thm:ACMintro} implies that $\cE_x$ is ACM for every very ample line bundle. On $\rM(r, L)$, $\Theta^{k}$ is known to be very ample when $k \ge r^{2}+r$ (\cite[Theorem A]{EP04}), but an optimal $k$ for the very ampleness is unknown.
\end{remark}

Fix a point $x \in X$ and consider bundle morphisms $\PP (\cE_{x}) \to \rM(r, L)$ and $\PP (\cE_{x}^{*}) \to \rM(r, L)$. From the relative Euler sequence, we have 
\begin{equation}\label{eqn:dualizingsheaves}
	\omega_{\PP(\cE_{x})} \cong \cO(-r) \otimes \Theta^{\ell - 2}, \quad \omega_{\PP (\cE_{x}^{*})} \cong \cO(-r) \otimes \Theta^{-\ell-2}.
\end{equation}

Here we use the notational convention in Section \ref{ssec:cohomologyvanishing}. We set $n := \dim \rM(r, L) = (r^{2}-1)(g-1)$ and assume that $g \ge 2$. We state three vanishing results coming from different sources. 

\begin{lemma}\label{lem:Kodairavanishing}
We have $\rH^i(\rM(r, L),\cE_x \otimes \Theta^{j}) = 0$ for $i \geq 1$, $j \geq \ell-1$.
\end{lemma}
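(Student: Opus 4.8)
The plan is to transport everything to the projective bundle $\pi \colon \PP(\cE_x) \to \rM(r,L)$ and reduce the statement to Kodaira vanishing for an honest ample line bundle upstairs. In the one-dimensional-quotient convention we have $\pi_*\cO_{\PP(\cE_x)}(1) = \cE_x$ and $R^q\pi_*\cO_{\PP(\cE_x)}(1) = 0$ for $q > 0$, since $\cO(1)$ has no higher cohomology on the fibers $\PP^{r-1}$. By the projection formula, $R\pi_*\bigl(\cO(1)\otimes\Theta^{j}\bigr) = \cE_x \otimes \Theta^{j}$ is concentrated in degree $0$, so the Leray spectral sequence degenerates to an isomorphism
\[
	\rH^{i}(\rM(r,L), \cE_x \otimes \Theta^{j}) \cong \rH^{i}\bigl(\PP(\cE_x), \cO(1)\otimes\Theta^{j}\bigr)
\]
for every $i$. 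It therefore suffices to prove the vanishing on $\PP(\cE_x)$, using the notational conventions of Section \ref{ssec:cohomologyvanishing}.

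Next I would rewrite $\cO(1)\otimes\Theta^{j}$ as a twist of the dualizing bundle. By the formula $\omega_{\PP(\cE_x)} \cong \cO(-r)\otimes\Theta^{\ell-2}$ in \eqref{eqn:dualizingsheaves},
\[
	\cO(1)\otimes\Theta^{j} = \omega_{\PP(\cE_x)} \otimes \cO(r+1)\otimes\Theta^{j+2-\ell}.
\]
The crux is to show that the twisting bundle $\cO(r+1)\otimes\Theta^{j+2-\ell}$ is ample precisely when $j \ge \ell-1$. I would split it as
\[
	\cO(r+1)\otimes\Theta^{j+2-\ell} = \bigl(\cO(1)\otimes\Theta\bigr) \otimes \bigl(\cO(r)\otimes\Theta^{j+1-\ell}\bigr).
\]
The first factor is $\cO_{\PP(\cE_x\otimes\Theta)}(1)$, which is ample: $\cE_x$ is nef by Theorem \ref{thm:nef}, so $\cE_x\otimes\Theta$ is an ample vector bundle as the tensor of a nef bundle with an ample line bundle (\cite[Theorem 6.2.12]{Laz04}). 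The second factor lies in the nef cone of $\PP(\cE_x)$, which is generated by $\cO(1)$ and $\pi^{*}\Theta$ (Corollary \ref{cor:nefcone}): its exponents $r \ge 0$ and $j+1-\ell \ge 0$ are nonnegative, the latter exactly under the hypothesis $j \ge \ell-1$. Since the product of an ample and a nef line bundle is ample, $\cO(r+1)\otimes\Theta^{j+2-\ell}$ is ample.

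Finally, Kodaira vanishing on the smooth projective variety $\PP(\cE_x)$ gives $\rH^{i}\bigl(\PP(\cE_x), \omega_{\PP(\cE_x)}\otimes(\text{ample})\bigr) = 0$ for $i \ge 1$, which is exactly the vanishing of $\rH^{i}\bigl(\PP(\cE_x),\cO(1)\otimes\Theta^{j}\bigr)$, and hence of $\rH^{i}(\rM(r,L),\cE_x\otimes\Theta^{j})$, for $j \ge \ell-1$. I do not expect a serious obstacle here, as the positivity input is supplied entirely by Theorem \ref{thm:nef}; the one point requiring care is the boundary value $j = \ell-1$, where the nef factor degenerates to $\cO(r)$. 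Splitting off the ample factor $\cO(1)\otimes\Theta$ \emph{first} is what keeps the total twist ample in that case, so no strict positivity is needed from the nef part. The remaining care is purely bookkeeping of the exponents under the $\PP(\cE_x)$-conventions.
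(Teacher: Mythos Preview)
Your proof is correct and follows essentially the same route as the paper: pass to $\PP(\cE_x)$ via Leray, rewrite $\cO(1)\otimes\Theta^{j}$ as $\omega_{\PP(\cE_x)}\otimes\cO(r+1)\otimes\Theta^{2-\ell+j}$ using \eqref{eqn:dualizingsheaves}, and apply Kodaira vanishing. The only difference is cosmetic: the paper simply cites Corollary~\ref{cor:nefcone} to conclude that $\cO(r+1)\otimes\Theta^{2-\ell+j}$ is ample (both exponents are positive once $j\ge\ell-1$, so the class lies in the interior of the nef cone), whereas you factor it as an ample times a nef line bundle; both arguments are equally short and rely on the same positivity input.
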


\begin{proof}
By \eqref{eqn:dualizingsheaves}, $	\cO(1) \otimes \Theta^{j} \cong \omega_{\PP(\cE_x)} \otimes \cO(r+1) \otimes \Theta^{2-\ell+j}$. By  Kodaira  vanishing theorem and Corollary \ref{cor:nefcone}, we have 
\begin{equation}\label{eqn:Leraysequence}
	\rH^i(\rM(r, L),\cE_x \otimes \Theta^{j}) \cong \rH^i(\PP(\cE_x), \cO(1) \otimes \Theta^{j}) \cong \rH^i(\PP(\cE_x), \omega_{\PP (\cE_x)} \otimes \cO(r+1) \otimes \Theta^{2-\ell+j}) = 0
\end{equation}
for $i \geq 1$, $j \geq \ell-1$. 
\end{proof}

\begin{lemma}\label{lem:LePotiervanishing}
We have $\rH^{i}(\rM(r, L), \cE_{x} \otimes \Theta^{j}) = 0$ for $i \ge r$, $j \ge -1$.
\end{lemma}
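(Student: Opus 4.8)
The plan is to reduce the statement to the Le Potier vanishing theorem by absorbing a canonical twist, exactly as in the proof of Proposition \ref{prop:highervanishing}, but now keeping careful track of the \emph{rank} of the relevant ample bundle so that the vanishing range begins at $i \ge r$ rather than at $i \ge r^{2}$.

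First I would record that $\omega_{\rM(r, L)} = \Theta^{-2}$. This follows from \eqref{eqn:dualizingsheaves} together with the relative Euler sequence for $\pi : \PP(\cE_{x}) \to \rM(r, L)$: since $\det \cE_{x} = \Theta^{\ell}$, the relative canonical bundle is $\omega_{\PP(\cE_{x})/\rM(r,L)} = \cO(-r) \otimes \Theta^{\ell}$, and comparing $\omega_{\PP(\cE_{x})} = \omega_{\PP(\cE_{x})/\rM(r,L)} \otimes \pi^{*}\omega_{\rM(r,L)}$ with the value $\cO(-r) \otimes \Theta^{\ell-2}$ from \eqref{eqn:dualizingsheaves} forces $\omega_{\rM(r,L)} = \Theta^{-2}$. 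Using this, I rewrite
\[
	\rH^{i}(\rM(r, L), \cE_{x} \otimes \Theta^{j}) = \rH^{i}\!\left(\rM(r, L), \omega_{\rM(r,L)} \otimes \cE_{x} \otimes \Theta^{j+2}\right).
\]

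Next I would check that $\cE_{x} \otimes \Theta^{j+2}$ is ample for every $j \ge -1$. The bundle $\cE_{x}$ is nef by Theorem \ref{thm:nef}, while $\Theta^{j+2}$ is ample precisely when $j+2 \ge 1$, i.e. $j \ge -1$; and a nef bundle tensored with an ample line bundle is ample by \cite[Theorem 6.2.12 (iv)]{Laz04}. Since $\cE_{x} \otimes \Theta^{j+2}$ has rank $r$, the Le Potier vanishing theorem (\cite[Theorem 7.3.5]{Laz04}) yields $\rH^{i}(\rM(r,L), \omega_{\rM(r,L)} \otimes \cE_{x} \otimes \Theta^{j+2}) = 0$ for all $i \ge r$, which is exactly the claimed range.

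I expect essentially no obstacle beyond the inputs already in hand: the only substantive ingredient is the nefness of $\cE_{x}$ (Theorem \ref{thm:nef}), and the rank-$r$ bookkeeping that produces the sharper threshold $i \ge r$ (in contrast to the rank-$r^{2}$ bundle $\cE_{x} \otimes (\cE_{y}^{*} \otimes \Theta)$ appearing in Proposition \ref{prop:highervanishing}, which only gave $i \ge r^{2}$). The one point worth stating explicitly is the identity $\omega_{\rM(r,L)} = \Theta^{-2}$, which I would either cite or derive in a single line as above.
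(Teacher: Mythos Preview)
Your proposal is correct and follows essentially the same route as the paper: rewrite $\cE_{x}\otimes\Theta^{j}$ as $\omega_{\rM(r,L)}\otimes\cE_{x}\otimes\Theta^{j+2}$ using $\omega_{\rM(r,L)}=\Theta^{-2}$, observe that $\cE_{x}\otimes\Theta^{j+2}$ is ample for $j\ge -1$ (nef $\cE_{x}$ twisted by an ample line bundle, \cite[Theorem 6.2.12 (iv)]{Laz04}), and apply Le Potier vanishing to the rank-$r$ bundle. The paper's proof is the same two-line argument, only it phrases the ampleness as ``$\cE_{x}\otimes\Theta$ is ample'' and uses $\omega_{\rM(r,L)}=\Theta^{-2}$ without comment.
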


\begin{proof}
Since $\cE_{x} \otimes \Theta$ is ample (\cite[Theorem 6.2.12. (iv)]{Laz04}), Le Potier vanishing theorem (\cite[Theorem 7.3.5]{Laz04}) immediately implies that 
\[
	\rH^{i}(\rM(r, L), \cE_{x} \otimes \Theta^{j}) = \rH^{i}(\rM(r, L), \omega_{\rM(r, L)} \otimes \cE_{x} \otimes \Theta^{j+2}) = 0
\]
for $i \ge r$ and $j \ge -1$. 
\end{proof}

\begin{lemma}\label{lem:wallcrossing}
We have $\rH^{i}(\rM(r, L), \cE_{x} \otimes \Theta^{j}) = 0$ for $1 \le i \le (r-1)(g-1) -1$, $j > -1+(1-\ell)/r$.
\end{lemma}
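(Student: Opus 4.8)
The plan is to mimic the strategy used to prove the earlier vanishing results (Lemmas~\ref{lem:Kodairavanishing} and~\ref{lem:LePotiervanishing}), but with the extra range of $i$ obtained from the flip structure of the parabolic moduli space, exactly as in Corollary~\ref{cor:vanishing}. First I would reduce the cohomology of $\cE_x \otimes \Theta^j$ on $\rM(r,L)$ to the cohomology of a line bundle on $\PP(\cE_x) \cong \rM(r,L,r-1,\epsilon)$. Since $\pi : \PP(\cE_x) \to \rM(r,L)$ is a projective bundle with $\pi_* \cO(1) = \cE_x$ and higher direct images vanishing, the Leray spectral sequence gives
\[
	\rH^i(\rM(r,L), \cE_x \otimes \Theta^j) \cong \rH^i(\PP(\cE_x), \cO(1) \otimes \Theta^j).
\]
This is the same identification used in~\eqref{eqn:Leraysequence}.

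Next I would relate the line bundle $\cO(1) \otimes \Theta^j$ to a nef and big line bundle twisted by the dualizing sheaf, so that Kawamata--Viehweg vanishing applies on a \emph{different} birational model $\rM(r,L,r-1,\ba')$ for a suitably chosen general weight $\ba'$. Using $\omega_{\PP(\cE_x)} \cong \cO(-r) \otimes \Theta^{\ell-2}$ from~\eqref{eqn:dualizingsheaves}, one writes $\cO(1) \otimes \Theta^j = \omega_{\PP(\cE_x)} \otimes (\cO(r+1) \otimes \Theta^{j-\ell+2})$, so the task becomes verifying that the twisting bundle $\cO(r+1) \otimes \Theta^{j-\ell+2}$ is nef and big on some model. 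By Corollary~\ref{cor:nefcone} the nef cone of $\PP(\cE_x)$ is spanned by $\pi^*\Theta$ and $\cO(1)$, and since increasing the weight pushes us through flips toward $\rM(r,L,r-1,1-\epsilon)$ whose ample generator is controlled by Lemma~\ref{lem:pullbackofample}, I can identify the precise half-plane of $(1,j)$-classes that land in the big and nef cone of one of these models. The numerical condition $j > -1 + (1-\ell)/r$ is exactly what makes $\cO(r+1) \otimes \Theta^{j-\ell+2}$ (equivalently, the relevant class pulled to the Hecke-type model) lie in the interior of the movable/nef cone, using the relation $\Theta_{\rM(r,L(-x))}^k = \cO(r) \otimes \Theta^{1-\ell}$.

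Finally, to transfer the vanishing back to the small-weight model $\PP(\cE_x) \cong \rM(r,L,r-1,\epsilon)$, I would invoke the flip-invariance of cohomology in low degrees. By Corollary~\ref{cor:codimcenter} all wall-crossings between general weights are flips whose centers have codimension at least $(r-1)(g-1)+1$, so cohomology in degrees $i < (r-1)(g-1)$ is preserved under these birational modifications (\cite[III. Lemma 3.1]{Gro05}, \cite[Theorem 3.8]{Har67}), precisely as invoked in Corollary~\ref{cor:vanishing}. Combining the Kawamata--Viehweg vanishing on the chosen model $\rM(r,L,r-1,\ba')$ with this codimension bound yields $\rH^i(\PP(\cE_x), \cO(1)\otimes \Theta^j) = 0$ for $1 \le i \le (r-1)(g-1)-1$, which transports back to the desired vanishing on $\rM(r,L)$.

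The main obstacle I expect is pinning down the exact numerical threshold $j > -1 + (1-\ell)/r$: one must check that the class $\cO(1) \otimes \Theta^j$, once rewritten as $\omega$ times a twist and interpreted on the flipped model, genuinely lands in the interior of the nef cone of some general-weight model rather than merely on its boundary. This requires the careful bookkeeping of the identifications $\cO(r) \otimes \Theta^{1-\ell} = \Theta_{\rM(r,L(-x))}^k$ from Lemma~\ref{lem:pullbackofample} and the slope of the half-plane of effective divisors from Proposition~\ref{prop:effectivecone} (specialized to the one-point case), and getting the fractional bound $(1-\ell)/r$ correct is where the real content lies; the vanishing and flip-invariance steps are then routine applications of the cited theorems.
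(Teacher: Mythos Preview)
Your proposal is correct and follows essentially the same approach as the paper: reduce to $\rH^i(\PP(\cE_x),\omega\otimes\cO(r+1)\otimes\Theta^{2-\ell+j})$ via Leray, observe that the effective cone of $\rM(r,L,r-1,a)$ is generated by $\Theta$ and $\cO(r)\otimes\Theta^{1-\ell}$ (Proposition~\ref{prop:effectivecone} specialized to $a_y=0$), so $\cO(r+1)\otimes\Theta^{2-\ell+j}$ is big precisely when $(2-\ell+j)/(r+1)>(1-\ell)/r$, i.e.\ $j>-1+(1-\ell)/r$; then apply Kawamata--Viehweg on a model where this class is also nef and transfer back via the codimension bound from Corollary~\ref{cor:codimcenter}. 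The ``main obstacle'' you flag is exactly the one-line numerical check the paper carries out, so nothing further is needed.
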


\begin{proof}
By \eqref{eqn:Leraysequence}, it is sufficient to show that $\rH^{i}(\PP(\cE_{x}), \omega \otimes \cO(r+1) \otimes \Theta^{2-\ell +j}) = 0$. Since $\PP(\cE_{x})$ and $\rM(r, L, r-1, a)$ for a general parabolic weight $a$ is connected by finitely many flips with wall-crossing centers of codimension $\ge (r-1)(g-1) + 1$, for $0 < i < (r-1)(g-1)$, it is sufficient to show the vanishing for some $a$ (\cite[III. Lemma 3.1]{Gro05}, \cite[Theorem 3.8]{Har67}). Proposition \ref{prop:effectivecone} implies that the effective cone of $\PP(\cE_{x}) = \rM(r, L, r-1, \epsilon)$, which is identified to the boundary of $\mathrm{Eff}(\rM(r, L, \bm, (\epsilon, \epsilon)))$ given by $a_{y} = 0$, is generated by $\Theta$ and $\cO(r) \otimes \Theta^{1-\ell}$. Thus, for a bundle $F = \cO(a) \otimes \Theta^{b}$, if $a > 0$ and $b/a > (1-\ell)/r$, then $F$ is big. Thus, for some general parabolic weight $a$, by Kawamata-Viehweg vanishing theorem, 
\[
	\rH^{i}(\rM(r, L, r-1, a), \omega \otimes \cO(r+1) \otimes \Theta^{2-\ell+j}) = 0
\]
for $i \ge 1$ and $(2-\ell +j)/(r+1) > (1-\ell)/r$, or equivalently, $j > -1 + (1-\ell)/r$. 
\end{proof}

\begin{proof}[Proof of Theorem \ref{thm:ACMintro}]
We divide the computation into several steps.  

\textsf{Step 1.} It is sufficient to show that $\rH^{i}(\rM(r, L), \cE_{x} \otimes \Theta^{j}) = 0$ for $1 \le i \le n-1$ and $j \ge -1$. 

By Serre duality, $\rH^{i}(\rM(r, L), \cE_{x} \otimes \Theta^{j}) \cong \rH^{n-i}(\rM(r, L), \cE_{x}^{*} \otimes \Theta \otimes \Theta^{-j-3})$. Since $\cE_{x}^{*}\otimes \Theta$ is the restriction of the normalized Poincar\'e bundle over $\rM(r, L^{*}(r)) \cong \rM(r, L)$, the vanishing of $\cE_{x} \otimes \Theta^{j}$ for $j \le -2$ follows from the vanishing of $\cE_{x}^{*}\otimes \Theta \otimes \Theta^{j}$ for $j \ge -1$.

\textsf{Step 2.} $\ell \ne 1$. 

It is straightforward to check that, if $g \ge 3$, then the vanishing results in Lemmas \ref{lem:Kodairavanishing}, \ref{lem:LePotiervanishing}, and \ref{lem:wallcrossing} imply that $\cE_{x}$ is ACM. 

\textsf{Step 3.} $\ell = 1$.

The above three lemmas cover all cohomology groups except $1 \le i \le r-1$, $j = -1$. For the $\ell = 1$ case, there is a contraction map $\pi_{1} : \PP(\cE_{x}) = \rM(r, L, r-1, \epsilon) \to \rM(r, L(-x))$ (Remark \ref{rmk:d=1}). Then by \cite[Lemma 13]{BM19}, 
\[
\begin{split}
	\rH^{i}(\rM(r, L), \cE_{x} \otimes \Theta^{-1}) &\cong \rH^{i}(\PP(\cE_{x}), \cO(1) \otimes \Theta^{-1}) = \rH^{i}(\PP(\cE_{x}), \omega_{\PP(\cE_{x})} \otimes \cO(r+1))\\
	&= \rH^{i}(\PP(\cE_{x}), \omega_{\PP(\cE_{x})} \otimes \pi_{1}^{*}\Theta_{\rM(r, L(-x))}^{r+1}).
\end{split} 
\]

By Koll\'ar's vanishing theorem (\cite[Theorem 2.1]{Kol86}), $R^{i}\pi_{1 *}\omega_{\PP(\cE_{x})}$ is torsion free for all $i$ and 
\[
	\rH^{k}(\rM(r, L(-x)), R^{i}\pi_{1 *}\omega_{\PP(\cE_{x})} \otimes \Theta_{\rM(r, L(-x))}^{r+1}) = 0
\]
for all $k > 0$. Since the Leray spectral sequence degenerates, $\rH^{0}(\rM(r, L(-x)), R^{i}\pi_{1 *}\omega_{\PP(\cE_{x})} \otimes \Theta_{\rM(r, L(-x))}^{r+1}) \cong \rH^{i}(\PP(\cE_{x}), \omega_{\PP(\cE_{x})} \otimes \pi_{1}^{*}\Theta_{\rM(r, L(-x))}^{r+1})$. On the other hand, over the stable locus $\rM(r, L(-x))^{s}$, $\pi_{1}$ is a $\PP^{r-1}$-fibration. Checking a general fiber, we can show that $R^{i}\pi_{1 *}\omega_{\PP(\cE_{x})} = 0$ for $i \ne r - 1$. Thus, we obtain the desired vanishing for $1 \le i \le r-2$. 

For $i = r-1$, since $R^{r-1} \pi_{1 *}\omega_{\PP(\cE_{x})}$ is a torsion free sheaf, we have an injective morphism $R^{r-1} \pi_{1 *}\omega_{\PP(\cE_{x})} \hookrightarrow (R^{r-1} \pi_{1 *}\omega_{\PP(\cE_{x})})^{\vee\vee}$. These two are isomorphic to $\omega_{\rM(r, L(-x))}$ over an open subset of codimension $\ge 2$ (\cite[Exercise III.8.4]{Har77}) and the latter is reflexive. Since $\rM(r, L(-x))$ is locally factorial (\cite[Theorem A]{DN89}), $(R^{r-1} \pi_{1 *}\omega_{\PP(\cE_{x})})^{\vee\vee} \cong \omega_{\rM(r, L(-x))} \cong \Theta_{\rM(r, L(-x))}^{-2r}$ (\cite[Theorem F]{DN89}). Now we have 
\[
\begin{split}
	\rH^{0}(\rM(r, L(-x)), R^{r-1}\pi_{1 *}\omega_{\PP(\cE_{x})} \otimes \Theta_{\rM(r, L(-x))}^{r+1}) &\hookrightarrow \rH^{0}(\rM(r, L(-x)), \omega_{\rM(r, L(-x))} \otimes \Theta_{\rM(r, L(-x))}^{r+1})\\
	&= \rH^{0}(\rM(r, L(-x)), \Theta_{\rM(r, L(-x))}^{-r+1}) = 0.
\end{split}
\]
\end{proof}

When $g = 2$, the only cohomologies that are not covered by the above vanishing results are $i = r-1$ and $0 \le j \le r-3$. Thus, the above proof provides the following statement. 

\begin{corollary}\label{cor:ACMthetak}
If $g \ge 2$, $\cE_{x} \otimes \Theta^{-1}$ is ACM with respect to $\Theta^{k}$ for $k \ge r-1$.
\end{corollary}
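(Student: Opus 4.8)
The plan is to read the statement off the cohomology computations already assembled for Theorem~\ref{thm:ACMintro}, so that the corollary reduces to a purely numerical observation about which powers of $\Theta$ can occur as twists. Being ACM with respect to $\Theta^k$ means precisely that $\rH^i\big(\rM(r,L),(\cE_x \otimes \Theta^{-1}) \otimes \Theta^{kj}\big) = \rH^i(\rM(r,L), \cE_x \otimes \Theta^{kj-1})$ vanishes for every $1 \le i \le n-1$ and every $j \in \ZZ$, where $n = (r^2-1)(g-1)$. The idea is that, by the earlier analysis, these groups are known to vanish for \emph{all} exponents except those in one short interval, and for $k \ge r-1$ the arithmetic progression of admissible exponents steps over that interval.

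First I would treat $g \ge 3$ separately. There Theorem~\ref{thm:ACMintro} already shows that $\cE_x$ is ACM with respect to $\Theta$, i.e.\ $\rH^i(\rM(r,L),\cE_x \otimes \Theta^m)=0$ for all $m \in \ZZ$ and $1 \le i \le n-1$; twisting by $\Theta^{-1}$ preserves this, and since $\{\Theta^{kj}\}_{j}$ is a subset of $\{\Theta^m\}_m$, the bundle $\cE_x \otimes \Theta^{-1}$ is automatically ACM with respect to $\Theta^k$ for every $k$. Hence only $g = 2$ carries content.

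For $g = 2$ I would invoke the outcome of Lemmas~\ref{lem:Kodairavanishing}, \ref{lem:LePotiervanishing}, \ref{lem:wallcrossing} together with the Serre-duality reduction (Step~1 of the proof of Theorem~\ref{thm:ACMintro}), namely that $\rH^i(\rM(r,L), \cE_x \otimes \Theta^m)=0$ for every $1 \le i \le n-1$ and $m \in \ZZ$ outside the exceptional window $i=r-1$, $0 \le m \le r-3$. The corollary is then the observation that the exponents $m = kj-1$ never meet $[0,r-3]$ once $k \ge r-1$: indeed $m \in [0,r-3]$ forces $kj \in [1,r-2]$, while $kj \ge k \ge r-1 > r-2$ for $j \ge 1$ and $kj \le 0 < 1$ for $j \le 0$. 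Thus every relevant twist $\cE_x \otimes \Theta^{kj-1}$ is acyclic in intermediate degrees, which is exactly the ACM condition for $\cE_x \otimes \Theta^{-1}$ with respect to $\Theta^k$.

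The step that genuinely deserves attention is the precise shape of the exceptional window rather than the final arithmetic. What makes the argument succeed is that the lower endpoint is $0$ and not $-1$: the untwisted bundle $\cE_x \otimes \Theta^{-1}$ corresponds to $j=0$, $m=-1$, so if $\rH^{r-1}(\rM(r,L),\cE_x \otimes \Theta^{-1})$ were nonzero the conclusion would collapse immediately. I would therefore pin down this endpoint, and correspondingly track the Serre-dual window in degree $n-(r-1)=r(r-1)$, since the only negative exponent the progression reaches inside it is $m=-r$ (at $k=r-1$, $j=-1$); I would confirm the vanishing of $\rH^{r(r-1)}(\rM(r,L),\cE_x \otimes \Theta^{-r})$ by passing through Serre duality to the dual Poincar\'e bundle on $\rM(r,L^*(r)) \cong \rM(r,L)$ and applying Kodaira vanishing (Lemma~\ref{lem:Kodairavanishing}). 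Finally, the hypothesis $k \ge r-1$ is sharp for this method: taking $k=r-2$ the value $j=1$ gives $m=r-3$, which lands on the upper edge of the window, so the bound cannot be relaxed by this argument.
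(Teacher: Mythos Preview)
Your approach is essentially the paper's own: the paper's entire proof is the one-sentence observation that for $g=2$ the only cohomologies not covered by the preceding vanishing results are $\rH^{r-1}(\rM(r,L),\cE_x\otimes\Theta^{j})$ for $0 \le j \le r-3$, so the twists $\Theta^{kj-1}$ with $k \ge r-1$ skip over that window exactly as you compute. Your added caution about pinning down the endpoint $m=-1$ and the Serre-dual window at degree $r(r-1)$ goes beyond what the paper spells out, but the core reduction and arithmetic are identical.
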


\begin{remark}\label{rmk:g=2}
\begin{enumerate}
\item The vanishing result in \text{Step 3} is proved in \cite[Proposition 19]{BM19} with a different method, under the assumption of $g \ge 3$. Our approach is valid for $g = 2$ as well.
\item When $g = r = 2$, $\rM(r, L)$ is an intersection of two quadrics and $\cE_{x}$ is a spinor bundle (\cite{CKL19, FK18}). From this description, it was shown that $\cE_{x}$ is ACM for all $x \in X$.
\end{enumerate}
\end{remark}

\begin{question}\label{que:g=2}
Can we extend Theorem \ref{thm:ACMintro} to the $g = 2$ case?
\end{question}

\begin{remark}\label{rmk:Ulrich}
The bundle $\cE_{x}$ is not Ulrich in general. For instance, if $g = r = 2$, $h^0(\rM(r, L),\cE_x)= 4 < 8 = 2 \deg(\rM(r, L))$. It is an interesting problem to construct Ulrich bundles on $\rM(r,L)$. See \cite{CKL19} for an alternative construction of Ulrich bundles for $g=r=2$ case.
\end{remark}


\bibliographystyle{alpha}

\end{document}